\newtheorem{theorem}{Theorem}[section]
\newtheorem{lemma}[theorem]{Lemma}
\newtheorem{notation}[theorem]{Notation}
\newtheorem{remark}[theorem]{Remark}
\newtheorem{proposition}[theorem]{Proposition}
\newtheorem{example}{Example\/}
\renewcommand{\Re}{\mathrm{Re}\,}
\renewcommand{\Im}{\mathrm{Im}\,}
\newtheorem{assumpA}{Assumption}
\def\beq{\begin{eqnarray*}}\def\eeq{\end{eqnarray*}}
\def\bq{\begin{equation}}\def\eq{\end{equation}}
\newcommand{\N}{\mathbb{N}}
\newcommand{\Z}{\mathbb{Z}}
\newcommand{\R}{\mathbb{R}}
\newcommand{\C}{\mathbb{C}}
\newcommand{\eps}{\varepsilon}
\newcommand{\dd}{\mathrm{d}}
\newcommand{\Dom}{\mathcal{D}}
\newcommand{\lV}{\left\Vert}
\newcommand{\rV}{\right\Vert}
\begin{document}
\title{Pseudomodes for biharmonic operators with complex potentials}
\author{Tho Nguyen Duc}
\address[Tho Nguyen Duc]{Department of Mathematics, Faculty of Nuclear Sciences and Physical Engineering,
Czech Technical University in Prague, Trojanova 13, 12000 Prague 2, Czech Republic}
\email{nguyed16@fjfi.cvut.cz}
\begin{abstract}
This article is devoted to the construction of pseudomodes of one-dimensional 
biharmonic operators with the complex-valued potentials via the WKB method. As a by-product, the shape of pseudospectrum near infinity can be described. This is a newly discovered systematic method that goes beyond the standard semi-classical setting which is a direct consequence. This approach can cover a wide class of previously inaccessible potentials, from logarithmic to superexponential ones. 
\end{abstract}
\maketitle
\section{Introduction}
\subsection{Context and motivations}
In the well-known self-adjoint theory, the norm of the resolvent associated with a self-adjoint operator is very large if and only if the spectral parameter comes close to the spectrum. The picture is very different for the non-self-adjoint operator: the resolvent may blow up  when the spectral parameter is far from the spectrum. It leads to the instability of the spectrum under a small perturbation and reveals that, in this case, the numerical methods will fail to compute the eigenvalues. In order to describe this pathological property of the non-self-adjoint operators, the notion 
of pseudospectra was regarded
\cite{LM05,Davies_2007,Helffer}. That is, given a positive number~$\eps$,
the $\eps$-\emph{pseudospectrum} $\sigma_\eps(H)$ 
of an operator~$H$ in a complex Hilbert space
is defined as its spectrum $\sigma(H)$ along with 
those resolvent points whose norm of the resolvent larger than $\varepsilon^{-1}$. This definition is equivalent to the description of the set $\sigma_\eps(H)$ as the spectrum $\sigma(H)$ enlarged by the complex points $\lambda$ (called \emph{pseudoeigenvalues}) for which 
there exists a vector $\Psi \in \Dom(H)$ such that
\begin{equation}\label{Pseudo Equation}
\Vert (H-\lambda)\Psi \Vert 
< \varepsilon \, \Vert \Psi\Vert\,.
\end{equation}
Any $\Psi$ satisfies \eqref{Pseudo Equation} is called a \emph{pseudomode} (or its other names: \emph{pseudoeigenfunction}, \emph{pseudoeigenvector}, \emph{quasimode}). 

The analysis of pseudospectra of the non-self-adjoint Schr\"{o}dinger operators has given rise to many investigations in twenty years \cite{Da99,Hitrik-Sjostrand-Viola_2013,
Henry,Henry_2014,Novak_2015,KSTV15,HK17,KS19,CK20}. By change of scale to transform the original Schr\"{o}dinger operator to its semi-classical form, which has the small parameter $h^2$ in front of the second derivative, Davies' pioneering work \cite{Da99} has constructed the pseudomode for the semi-classical Schr\"{o}dinger operator as $h\to 0$, then the pseudomode of the original operator with large $\lambda$ is achieved (\emph{i.e.}\ those corresponding in~\eqref{Pseudo Equation} to $|\lambda|\to+\infty$ with $\eps_\lambda \to 0$). However, this method seems merely effective for a certain class of polynomial potentials in which the scaling is able to be performed, while it is inapplicable, for example, for logarithmic or exponential potentials. Moreover, the semi-classical approach is even more inaccessible to   the class of discontinuous potentials since the semi-classical construction of pseudomode requires that the potential is smooth (or at least continuous). In \cite{HK17}, Rapha\"{e}l and Krej\v{c}i\v{r}\'{i}k had studied the imaginary sign potential by constructing the resolvent kernel of the Schr\"{o}dinger operator. Up to twenty years after the earlier Davies' work~\cite{Da99},  Krej\v{c}i\v{r}\'{i}k and Siegl in~\cite{KS19} developed 
a \emph{direct construction of large-energy pseudomodes} for Schr\"{o}dinger operator, which does not require the passage through semi-classical setting and can cover all mentioned above potentials. Recently, this technique is applicable to other models such as the damped wave equation~\cite{AS20} and  Dirac operator~\cite{KN20}. The purpose of the present paper is to extend the method developed in~\cite{KS19,KN20} to higher differential operator by considering biharmomic instead of Schr\"odinger operators and to discover the more universal shape of the potentials such that this method works. 

More precisely, this document is devoted to construct a $\lambda$-dependent family of pseudomodes $f_{\lambda}$ such that
\begin{equation}\label{Eq Intro Main Prob}
\Vert \left(\mathscr{L}_{V}-\lambda\right) f_{\lambda} \Vert  = o(1) \Vert  f_{\lambda} \Vert,  \qquad \text{ as } \lambda \to \infty \text{ in } \Omega \subset \C.
\end{equation}
Here $\mathscr{L}_{V}$ is the one-dimensional biharmonic operator which is defined as a forth derivative perturbed by a complex-valued potential $V$: 
\[ \mathscr{L}_{V}= \frac{\dd^4}{\dd x^4}+V(x). \]
In \cite{Boulton02}, when the author work with the non-self-adjoint harmonic oscillator, it has shown us that we do not always obtain the decay in \eqref{Eq Intro Main Prob} by just letting $\lambda \to \infty$ in $\C$. Indeed, he proves in \cite{Boulton02} that the norm of the resolvent is bounded above when $\lambda \to \infty$ along some half-lines parallel to the positive semi-axis $\R^{+}$ and blows up when $\lambda \to \infty$ in a region bounded by two certain curves. The set $\Omega$ is a region in the neighborhood of infinity in $\C$ which contains large $\lambda$ allowing the decay in~\eqref{Eq Intro Main Prob} to happen and thus the norm of the resolvent will go up in this region. 

There are three main theorems in this paper. The first theorem will provide the answer to the question: What is the behaviour of $V$ such that we obtain the decay in \eqref{Eq Intro Main Prob} when $\lambda \to \infty$ in the region parallel to the positive semi-axis? We will see that the behaviour at infinity of the imaginary part of $V$, denoted by $\Im V$, plays the decisive role in this mission, that is
\[ \left(\lim_{x \to -\infty} \Im V(x)\right) \left(\lim_{x \to +\infty} \Im V(x)\right)<0.\]
This condition is essential to ensure the \enquote{significantly non-normality} of $H_{V}$. Theorem \ref{Theorem 1} also generalizes the results in \cite[Thm. 3.7]{KS19} and \cite[Thm. 3.10,\ Thm. 3.11]{KN20} in which only $\lambda \to +\infty$ on the positive semi-axis is considered. Furthermore, the class of the potential $V$ is also widened in our paper by controlling the derivative of $V$ by general functions $\tau_{\pm}$ (see Cond. \eqref{Assump G Der}) instead of some polynomial functions in \cite[Cond. (3.2)]{KS19}. This allows us to cover the super-exponential function (see Example \ref{Example Superexp}) that could not be covered in \cite{KS19}.

Theorem \ref{Theorem 2} addresses the question: What is the shape of $\Omega$ for each type of the potential~$V$? Once again, our assumption can cover a larger class of potentials than~\cite[Asm. III]{KS19} and \cite[Asm. II]{KN20}. By applying it for the functions from growing slowly at $+\infty$ such as logarithmic one $V(x)=i \ln(x)$ or root-type $V(x)=ix^{\gamma}$ with $\gamma\in(0,1)$ to growing faster at $+\infty$ such as polynomial $V(x)=ix^{\gamma}$ with $\gamma\geq 1$ and super-exponential functions $V(x)=ie^{e^{x}}$, the region $\Omega$ for each type of them is described differently (see Subsection \ref{Subsection App Theorem 2} in which many picture for illustrations are added). Finally, Theorem \ref{Theorem 3} shows us that the semi-classical setting is actually a special consequence of our pseudomode construction. In all of the  theorems mentioned above, the regularity of the potentials are assumed as mild as possible, which is a small plus point compared with the previous semi-classical setting that the smoothness of $V$ always assumed highly. 

The technique that we employed to find the pseudomodes is the (J)WKB method (also known as the Liouville--Green approximation). The WKB method is not only seen as a tool to approximate the eigenfunction for some differential operators, but it also reveals some information of the eigenvalues. For instance, in \cite{YN20,BNRVN20}, the asymptotic expansions for the eigenvalues of the self-adjoint magnetic Laplacian were found in the process of doing the WKB analysis. Now we could ask a similar question for the non-self-adjoint operator
\begin{center}
\enquote{Can we describe $\Omega$ (pseudo-spectrum near the infinity) by the WKB method?}
\end{center}

Continuing the work of \cite{KS19,KN20}, we would like to provide a positive answer by considering the higher differential operator with more general potentials. We refer~\cite[Remark~2.3]{KN20} to explain why our approach goes beyond the standard semi-classical settings. It is our belief that the study in this article is a necessary step allowing us to approach in the future with more general differential operators such as $\frac{\dd^{n}}{\dd x^n}+V(x)$ for arbitrary $n\in \N_{1}$ and a complex electric potential $V$. The extension of this analysis to the magnetic Laplacian with a complex magnetic field (recently motivated in \cite{David19}) also constitutes a challenging open problem.

Biharmonic operator has its own application in continuum mechanics and in linear elasticity theory. In recent years, the biharmonic operator attracted considerable attention in particular in the context of spectral theory. For instance, in \cite{Enblom16}, Enblom study the bound of the eigenvalues for the non-self-adjoint polyharmonic operator in which the biharmonic operator is a special case. The sharp confinement of the eigenvalues in a closed disk of the biharmonic operators has been produced in \cite{IKL21} for low dimension. We also list here some recent studies related to the spectral properties of the biharmonic operators \cite{AFL17,Hulko18,BL19,FL21}.
\subsection{Handy notations and conventions}\label{Handy notation}
Here we summarise some special notations and conventions which we use regularly in the paper: 
\begin{enumerate}[label=\arabic*)]
\item $\N_{k}$, with a non-negative integer $k$,  
is the set of integers starting from $k$;
\item For the semi real axes, we denote $\R_{0}^{-}\coloneqq (-\infty,0]$ and $\R_{0}^{+}\coloneqq [0,\infty)$, and for strictly positive or negative axes, we denote $\R^{-}\coloneqq(-\infty,0)$ and $\R^{+}\coloneqq(0,+\infty)$;
\item For the list of integer numbers from $m$ to $n$, where $m,n\in \Z$ and $m<n$, we denote~$[[m,n]]$, \emph{i.e.} $[[m,n]]:=\left\{ k\in \Z: m\leq k\leq n\right\}$ ;
\item $f^n$ and $f^{(n)}$ denotes respectively
the power $n$ and the $n$-th derivative of a function $f:\R \to \C$ with $n\in \N_{0}$;
\item 
We use the same symbol $\Vert \cdot \Vert:=\lV \cdot \rV_{L^{2}(\R)}$
for $L^2$-norms of complex-valued functions defined on $\R$;
\item We often write $\lambda=\alpha+i\beta$ where $\alpha,\beta\in \R$ and denote $V_{\lambda}\coloneqq\lambda-V$;
\item For two real-valued functions $a$ and $b$, we write
$a\lesssim b$ (respectively, $a \gtrsim b$)  
if there exists a constant $C>0$, independent of $\lambda$ and $x$ (or any other relevant parameter such as $\alpha$ and $\beta$), such that
$a\leq C b$  (respectively, $a\geq Cb$); and we write $a \approx b$ if $a\lesssim b$ and $a \gtrsim b$.
\end{enumerate}
\subsection{Structure of the paper}
The rest of this article is organized as follows: In Section \ref{Section Statement}, we establish the main conditions for the admissible class of the potentials $V$ and the main theorems related to the central problem \eqref{Eq Intro Main Prob} come shortly after. Many nontrivial and illuminating examples are contained in this section. We reserve Section \ref{Section WKB} to describe the WKB method for the biharmonic operators, which is the main tool to construct the pseudomodes. Section \ref{Section Proof of theorem 1} is devoted to the large real pseudoeigenvalues in the region parallel to the positive semi-axis $\R^{+}$ while the pseudoeigenvalues corresponding to the large imaginary part are dealt with in Section \ref{Section Proof Theorem 23}. The method in Section \ref{Section Proof Theorem 23} is also used to prove the semi-classical result.
\section{Statements, results and applications}\label{Section Statement}
\subsection{Statements and results}
In this article, we consider the maximal forth-order differential operator perturbed by a multiplication operator, where we denote by the same symbol $V$, as follows
\begin{align*}
&\mathrm{Dom}(\mathscr{L}_{V}) = \left\{u \in L^2(\R):  \left(\frac{\dd^4}{\dd x^4}+V(x) \right)u \in  L^2(\R)\right\},\\
&\mathscr{L}_{V}= \frac{\dd^4}{\dd x^4}+V(x),
\end{align*}
where $V:\R \to \C$ is assumed to be locally $L^2$-integrable, \emph{i.e.} $V\in L^{2}_{\mathrm{loc}}(\R)$. This condition ensures that all the action of $\mathscr{L}_{V}$ is well-defined in the sense of distributions. It follows that $\mathscr{L}_{V}$ is a closed operator. However, the closedness of $\mathscr{L}_{V}$ is inessential for our construction of pseudomode. Since the pseudomode constructed in this document has a compact support, our method can be modified to adapt with any closed extension of the operator initially defined on $\mathcal{C}_{0}^{\infty}(\R)$.

Since $\lambda=\alpha+i \beta$, there are two obvious ways to make $\lambda$ become largely, that is increasing $\alpha$ or increasing $\beta$. Therefore, depending on the decisive role of the real part or imaginary part of the pseudoeigenvalue in the decaying estimation of \eqref{Eq Intro Main Prob}, the pseudomodes are accordingly constructed in different ways. 
\subsubsection{Large real pseudoeigenvalues}
Let us state the main assumptions on the admissible class of the potential~$V$. 
\begin{assumpA}\label{Assumption 1}
Let $N\in \N_{0}$, assume that $V\in W^{N+3,\infty}_{\mathrm{loc}}(\R)$ satisfy the following conditions:
\begin{enumerate}[label=\textbf{\arabic*)}]
\item   $\Im V$ has a different asymptotic behaviour at $\pm \infty$:
\begin{equation}\label{Assump G Im V 1}
\limsup_{x\to-\infty} \Im V(x)<0 <\liminf_{x\to +\infty} \Im V(x);
\end{equation} 
\item  There exist continuous functions $\tau_{\pm}:\R_{0}^{\pm} \to \R^{+}$ such that, for all $n \in [[1,N+3]]$,
\begin{equation}\label{Assump G Der}
\begin{aligned}
\left\vert V^{(n)}(x) \right\vert =\mathcal{O}\left( \tau_{\pm}(x)^{n}\left\vert V(x)\right\vert\right) , \qquad x\to \pm \infty;
\end{aligned}
\end{equation}
\item Additional assumptions for $\tau_{\pm}$ and $V$ in each of the following cases:
\begin{enumerate}[label=\textbf{\alph*)}]
\item If $V$ is unbounded at $\pm \infty$, assume that $\tau_{\pm}^{(1)}$ exist for $\vert x \vert\gtrsim 1$ and there exist $\nu_{\pm}\geq -1$ for which
\begin{equation}\label{Tau}
\vert x \vert^{\nu_{\pm}}= \mathcal{O}\left(\tau_{\pm}(x)\right),\qquad\tau_{\pm}^{(1)}(x)=\mathcal{O}\left(\vert x \vert^{\nu_{\pm}} \tau_{\pm}(x)\right),\qquad x\to \pm \infty
\end{equation}
such that
\begin{equation}\label{V1}
\vert \Im V^{(1)}(x) \vert =\mathcal{O}\left(\tau_{\pm}(x)\left\vert  \Im V(x)\right\vert\right) , \qquad x\to \pm \infty,
\end{equation}
and assume further that: $\exists \varepsilon_{1} >0$,
\begin{equation}\label{Assump Bound Re Im}
\tau_{\pm}^{4}(x)\left(\tau_{\pm}^{12+4\varepsilon_{1}}(x)+\left\vert \Re V(x) \right\vert^{3+\varepsilon_{1}}\right)=\mathcal{O}\left( \left\vert \Im V(x) \right\vert^{4}\right), \qquad x\to \pm \infty;
\end{equation}
\item If $V$ is bounded at $\pm \infty$, then assume that: $\exists \varepsilon_{2}\in \left(0,\frac{1}{3}\right)$,
\begin{equation}\label{Assump Bound Above tau}
\tau_{\pm}(x)=\mathcal{O}\left(\vert x \vert^{\frac{1}{3}-\varepsilon_{2}}\right),\qquad x\to \pm \infty.
\end{equation}
\end{enumerate} 
\end{enumerate}
\end{assumpA}
Notice that the constant $\varepsilon_{1}$ in \eqref{Assump Bound Re Im} shall be considered sufficiently small. Since optimization this constant is not interesting, we are not trying to do that in our work. 

Next lines are some comments on Assumption \ref{Assumption 1}. By comparing with the same assumption for the potential in Schr\"{o}dinger operator \cite[Asm. I]{KS19}, the regularity of $V$ in the biharmonic operator requires to be higher, this can be seen clearly in the formula of the remainders $\mathcal{R}_{\lambda,0}$ in \eqref{R0} and in \cite[Eqn. 2.18]{KS19}. These remainders are the amount left over after performing WKB construction (see Section \ref{Section WKB}). As mentioned in the introduction, the assumption \eqref{Assump G Im V 1} makes the operator highly non-self-adjoint. Indeed, if $\left(\mathscr{L}_{V}\right)^{*}$ is the formal adjoint of $H_{V}$, \emph{i.e.} $\left(\mathscr{L}_{V}\right)^{*}=\mathscr{L}_{\overline{V}}$, it is straightforward to verify (at least algebraically) that the normality relation $\mathscr{L}_{V}\left(\mathscr{L}_{V}\right)^{*}=\left(\mathscr{L}_{V}\right)^{*}\mathscr{L}_{V}$ holds iff $\Im V^{(1)}=0$, \emph{i.e.} $\Im V$ is a constant. The sign of $\Im V$ in \eqref{Assump G Im V 1} will determine the sign for the decay of the pseudomode. The larger $\Im V$ is, the faster the pseudomode decreases at infinity, see the estimate \eqref{Exponential P 1}. Furthermore, we can invert the sign of $\Im V$ at infinity and the construction of pseudomode does not change more, see Remark \ref{Remark sign}. The condition \eqref{Assump G Der} is designed exclusively for the very special shapes of transport solutions and the remainder that will be described in the next section. To control the too large $\Re V$ and any wild behaviour of the derivatives of $V$, the conditions \eqref{Assump Bound Re Im} and \eqref{Assump Bound Above tau} are employed, furthermore, the natural  imposition of the assumption  \eqref{Assump Bound Re Im} on $\Re V$ is also discussed in Remark \ref{Remark Re V}, which shows that this condition is optimal in the polynomial case. Finally, two assumptions \eqref{Tau} and \eqref{V1} are technical tools to guarantee that the values of $\tau_{\pm}$ and $\Im V$ on some suitable interval can be comparable up to a constant (see \eqref{Eq v assymptotic}). This technique has been lately used very much, for instance, in \cite{KS19,AS20,KN20,MSV20}, however, in these papers, they fix the functions $\tau_{\pm}(x) \coloneqq \vert x \vert^{\nu}$ for some $\nu\geq -1$.
Here, we provide an improvement of this technique by controlling by general functions $\tau_{\pm}$ satisfying~\eqref{Tau}. For example, $\Im V(x)=e^{e^x}$ can be covered by our assumption with $\tau(x)=e^{x}$, but it is not allowed by the assumption in the papers mentioned above.

In order to state our first theorem, we denote the interval $B:=[-\beta_{-},\beta_{+}]$ where $\beta_{\pm}$ are non-negative constants  satisfying
\begin{equation}\label{Box B}
\limsup_{x\to -\infty} \Im V(x)<-\beta_{-}\qquad \text{ and } \qquad \beta_{+}< \liminf_{x\to +\infty} \Im V(x).
\end{equation}
\begin{theorem}\label{Theorem 1}
Let Assumption \ref{Assumption 1} hold for some $N\in \N_{0}$. Then there exists a $\lambda$-dependent family $\left(\Psi_{\lambda,N}\right)\subset \textup{Dom}(H_{V})$ such that for all $\alpha\gtrsim 1$ and $\beta \in B$, we have
\begin{equation}\label{Eq Main 1}
\frac{\Vert (H_{V}- \lambda)\Psi_{\lambda,N} \Vert}{\Vert \Psi_{\lambda,N} \Vert} \lesssim \alpha^{-\frac{N+1}{4}}+\sigma_{-}^{(N)}(\alpha)+\sigma_{+}^{(N)}(\alpha),
\end{equation}
where
\[\sigma_{-}^{(N)}(\alpha)\coloneqq\alpha^{-\frac{N+1}{4}}\displaystyle \sup_{x\in [-\delta_{\alpha}^{-},0]}\tau_{-}(x)^{N+1}\vert V(x)\vert,\qquad \sigma_{+}^{(N)}(\alpha)\coloneqq\alpha^{-\frac{N+1}{4}}\displaystyle \sup_{x\in [0,\delta_{\alpha}^{+}]}\tau_{+}(x)^{N+1}\vert V(x)\vert,\]
in which  $\delta_{\alpha}^{\pm}$ are defined as follows:
\begin{enumerate}[label=\textbf{\alph*)}]
\item If $V$ is unbounded at $\pm \infty$, $\delta_{\alpha}^{\pm}$ are the smallest positive solutions of the equations 
$$\frac{\left\vert\Im V(\pm x)\right\vert}{\tau_{\pm}(\pm x)}=\alpha^{\frac{3+\varepsilon_{1}}{4+\varepsilon_{1}}}.$$
\item If $V$ is bounded at $\pm \infty$, $\delta_{\alpha}^{\pm}=\alpha^{\frac{3}{4-3\varepsilon_{2}}}$ .
\end{enumerate}
In particular, if $V$ and $\tau_{\pm}$ is  bounded at $\pm \infty$, then
\begin{equation}\label{sigma N}
\sigma_{\pm}^{(N)}(\alpha)\lesssim \alpha^{-\frac{N+1}{4}}.
\end{equation}
\end{theorem}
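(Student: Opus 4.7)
The approach is a direct WKB (Liouville--Green) construction adapted from the second-order strategy of \cite{KS19,KN20} to the fourth-order setting. I would write a compactly-supported ansatz
$$\Psi_{\lambda,N}(x) = \chi_\alpha(x)\, e^{\psi_\lambda(x)}\sum_{k=0}^{N} a_{\lambda,k}(x)\, \alpha^{-k/4},$$
where $\chi_\alpha$ is a smooth cutoff equal to one on a fixed neighborhood of the origin and supported in $[-\delta_\alpha^-,\delta_\alpha^+]$. The phase $\psi_\lambda$ is selected by the eikonal equation $(\psi_\lambda')^4 = V-\lambda$: of the four fourth roots one picks, on $\R^-$, the root whose real part is positive, and on $\R^+$, the root whose real part is negative, so that $|e^{\psi_\lambda}|$ decays outward from zero. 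Condition \eqref{Assump G Im V 1} is what makes this consistent sign choice possible, since for $\alpha\gtrsim 1$ and $\beta\in B$ the argument of $V-\lambda$ stays close to $\pi$ on the tails, yielding an unambiguous ``decaying branch'' of $(V-\lambda)^{1/4}$ on each side of the origin with $\Re\psi_\lambda' \approx \mp \alpha^{1/4}/\sqrt{2}$.

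Next I would derive the transport hierarchy by expanding $e^{-\psi_\lambda}(\mathscr{L}_V-\lambda)(e^{\psi_\lambda}a)$ in powers of $\alpha^{-1/4}$. The eikonal equation annihilates the leading term, and matching successive orders determines $a_{\lambda,0},\ldots,a_{\lambda,N}$ through first-order linear ODEs whose coefficients involve the derivatives $\psi_\lambda^{(j)}$ for $j\le 4$; these in turn are expressed in $V^{(j)}$ for $j\le N+3$ via \eqref{Assump G Der}, which is precisely why the assumed regularity $V\in W^{N+3,\infty}_{\mathrm{loc}}(\R)$ is exactly what is needed. What remains is a residue whose amplitude is of order $\alpha^{-(N+1)/4}$ times powers of $\tau_\pm(x)$ and $|V(x)|$, matching the shape of $\sigma_\pm^{(N)}(\alpha)$ in \eqref{Eq Main 1}.

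For the norm estimates, I would split $(\mathscr{L}_V-\lambda)\Psi_{\lambda,N}$ into a bulk contribution $\chi_\alpha e^{\psi_\lambda}\mathcal{R}_{\lambda,N}$ and commutator terms $[\mathscr{L}_V,\chi_\alpha](e^{\psi_\lambda}\sum_k a_{\lambda,k}\alpha^{-k/4})$ supported near $x=\pm\delta_\alpha^\pm$. The definitions of $\delta_\alpha^\pm$ in cases (a) and (b) are engineered so that $|e^{\psi_\lambda(\pm\delta_\alpha^\pm)}|$ is exponentially small in any polynomial of $\alpha$, thereby absorbing the commutator terms. The bulk term produces the announced $\sigma_\pm^{(N)}(\alpha)$ together with the residual $\alpha^{-(N+1)/4}$, using \eqref{Assump Bound Re Im} in the unbounded case (to dominate $|\Re V|^{3+\varepsilon_1}$ by $|\Im V|^4$) or \eqref{Assump Bound Above tau} in the bounded case. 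The lower bound on $\|\Psi_{\lambda,N}\|$ is obtained by restricting to $|x|\lesssim \alpha^{-1/4}$, where $|e^{\psi_\lambda}|\approx 1$ and $a_{\lambda,0}\approx 1$.

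The main technical obstacle is the simultaneous reconciliation of two opposing demands on $\delta_\alpha^\pm$: it must be small enough that $|V|$, $\tau_\pm$ and their derivatives have controlled growth on the cutoff interval, yet large enough that the exponential decay $|e^{\psi_\lambda(\pm\delta_\alpha^\pm)}|$ beats the polynomial loss produced by $[\mathscr{L}_V,\chi_\alpha]$. The specific thresholds in the statement are precisely the balance points. Establishing them rigorously requires \eqref{Tau} and \eqref{V1} to make $\Im V$ and $\tau_\pm$ quasi-constant on $[-\delta_\alpha^-,\delta_\alpha^+]$ up to multiplicative constants, so that $\int_0^x \Re\psi_\lambda'(t)\,\dd t$ can be estimated by $|x|$ times a single value of $|\Im V|^{1/4}$, and to allow the comparison between the decay rate built into $\psi_\lambda$ and the growth of $V$ on the support of $\chi_\alpha$; this is the delicate bookkeeping step.
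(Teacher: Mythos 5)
Your framework (conjugated WKB on a $\lambda$-dependent interval $[-\delta_\alpha^-,\delta_\alpha^+]$, a transport hierarchy determined by successively annihilating powers, commutator terms supported near the cutoff boundary killed by exponential smallness, and a remainder producing the $\sigma_\pm^{(N)}$ factors) is the right skeleton and closely parallels the paper. However, your choice of eikonal branch is a genuine error, and it is fatal for the whole construction.

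First, a sign: for $\Psi_{\lambda,N}=\chi_\alpha e^{\psi_\lambda}\sum_k a_{\lambda,k}\alpha^{-k/4}$ the eikonal equation reads $(\psi_\lambda')^4 = \lambda - V$, not $V-\lambda$. More importantly, you then pick the root with $\Re\psi_\lambda'\approx\mp\alpha^{1/4}/\sqrt{2}$ on $\R^\pm$, i.e.\ a root whose real part is large. Since on $J_\alpha$ one has $\Re(\lambda-V)\approx\alpha>0$, any single smooth branch of the fourth root of $\lambda-V$ has either $\Re\approx\pm\alpha^{1/4}$ (the two ``real'' roots) or $|\Re|\approx|\Im V-\beta|/\alpha^{3/4}$ (the two ``imaginary'' roots), with the sign of the real part \emph{fixed} for each of the real roots. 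You therefore cannot pass continuously from a root with $\Re\approx+\alpha^{1/4}/\sqrt{2}$ to one with $\Re\approx-\alpha^{1/4}/\sqrt{2}$: that would require $\psi_\lambda'$ to cross the imaginary axis, where $(\psi_\lambda')^4$ is nonnegative real, contradicting $(\psi_\lambda')^4=\lambda-V$ whose argument stays near $0$. The ``consistent sign choice'' you attribute to \eqref{Assump G Im V 1} is actually a branch jump; the resulting $\Psi_\lambda$ is not in $W^{4,2}$ near the origin and hence not in $\mathrm{Dom}(\mathscr{L}_V)$, and no smoothing of the jump keeps the eikonal equation valid.

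The paper takes the opposite root: $\lambda\psi_{-1}^{(1)}=i(\lambda-V)^{1/4}\approx i\alpha^{1/4}$, which is purely imaginary to leading order, with small real part $\Re(\lambda\psi_{-1}^{(1)})\approx(\Im V-\beta)/\alpha^{3/4}$ \eqref{Re psi -1}. It is precisely assumption \eqref{Assump G Im V 1} together with $\beta\in B$ that makes this small real part negative for $x\lesssim-1$ and positive for $x\gtrsim 1$, so the \emph{same} smooth branch produces outward decay of $|e^{-P_{\lambda,N}}|$ on both sides. The decay is slow ($\sim\exp(-c\alpha^{-3/4}\int\Im V)$), which is what makes the factors $\tau_\pm^{N+1}|V|$ in $\sigma_\pm^{(N)}$ arise: the pseudomode lives on the whole of $J_\alpha$ and feels the growth of $V$. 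With your fast-decaying branch the pseudomode would be concentrated in $|x|\lesssim\alpha^{-1/4}$ and never see $V$ away from the origin, so the announced form of $\sigma_\pm^{(N)}$ could not emerge. Relatedly, the paper's lower bound $\|\Psi_{\lambda,N}\|\gtrsim1$ is obtained on a fixed interval $[0,a_+]$ (where $|\Re P_{\lambda,N}|\lesssim\alpha^{-3/4}$, cf.\ \eqref{Re P bounded}), not on a shrinking one as you suggest. Finally, the paper expands in powers of $\lambda^{-1}$ with all corrections in the exponent, $P_{\lambda,N}=\sum_{k=-1}^{N-1}\lambda^{-k}\psi_k$, rather than your amplitude expansion in $\alpha^{-1/4}$; the two are algebraically equivalent, but the phase form is what produces the closed recursion \eqref{Eq Transport Solution} and the structured remainder of Lemma \ref{Lem Reminder Estimate}.
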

Although the right hand side of \eqref{Eq Main 1} does not show us the decay obviously, Theorem~\ref{Theorem 1} are very workable in many elementary cases such as logarithmic functions, polynomials, exponential functions and even super-exponential (see its application in Subsection \ref{Application Thm 1}). This theorem shows us that the regularity of the potentials has a direct influence on the decay rates of the problem \eqref{Eq Intro Main Prob}: the more regular the potential is, the stronger the rate of decay in \eqref{Eq Intro Main Prob} is obtained. Furthermore, the shape of $\Omega$ corresponding to these large pseudoeigenvalues can be described generally as follows 
\[ \Omega:=\left\{\alpha+i\beta\in \C: \alpha\gtrsim 1 \text{ and } \beta\in B \right\},\]
in which the wide of the interval $B$ can be any size as long as it is contained in the interval $\displaystyle\left(\limsup_{x\to -\infty} \Im V(x), \liminf_{x \to +\infty} \Im V(x)\right)$.
Furthermore, the method can also be applied for the decaying but not integrable potential 
\[ V(x)=i \frac{\textup{sgn}(x)}{\vert x \vert^{\gamma}}, \qquad \vert x \vert \gtrsim 1,\, \gamma \in (0,1),\]
in which the Assumption \eqref{Assump G Im V 1} is broken (see Example \ref{Example Decaying} and Subsection \ref{Section Decaying potentials}).
\subsubsection{Large imaginary pseudoeigenvalues}
Concerning the pseudoeigenvalues whose imaginary part $\beta$ play the main role in making the right hand side of \eqref{Eq Intro Main Prob} decaying, the pseudomodes will be constructed such that their supports live completely in $\R_{0}^{+}$. Therefore, it will be more convenient to consider the operators on $L^{2}(\R_{0}^{+})$ instead of $L^2(\R)$. Then the application for the class of operators on $L^2(\R)$ is easily obtained by the trivial extension of pseudomodes from $\R_{0}^{+}$ to $\R$. We assume that $\Im V$ is strictly increasing for sufficiently large $x$ and unbounded at $+\infty$ such that we can determine a unique turning point $x_{\beta}>0$ of the equation
\begin{equation*}
\Im V(x_{\beta})= \beta.
\end{equation*}
The WKB analysis will be performed around $x_{\beta}$ and the support of pseudomode will be inside some appropriate neighborhood of this point. Here are our assumptions for this construction:
\begin{assumpA}\label{Assumption 2}
Let $N\in \N_{0}$, and let $V\in W^{N+3,2}_{\mathrm{loc}}(\R_{0}^{+})$ satisfy all conditions of Assumption~\ref{Assumption 1} and further the followings:
\begin{enumerate}[label=\textbf{\arabic*)}]
\item $\Im V$ goes to $+\infty$ as $x\to+\infty$
\begin{equation}
\lim_{x\to +\infty} \Im V(x) = +\infty;
\end{equation}
\item There exists $\left(t_{1},t_{2}\right)\in \left[0,\frac{\varepsilon_{1}}{20(3+\varepsilon_{1})}\right]^2$ satisfying
\begin{equation}\label{xi120}
t_{1}-\frac{1}{4+\varepsilon_{1}}t_{2}<\frac{\varepsilon_{1}}{20(4+\varepsilon_{1})}
\end{equation}
such that, for all $x \gtrsim 1$,
\begin{align}
\Im V(x)^{1-t_{1}}\tau(x)^{1+t_{2}} &\lesssim \Im V^{(1)}(x) ,\label{V'}\\
\left\vert \Im V^{(2)}(x)\right\vert &\lesssim \Im V^{(1)}(x)\tau(x),\label{V2}
\end{align}
where $\tau\coloneqq \tau_{+}$.
\end{enumerate}
\end{assumpA}
Although there are more conditions for the imaginary part $\Im V$ in Assumption \ref{Assumption 2}, the class of admissible potentials is still very large. In \cite[Cond. 5.2]{KS19} for the Schr\"{o}dinger operator, the authors set up the condition
\begin{equation}\label{Cond 5.2}
 \Im V(x)  x^{\nu}\lesssim \Im V^{(1)}(x), \qquad x \gtrsim 1,
\end{equation}
and this is a particular case of \eqref{V'} with $\tau(x)=x^{\nu}$ and $t_{1}=t_{2}=0$. However, \eqref{Cond 5.2} can not treat the potential $V(x)=i \ln(x)$ (here $\nu =-1$), it is because
\[ \Im V(x)\tau(x) = \ln(x) x^{-1}\qquad \text{ and }\qquad \Im V^{(1)}(x)=x^{-1}.\]
By allowing $t_{1}$ and $t_{2}$ to be flexible satisfying \eqref{xi120}, this potential can be covered in our assumption (see Example \ref{Example Log}).  
  Let us define a neighborhood of $x_{\beta}$ in which the pseudomode lives, that is
\[ J_{\beta}= \left(x_{\beta}-2\Delta_{\beta},x_{\beta}+2\Delta_{\beta}\right).\]
Here, $\Delta_{\beta}$ is defined as follows: From the assumption \eqref{Tau}, there exists a constant $\eta$ such that, for sufficiently large~$x>0$,
\begin{equation*}
\frac{\eta}{\tau(x)}\leq \frac{x^{-\nu}}{4},
\end{equation*} 
where $\nu:=\nu_{+}$ is the number defined in \eqref{Tau} and then we define
\begin{equation}\label{Delta beta}
\Delta_{\beta}\coloneqq \frac{\eta}{\tau(x_{\beta})}.
\end{equation}
Since $\nu\geq -1$, then $x_{\beta}-2\Delta_{\beta}\geq x_{\beta}-\frac{x_{\beta}^{\nu}}{2}\geq \frac{x_{\beta}}{2}$, we deduce that $J_{\beta}\subset \R^{+}$ for $x_{\beta}>0$. We see that if $\tau(x)=x^{-1}$ (see Examples \ref{Example Log} and \ref{Example Pol}), the support of the pseudomode is able to be extended on $\R^{+}$, \emph{i.e.} $\vert J_{\beta} \vert=4\Delta_{\beta}=4\eta x_{
\beta} \to +\infty$ as $\beta\to +\infty$. This is one of the strengths of this direct construction which makes it going beyond the semi-classical construction (see \cite[Remark~2.3]{KN20}). Now we can state our second theorem:
\begin{theorem}\label{Theorem 2}
Let Assumption~\ref{Assumption 2} hold for some $N\in \N_{0}$. Assume that there exists a ($\beta$-dependent) $\alpha$ such that the following holds as $\beta\to +\infty$, for all $x\in J_{\beta}$,
\begin{align}
&\alpha-\Re V(x) \approx \vert \alpha \vert,\label{alpha 1}\\
&\left[\beta\tau(x_{\beta})\right]^{\frac{4}{5}}\lesssim \vert \alpha \vert \lesssim \left[ \beta\tau(x_{\beta})^{-1}\right]^{\frac{4+\varepsilon_{1}}{3+\varepsilon_{1}}}.\label{alpha 20}
\end{align}
Then, there exist $c>0$, $\beta_{0}>0$ and a family $\left(\Psi_{\lambda,N}\right)\subset \textup{Dom}(H_{V})$ such that for all $\beta\geq \beta_{0}$, we have
\begin{equation*}
\frac{\Vert (H_{V}- \lambda)\Psi_{\lambda,N} \Vert_{L^2(\R^{+})}}{\Vert \Psi_{\lambda,N} \Vert_{L^2(\R^{+})}} \lesssim \kappa(\beta)+\sigma^{(N)}(\beta),
\end{equation*}
in which
\begin{itemize}
\item $\displaystyle \kappa(\beta)\coloneqq \exp\left(-c \frac{\Im V^{(1)}(x_{\beta})\tau(x_{\beta})^{-2}}{\vert\alpha \vert^{\frac{3}{4}}+\beta^{\frac{3}{4}}}\right)$,
\item $\displaystyle\sigma^{(0)}(\beta):=  \sum_{k=0}^{2}\sum_{j=1}^{k+1}\frac{\tau(x_{\beta})^{k+1}\left(\left\vert \Re V(x_{\beta})\right\vert^{j}+\beta^{j}\right)}{\left\vert \alpha\right\vert^{\frac{k-3}{4}+j}}$,
\item $\displaystyle\sigma^{(N)}(\beta):=  \sum_{k=0}^{3N-1}\sum_{j=1}^{k+N+1}\frac{\tau(x_{\beta})^{k+N+1}\left(\left\vert \Re V(x_{\beta})\right\vert^{j}+\beta^{j}\right)}{\left\vert \alpha\right\vert^{\frac{k+N-3}{4}+j}}$, \qquad  $N\geq 1$.
\end{itemize}
\end{theorem}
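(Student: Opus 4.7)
Following the philosophy of Section~\ref{Section Proof of theorem 1}, I seek the pseudomode in the form $\Psi_{\lambda,N} = \chi_\beta\cdot f_{\lambda,N}$, where $\chi_\beta$ is a smooth cutoff supported in $J_\beta$ and identically one on the concentric subinterval of half-width $\Delta_\beta$, and $f_{\lambda,N}$ is the WKB approximation produced by Section~\ref{Section WKB}. Expanding the action by Leibniz,
\[
(\mathscr{L}_V - \lambda)(\chi_\beta f_{\lambda,N}) = \chi_\beta\,(\mathscr{L}_V - \lambda)f_{\lambda,N} + \sum_{k=1}^{4} \binom{4}{k}\,\chi_\beta^{(k)}\,f_{\lambda,N}^{(4-k)},
\]
the proof reduces to three tasks: estimating the bulk WKB remainder; estimating the commutator terms (supported in the annulus $\Delta_\beta \leq |x-x_\beta| \leq 2\Delta_\beta$); and producing a matching lower bound on $\|\Psi_{\lambda,N}\|_{L^2(\R^+)}$.

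For the WKB piece, I take $f_{\lambda,N}(x) = \exp\!\bigl(\sum_{k\ge 0}^{M(N)} \phi_{k,\lambda}(x)\bigr)$ with $\phi_0$ solving the eikonal $(\phi_0')^4 = V_\lambda$. The decisive choice is the branch: since $V_\lambda(x_\beta) = \alpha - \Re V(x_\beta) \approx |\alpha|$ is real and strictly positive by \eqref{alpha 1}, the four fourth-roots at $x_\beta$ are $|V_\lambda|^{1/4}\{1,i,-1,-i\}$, and following continuously the branch of argument near $-\pi/2$, namely $\phi_0'(x) = -i\,(V_\lambda(x))^{1/4}$ in the principal determination, one verifies by tracking the rotation of $V_\lambda$ as $\Im V(x) - \beta$ changes sign that $\Re\phi_0'<0$ for $x>x_\beta$ and $\Re\phi_0'>0$ for $x<x_\beta$. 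A first-order Taylor expansion using $\Im V(x_\beta) = \beta$ and $A := \alpha - \Re V(x_\beta) \approx |\alpha|$ then yields
\[
\Re\phi_0(x) \;\approx\; -\,\frac{\Im V^{(1)}(x_\beta)}{8\,A^{3/4}}\,(x-x_\beta)^2,
\]
which confirms that $|f_{\lambda,N}|$ is sharply peaked at $x_\beta$. The higher-order phases $\phi_k$ are then solved recursively from transport equations designed to cancel the formal remainder order by order; stage $N$ consumes exactly the $N+3$ derivatives of $V$ guaranteed by Assumption~\ref{Assumption 2}.

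I then distribute the three estimates. First, the transport-equation bookkeeping of Section~\ref{Section WKB}, combined with the derivative bounds \eqref{Assump G Der}, \eqref{V'}, \eqref{V2} and the size control \eqref{alpha 1}, delivers pointwise estimates on each $\phi_k^{(j)}$ on $J_\beta$; substituting them into $(\mathscr{L}_V - \lambda)f_{\lambda,N}/f_{\lambda,N}$ and integrating produces precisely the double sum defining $\sigma^{(N)}(\beta)$, in which the factors $\tau(x_\beta)^{k+N+1}$ and $(|\Re V(x_\beta)|^j + \beta^j)/|\alpha|^{(k+N-3)/4 + j}$ record respectively the $\tau$-scaling of $V^{(n)}$ and the structure of the transport solutions. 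Second, on $\mathrm{supp}\,\chi_\beta^{(k)}$ one has $|\chi_\beta^{(k)}| \lesssim \tau(x_\beta)^k$ and $|f_{\lambda,N}^{(4-k)}/f_{\lambda,N}| \lesssim (|\alpha|+\beta)^{(4-k)/4}$, while integrating the quadratic bound on $-\Re\phi_0$ out to the edge $|x-x_\beta| = \Delta_\beta \sim 1/\tau(x_\beta)$ yields
\[
|f_{\lambda,N}(x)| \;\lesssim\; \exp\!\Bigl(-c\,\frac{\Im V^{(1)}(x_\beta)\,\tau(x_\beta)^{-2}}{|\alpha|^{3/4} + \beta^{3/4}}\Bigr) \;=\; \kappa(\beta)
\]
there, so the full commutator contributes $\lesssim \kappa(\beta)\,\|\Psi_{\lambda,N}\|$ after a short computation (polynomial prefactors in $\beta$ are absorbed into the exponential by slightly shrinking $c$). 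Third, I lower-bound $\|\Psi_{\lambda,N}\|_{L^2(\R^+)}$ by integrating $|f_{\lambda,N}|^2 \gtrsim 1$ over the subinterval of $J_\beta$ on which the quadratic estimate on $\Re\phi_0$ stays bounded.

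The hard part is the two-regime control of the phase across $J_\beta$. The Taylor estimate leading to the $|\alpha|^{3/4}$ scale is only sharp when $|V_\lambda|\approx|\alpha|$ throughout $J_\beta$; at the upper end of the window \eqref{alpha 20} the imaginary part $\beta - \Im V(x)$ becomes comparable to or larger than $|\alpha|$ near the edge of $J_\beta$, and the effective scale $|V_\lambda|^{3/4}\approx|\alpha|^{3/4} + \beta^{3/4}$ must replace it in the denominator of $\kappa(\beta)$. The double inequality \eqref{alpha 20}, together with the parameter freedom \eqref{xi120} for $(t_1,t_2)$ appearing in \eqref{V'}--\eqref{V2}, is precisely what reconciles the two regimes, keeping the transport estimates under control while $\kappa(\beta)$ stays quantitatively small; this is where the present framework genuinely extends \cite{KS19,KN20} beyond their polynomial-$\tau$ setting. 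Once this dichotomy is handled, the remainder is careful Leibniz/transport bookkeeping analogous to Section~\ref{Section Proof of theorem 1}, and the same machinery will yield the semi-classical Theorem~\ref{Theorem 3}.
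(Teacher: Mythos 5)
Your proposal is correct and follows essentially the same route as the paper: cutoff times WKB ansatz, the same branch choice for the eikonal solution (your $\phi_0'=-iV_\lambda^{1/4}$ is exactly the paper's $\exp(-P_{\lambda,N})$ with $\lambda\psi_{-1}^{(1)}=+iV_\lambda^{1/4}$), Taylor expansion around the turning point $x_\beta$ to get the Gaussian decay of the pseudomode, a commutator estimate over the annulus where the cutoff derivative lives, the separate lower bound on the denominator obtained by integrating over a thinner interval of width $\widetilde\Delta_\beta$, and the remainder bound from the transport-structure lemma. You also correctly identify the decisive dichotomy $|\alpha|\lessgtr\beta$ and the role of the free parameters $(t_1,t_2)$ from \eqref{xi120}--\eqref{V2} as the genuinely new ingredient relative to \cite{KS19,KN20}. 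The one phrase to tighten is the claim that the commutator term contributes ``$\lesssim\kappa(\beta)\,\|\Psi_{\lambda,N}\|$''; since $\|\Psi_{\lambda,N}\|$ itself can be of order $\widetilde\Delta_\beta^{1/2}$, the paper bounds the numerator and the denominator separately and only then takes the quotient, which is also what your third task accomplishes.
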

The same result for Schr\"{o}dinger operator given in \cite[Cond. (5.5)]{KS19}  is known to be optimal when considering the case $\Re V=0$, we believe that in this case our bound curves for $\alpha$ in  \eqref{alpha 20} is also optimal. The study of optimality of our estimates on these bounds constitutes an interesting open problem.
\subsubsection{Semi-classical setting}
Let us consider the semi-classical biharmonic operator on $\R$
\[ H_{h}= h^4\frac{\dd^4}{\dd x^4}+W(x),\]
where $h$ is the positive semi-classical parameter. By using the same construction as for Theorem \ref{Theorem 2}, we can establish a pseudomode for this operator as $h\to 0$:
\begin{theorem}\label{Theorem 3}
Let $N\in \N_{0}$ and let $W\in W^{N+3,\infty}_{\textup{loc}}(\R)$, $\mu>0$, $x_{0}\in \R$. Assume that there exists a neighborhood $I$ of $x_{0}$ such that the function~$\Im W(x)- \Im W(x_{0})$ changes its sign at the point $x_{0}$ on $I$. By fixing $z=\mu+W(x_{0})$, then there exist $h_{0}>0$ and a family $\Psi_{h,N}\in \textup{Dom}(H_{h})$ such that for all $h\in (0,h_{0})$,
\[ \frac{\left\Vert \left(H_{h}-z\right)\Psi_{h,N}\right\Vert}{\Vert \Psi_{h,N} \Vert}\lesssim h^{N+1}.\]
\end{theorem}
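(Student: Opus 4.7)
The plan is to adapt the WKB construction developed in Section \ref{Section Proof Theorem 23} for Theorem \ref{Theorem 2}, treating the point $x_{0}$ as the turning point and using the semi-classical parameter $h$ in place of $\beta^{-1}$. I would seek the pseudomode in the form
\[
\Psi_{h,N}(x)=e^{-\varphi(x)/h}\left(\sum_{j=0}^{N}h^{j}f_{j}(x)\right)\chi(x),
\]
where $\chi\in\mathcal{C}_{0}^{\infty}(I)$ is a cut-off equal to $1$ on a smaller neighborhood of $x_{0}$.

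The eikonal equation is $(\varphi')^{4}=z-W$. Since $z-W(x_{0})=\mu>0$, the four characteristic roots at $x_{0}$ are $\pm\mu^{1/4}$ and $\pm i\mu^{1/4}$. I would select the purely imaginary branch $\varphi'(x_{0})=\pm i\mu^{1/4}$, with the sign matched to the direction of the sign change of $\Im W(x)-\Im W(x_{0})$ on $I$. A short Taylor expansion of $(z-W)^{1/4}$ about $x_{0}$ yields
\[
\Re\varphi'(x)\approx \pm\frac{\mu^{-3/4}}{4}\bigl(\Im W(x)-\Im W(x_{0})\bigr),
\]
so by the sign-change hypothesis, $\Re\varphi$ attains a strict minimum equal to $0$ at $x_{0}$ and is strictly positive on $I\setminus\{x_{0}\}$. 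Hence $|e^{-\varphi/h}|=e^{-\Re\varphi/h}$ concentrates near $x_{0}$ on a scale dictated by $h$ and the order of vanishing of $\Im W-\Im W(x_{0})$ at $x_{0}$.

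The amplitudes $f_{0},\ldots,f_{N}$ are then determined recursively by the transport equations obtained on plugging the ansatz into $(H_{h}-z)\Psi_{h,N}$ and collecting powers of $h$. The leading transport equation is a first-order linear ODE in $f_{0}$ with nonvanishing coefficient proportional to $(\varphi')^{3}\neq 0$ near $x_{0}$; subsequent equations are inhomogeneous ODEs whose source terms involve derivatives of lower-order amplitudes. The regularity $W\in W^{N+3,\infty}_{\textup{loc}}$ is exactly what the recursion demands up to order $N$, as in the proof of Theorem \ref{Theorem 2}. Applying $(H_{h}-z)$ to $\Psi_{h,N}$ then produces three kinds of remainder: a WKB tail of order $h^{N+1}$ supported where $\chi\equiv 1$, cut-off derivative terms supported on $\{\chi'\neq 0\}\subset\{x:\Re\varphi(x)\geq c>0\}$ and therefore $O(e^{-c/h})$, and cross terms handled exactly as in Section \ref{Section Proof Theorem 23}. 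A Laplace-method lower bound $\|\Psi_{h,N}\|\gtrsim h^{\kappa}$ for some fixed $\kappa>0$ (obtained using $f_{0}(x_{0})\neq 0$) absorbs against this upper bound, yielding $\|(H_{h}-z)\Psi_{h,N}\|/\|\Psi_{h,N}\|\lesssim h^{N+1}$.

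The main obstacle will be the careful branch selection for $\varphi'$ and the verification that the transport cascade remains non-degenerate on the full neighborhood of $x_{0}$. The condition $\mu>0$, ensuring $z-W(x_{0})\neq 0$, is precisely what keeps the eikonal from degenerating and the transport operator elliptic near $x_{0}$; the sign-change hypothesis on $\Im W-\Im W(x_{0})$ then provides the exponential localization required for the pseudomode to live on a vanishing neighborhood of $x_{0}$.
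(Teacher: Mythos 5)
Your proposal is correct, but it takes a genuinely different route from the paper. You carry out a classical semi-classical WKB construction from scratch, with the polynomial-amplitude ansatz $\Psi_{h,N}=e^{-\varphi/h}\bigl(\sum_{j=0}^{N}h^{j}f_{j}\bigr)\chi$, selecting the imaginary eikonal branch and solving the transport cascade for $f_{0},\dots,f_{N}$ directly. The paper instead derives Theorem \ref{Theorem 3} as a \emph{corollary} of the large-eigenvalue machinery already in place: it factors $h^{4}$ out, writes $H_{h}-z=h^{4}\bigl(\tfrac{\dd^{4}}{\dd x^{4}}+V_{h}-\lambda_{h}\bigr)$ with $V_{h}=h^{-4}W$ and $\lambda_{h}=h^{-4}z$, and then plugs this rescaled pair into the exponential-phase pseudomode $\Psi_{h,N}=\xi\exp(-P_{h,N})$ of Section \ref{Section Proof Theorem 23}, with $\psi_{k}^{(1)}$ given by \eqref{Eq Eikonal Solution}--\eqref{Eq Transport Solution}. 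Lemma \ref{Lem Transport Solution} and Lemma \ref{Lem Reminder Estimate} then furnish $\vert\lambda_{h}^{-k}\psi_{k}^{(1)}\vert\lesssim h^{k}$ and the $O(h^{N+1})$ remainder bound with no further work, so the branch selection, the non-degeneracy of the transport hierarchy and the $W^{N+3,\infty}$ regularity bookkeeping all come for free. Your route buys transparency and independence from the preceding sections, which is pedagogically attractive, but it leaves you to re-establish the analogues of those lemmata for the amplitude-form ansatz, namely that the transport cascade closes using exactly $N+3$ derivatives of $W$ and that the resulting remainder is $O(h^{N+1})$ with $L^{2}$ weight comparable to $\Vert\Psi_{h,N}\Vert$. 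Two small imprecisions in your write-up that would need fixing in a full proof: the lower bound $\Vert\Psi_{h,N}\Vert\gtrsim h^{\kappa}$ is not what absorbs the $O(h^{N+1})$ WKB tail (that term already carries the factor $\Vert e^{-\varphi/h}\chi\Vert_{L^{2}}\approx\Vert\Psi_{h,N}\Vert$ once $f_{0}$ is bounded away from zero near $x_{0}$), it is needed only to beat the exponentially small cut-off contributions; and one should state explicitly that $f_{0}=C(\varphi')^{-3/2}$ is bounded away from zero on the whole support of $\chi$, not merely at $x_{0}$.
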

\subsection{Applications}\label{Section Application}
Our goal in this section is to give some examples which are direct or indirect (Example \ref{Example Decaying}) application of Theorem \ref{Theorem 1}, Theorem \ref{Theorem 2} and Theorem \ref{Theorem 3}.
\subsubsection{Application of Theorem \ref{Theorem 1}}\label{Application Thm 1}
\begin{example}
Let us list some smooth potentials $V$ defined on $\R$ such that the Assumption~\ref{Assumption 1} holds.
\begin{enumerate}[label=\textbf{\arabic*)}]
\item $V$ is bounded at both $-\infty$ and $+\infty$: Consider two smooth bounded potentials on $\R$
\begin{equation}\label{Examp arctan}
V_{1}(x)=i \arctan(x) \text{ and } V_{2}(x)=i \frac{x}{\sqrt{x^2+1}}.
\end{equation}
They satisfy Assumption~\ref{Assumption 1} with $\tau_{\pm}(x)=\left(x^2+1\right)^{-\frac{1}{2}}$ and 
\[\lim_{x\to-\infty} \Im V_{j}(x)=-1, \qquad\lim_{x\to +\infty} \Im V_{j}(x)=1,\qquad \text{ for } j=1,2.\]
Since both potentials are smooth, we can achieve the arbitrary fast decay in \eqref{sigma N} by taking any large $N$. More precisely, Theorem~\ref{Theorem 1} states that: For any $N\in \N_{0}$ and for any $\beta_{\pm}\in [0,1)$, there exists a family $(\Psi_{\lambda,N})$ such that
\[ \frac{\Vert (H_{V_{j}}- \lambda)\Psi_{\lambda,N} \Vert}{\Vert \Psi_{\lambda,N} \Vert} \lesssim \alpha^{-\frac{N+1}{4}}, \qquad \text{ for }j=1,2,\]
for all $\lambda$ belonging to $\Omega:=\left\{\alpha+i\beta\in \C: \alpha\gtrsim 1 \text{ and } \beta\in [-\beta_{-},\beta_{+}]\subset (-1,1)\right\}$ whose picture is given in Figure \ref{Fig arctan}.
\begin{figure}[h]
 \centering
\includegraphics[width=0.7\textwidth]{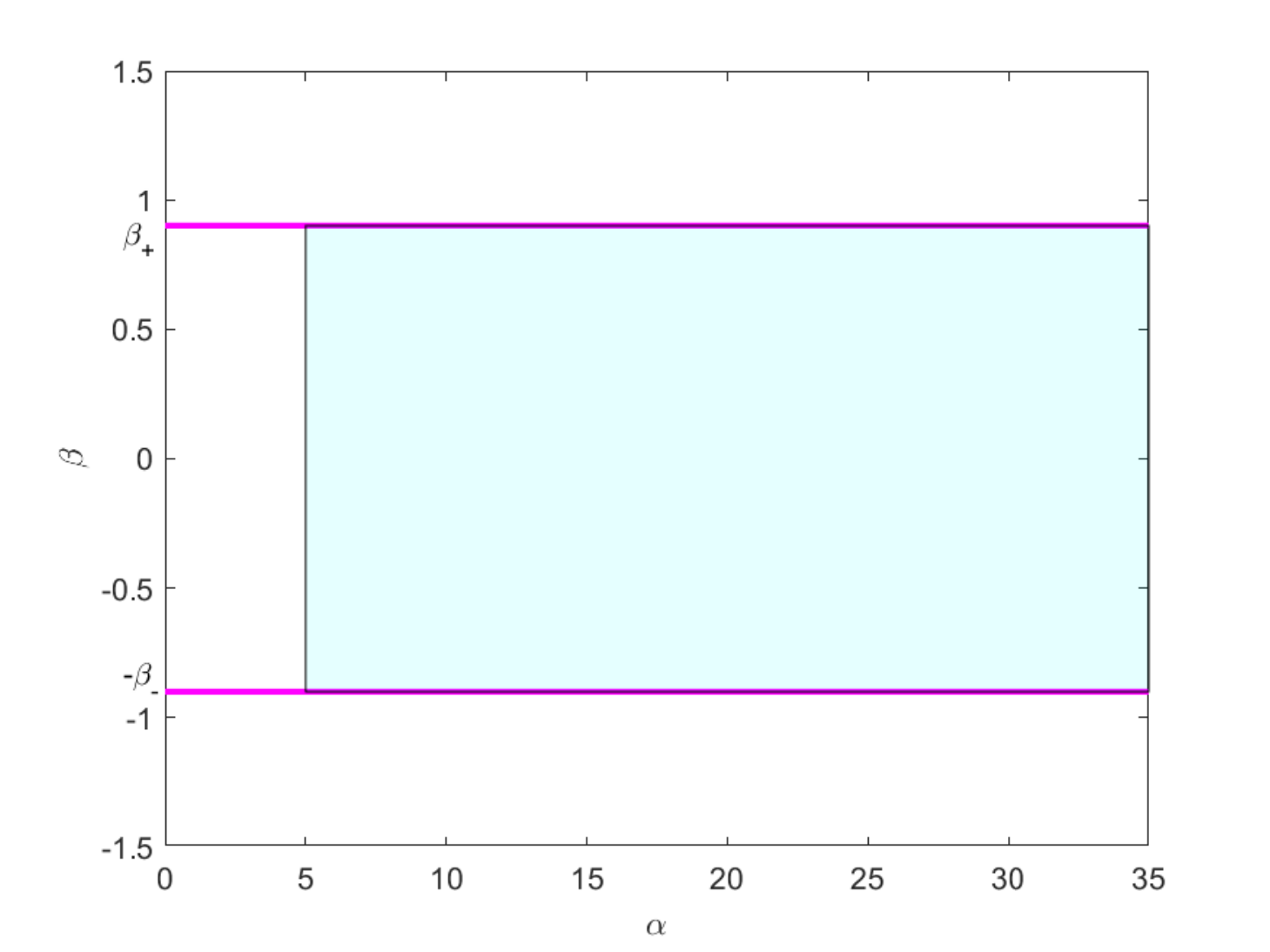}
\caption{Illustration of the shape of $\Omega$ (in cyan color) associated with the potentials $V_{1}(x)=i \arctan(x)$ and $V_{2}(x)=i \frac{x}{\sqrt{x^2+1}}$ given in \eqref{Examp arctan}.}\label{Fig arctan}
\end{figure}
\item $V$ is bounded at $-\infty$ and unbounded at $+\infty$: A simple choice is
\[ V(x)=i \left( e^{x}-1 \right).\]
It meets the condition \eqref{Assump G Im V 1} since 
\[ \lim_{x\to-\infty} \Im V(x) =-1,\qquad \lim_{x\to+\infty} \Im V(x) =+\infty,\]
and it satisfies the other conditions with $\tau_{\pm}=1$ and~$\nu_{+}=0$. Depending on the behaviour of $V$ at $\pm \infty$, $\sigma_{-}^{(N)}(\alpha)$ and $\sigma_{+}^{(N)}(\alpha)$ have different decaying:
\[ \sigma_{-}^{(N)}(\alpha)\lesssim \alpha^{-\frac{N+1}{4}},\qquad \sigma_{+}^{(N)}(\alpha)=\alpha^{\frac{2-N}{4}+\varepsilon}.\]
Here, to estimate $\sigma_{+}^{(N)}$, we just notice that $\delta_{\alpha}^{+}$ is the solution of the equation 
\[ \Im V(x)=\alpha^{\frac{3+\varepsilon_{1}}{4+\varepsilon_{1}}},\]
therefore, by writing $\frac{3+\varepsilon_{1}}{4+\varepsilon_{1}}\eqqcolon\frac{3}{4}+\varepsilon$, we get the above estimate, since
\[ \sigma_{+}^{(N)}(\alpha)=\alpha^{-\frac{N+1}{4}}\displaystyle \sup_{x\in [0,\delta_{\alpha}^{+}]}\vert V(x)\vert=\alpha^{-\frac{N+1}{4}} \Im V\left(\delta_{\alpha}^{+}\right)=\alpha^{\frac{3+\varepsilon_{1}}{4+\varepsilon_{1}}-\frac{N+1}{4}}.\]
Then, for any $N\in \N_{3}$ and for any $\beta_{-}\in [0,1)$ and $\beta_{+}\in \R_{0}^{+}$, there exists a family $(\Psi_{\lambda,N})$ such that
\[ \frac{\Vert (H_{V}- \lambda)\Psi_{\lambda,N} \Vert}{\Vert \Psi_{\lambda,N} \Vert} \lesssim \alpha^{\frac{2-N}{4}+\varepsilon},\]
for all $\displaystyle\lambda\in \Omega:=\left\{\alpha+i\beta\in \C: \alpha\gtrsim 1 \text{ and } \beta\in [-\beta_{-},\beta_{+}]\subset (-1,+\infty)\right\}$.
\end{enumerate}

\end{example}
\begin{example}[\emph{Potentials with logarithmic imaginary}]
Consider $V\in W^{N+3,\infty}_{\textup{loc}}(\R)$, with~$N\geq~0$, satisfying Assumption \ref{Assumption 1} with $\tau_{\pm}(x)=\left(x^2+1\right)^{-\frac{1}{2}}$, which satisfy \eqref{Tau} with $\nu_{\pm}=-1$, that has the form
\[ V(x)= \Re V(x) + i\ln\left(x+\sqrt{x^2+1}\right),\]
where 
$$\left\vert \Re V(x)\right\vert \lesssim \vert x \vert^{\rho}\ln\left(\vert x \vert\right)^4, \qquad \vert x \vert\gtrsim 1, \text{ with some } \rho<\frac{4}{3}.$$
The condition $\rho<\frac{4}{3}$ is sufficient to guarantee \eqref{Assump Bound Re Im}. For instance, $\Re V(x)$ is a polynomial of degree $\rho$. The range such that the constants $\beta_{\pm}$ in Theorem \ref{Theorem 1} can be taken is $\R_{0}^{+}$ since
\[\lim_{x\to-\infty} \Im V(x)=-\infty, \qquad\lim_{x\to +\infty} \Im V(x)=+\infty.\]
Concerning $\sigma_{\pm}^{(N)}(\alpha)$, since the functions $\tau_{\pm}(x)^{N+1}\vert V(x) \vert$ are bounded on $\R$ for any $N\geq 1$ \textup{(}or for any $N\geq 0$ if $\rho<1$\textup{)}, we do not need to compute $\delta_{\alpha}^{\pm}$ in this situation. Accordingly, for any $N\in \N_{1}$ and for any $\beta_{\pm}\in \R_{0}^{+}$, there exists a family $(\Psi_{\lambda,N})$ such that
\[ \frac{\Vert (H_{V}- \lambda)\Psi_{\lambda,N} \Vert}{\Vert \Psi_{\lambda,N} \Vert} \lesssim \alpha^{-\frac{N+1}{4}}, \]
for all $\displaystyle\lambda\in \Omega:=\left\{\alpha+i\beta\in \C: \alpha\gtrsim 1 \text{ and } \beta\in [-\beta_{-},\beta_{+}]\subset \R\right\}$.
\end{example}
\begin{example}[\emph{Polynomial-like potentials}]\label{Example Pol 1}
Let us take a look at the potential $V\in W^{N+3,\infty}_{\textup{loc}}(\R)$, with $N\geq 0$, satisfying Assumption \ref{Assumption 1} with $\tau_{\pm}(x)= (x^2+1)^{-\frac{1}{2}}$ and having the form
\[ \vert \mathrm{Re}\, V(x) \vert\lesssim \vert x \vert^{\rho},\qquad  \vert \mathrm{Im}\, V(x)\vert  \approx  \vert x \vert^{\gamma} ,\qquad \vert x \vert \gtrsim 1,\]
with $\rho\in \R$ and $\gamma\geq 0$. For examples, $\Re V$ and $\Im V$ are, respectively, the polynomials of degree $\rho$ and $\gamma$. It is necessary to assume that $\gamma\geq 0$ in order to meet the condition \eqref{Assump G Im V 1} and we assume further that $\gamma>\frac{3\rho-4}{4}$ such that \eqref{Assump Bound Re Im} is satisfied. Accordingly, the fast growth of $\vert\Re V(x)\vert$ require the fast growth of $\Im V(x)$. In particular if $\rho<\frac{4}{3}$ \textup{(}\emph{i.e.} $\vert \Re V(x) \vert$ grows slower that $\vert x \vert^{\frac{4}{3}}$\textup{)} even a bounded $\Im V$ fits. In order to apply Theorem \ref{Theorem 1}, let us denote the quantity
\[ \omega:=\max\{ \rho, \gamma\}.\]
Clearly, $\omega\geq 0$ and  $V$ is bounded if and only if $\omega=0$. 
When $\vert V\vert$ is unbounded, $\delta_{\alpha}^{\pm}$ is the solution of the equation 
$$\vert x \vert^{\gamma}\left(x^2+1\right)^{\frac{1}{2}}=\alpha^{\frac{3+\varepsilon_{1}}{4+\varepsilon_{1}}}.$$
When $x$ is large enough, since $\vert x \vert^{\gamma}\left(x^2+1\right)^{\frac{1}{2}}\approx \vert x \vert^{\gamma+1}$, we can approximate the solution of the above equation with the notation $\approx$ introduced in Subsection \ref{Handy notation} as follows
\[ \delta^{-}_{\alpha}=\delta^{+}_{\alpha}=\delta\approx \alpha^{\frac{3}{4(\gamma+1)}+\varepsilon}.\]
Here $\varepsilon>0$ can be made arbitrary small by an appropriate choice of small $\varepsilon_{1}>0$. Hence Theorem \ref{Theorem 1} results that
\begin{equation}\label{Polynomial}
\frac{\Vert (H_{V}- \lambda)\Psi_{\lambda,N} \Vert}{\Vert \Psi_{\lambda,N} \Vert} = \left\{
\begin{aligned}
&\mathcal{O}\left(\alpha^{-\frac{N+1}{4}}\right), \qquad & \omega \leq N+1,\\
&\mathcal{O}\left( \alpha^{-\frac{N+1}{4}+\frac{3}{4}\frac{\omega-N-1}{\gamma+1}+\varepsilon(\omega-N-1)}\right),\qquad &\omega>N+1,
\end{aligned}
\right.
\end{equation}
as $\alpha\to+\infty$ and $\beta\in [-\beta_{-},\beta_{+}]$ which is mentioned in \eqref{Box B}. When $V$ is bounded, the decay is also included in the case $\omega\leq N+1$. To improve the decay rate in the second case, we let $\varepsilon$ be small enough and notice that from the condition $\gamma>\frac{3\rho-4}{4}$, we can control the other term by
\begin{equation*}
\frac{3}{4}\frac{\omega-N-1}{\gamma+1}<\left\{ 
\begin{aligned}
&\frac{3}{4}\qquad &&\text{if } \gamma\geq \rho,\\
&1 \qquad &&\text{if } \gamma< \rho.
\end{aligned}
\right.
\end{equation*}
By considering $\varepsilon$ very small in \eqref{Polynomial}, we see that the pseudomode with $N=3$ \textup{(}\emph{i.e.} we require at least $V\in W^{6,\infty}_{\textup{loc}}(\R)$\textup{)} is sufficient to treat all polynomial-like potentials. Comparing this with the same results for Schr\"{o}dinger operators in~\cite[Ex. 3.8]{KS19} and Dirac operators in~\cite[Ex. 2]{KN20}, more terms in the pseudomode expansion are needed in the higher order differential operators.
\end{example}
\begin{example}[\emph{Super-exponential potential}]\label{Example Superexp}
We devote the other application of Theorem \ref{Theorem 1} for the potential that is smooth and grow very fast at infinity, that is
\[ V(x)= \cosh(\sinh(x))+i \sinh(\sinh(x)).\]
The Assumption \ref{Assumption 1} is satisfied with $\tau_{\pm}(x)=\cosh(x)$ and $\nu_{\pm}=0$. We emphasize here that~\cite[Asm. I]{KS19} can not cover this potential, more precisely \cite[Cond. 3.2]{KS19} can not be satisfied. The solution $\delta$ of the equation $\frac{\vert\sinh(\sinh(\pm \delta))\vert}{\cosh(\delta)}=\alpha^{\frac{3+\varepsilon_{1}}{4+\varepsilon_{1}}}$ can be estimated as follows
\[\delta_{\alpha}^{-}=\delta_{\alpha}^{+}=\delta\approx \ln\left(\ln\left(\alpha^{\frac{3}{4}+\varepsilon}\right)\right),\]
where $\varepsilon$ can be made arbitrary small by an appropriate choice of small $\varepsilon_{1}$.
Then, for arbitrary $\beta_{\pm} \in \R_{0}^{+}$ and for all $\beta \in [-\beta_{-}, \beta_{+}]\subset \R$, the pseudomodes with $N\geq 3$ leads to a decay
\[\frac{\Vert (H_{V}- \lambda)\Psi_{\lambda,N} \Vert}{\Vert \Psi_{\lambda,N} \Vert} = \mathcal{O}\left(\alpha^{\frac{2-N}{4}+2\varepsilon}\right), \qquad \text{ as } \alpha \to +\infty. \]
\end{example}
\begin{example}[Decaying potentials]\label{Example Decaying}
Consider a class of potentials with the asymptotic behaviour 
\begin{equation}\label{Examp Decay}
V(x) = i \frac{\textup{sgn}(x)}{\vert x \vert^{\gamma}}, \qquad \vert x \vert\gtrsim 1,\ 0<\gamma<1,
\end{equation}
which spoils the assumption~\eqref{Assump G Im V 1}. However, the analysis in Subsection \ref{Section Decaying potentials} shows us that we can apply the same construction as for Theorem \ref{Theorem 1} to set up pseudomodes for large pseudoeigenvalues $\lambda=\alpha+i\beta$ satisfying
\begin{equation}\label{psi-1 decaying 2}
\vert\beta \vert \alpha^{\frac{3}{4}\frac{\gamma}{1-\gamma}} =o(1),\qquad \text{ as } \alpha\to +\infty,\, \vert \beta \vert \to 0.
\end{equation}
Furthermore, if we make the restriction \eqref{psi-1 decaying 2} stronger by considering
\[ \vert \beta \vert \lesssim \alpha^{-\frac{3}{4}\frac{\gamma}{1-\gamma}-\varepsilon},\qquad \text{ as } \alpha \to +\infty,\]
we will have a decay
\[ \frac{\Vert (H_{V}- \lambda)\Psi_{\lambda,N} \Vert}{\Vert \Psi_{\lambda,N} \Vert} = \mathcal{O}\left(\alpha^{-\frac{N+1}{4}}\right), \qquad\text{ as } \alpha \to +\infty.\]
In other words, the $\Omega$ in the main problem \eqref{Eq Intro Main Prob} can be described by \textup{(see Figure \ref{Fig Decay})}
\[ \Omega=\left\{ \alpha+i\beta\in \C: \alpha\gtrsim 1,\ \vert \beta \vert\lesssim \alpha^{-\frac{3}{4}\frac{\gamma}{1-\gamma}-\varepsilon} \right\}. \]
In \cite[Eqn. 3.24]{KS19}, Krej\v{c}i\v{r}\'{i}k and Siegl used this type of decaying potential to give a natural Laptev-Safronov eigenvalues bounds for the Schr\"{o}dinger operator with $L^{p}$-potentials \textup{($p>1$)} which appears in \cite[Theorem 5]{LS09}. Here, in the same manner, by observing that $V\in L^{p}$ if $\gamma p>1$, it yields that
\[ \frac{3}{4} \frac{\gamma}{1-\gamma}(p-1)=\frac{3}{4}\frac{\gamma p-\gamma}{1-\gamma}>\frac{3}{4}.\]
From this, we obtain a bound for $\Omega$, which is also a bound for the distribution of the eigenvalues of the biharmonic operator
\[ \Omega \subset \left\{\alpha+i\beta \in \C:  \vert \beta \vert^{p-1}=o \left(\alpha^{-\frac{3}{4}} \right) , \alpha \to +\infty\right\}.\]
If the power of $\alpha$ is $-\frac{1}{2}$ in the Schr\"{o}dinger case, this power is replaced by $-\frac{3}{4}$ for the biharmonic one.
\begin{figure}[h]
 \centering
\includegraphics[width=0.7\textwidth]{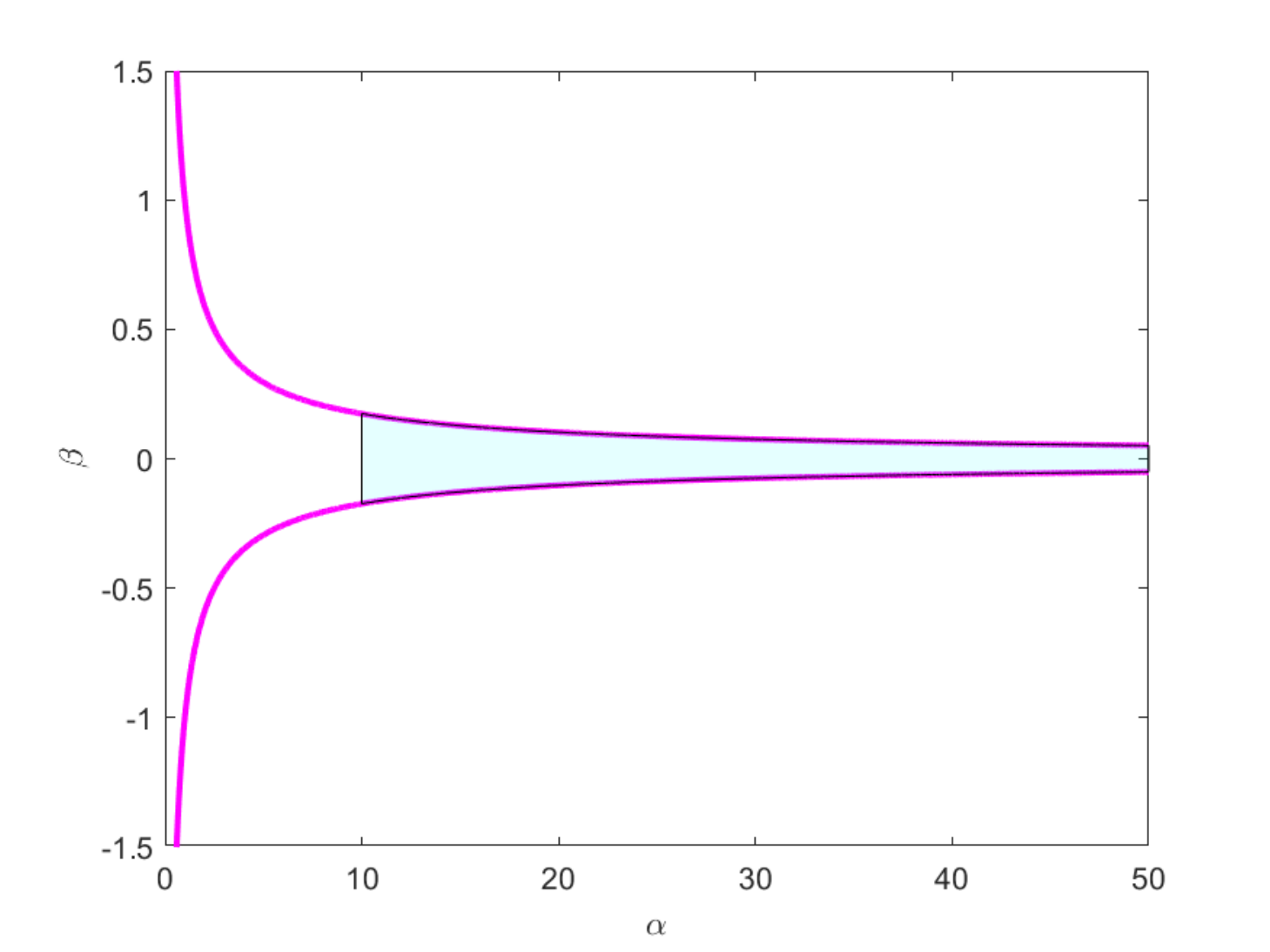}
\caption{Illustrations of the shapes of $\Omega$ (in cyan color) associated with the potential $V(x)= i \frac{\textup{sgn}(x)}{\vert x \vert^{\gamma}}$ given in \eqref{Examp Decay} with $\gamma=\frac{1}{2}$. The curves are the graphs of $\beta=\pm \alpha^{-\frac{3}{4}-\frac{1}{100}}$.}\label{Fig Decay}
\end{figure}
\end{example}
\subsubsection{Application of Theorem \ref{Theorem 2}}\label{Subsection App Theorem 2}
Now, we would like to apply Theorem \ref{Theorem 2} to study the  elementary potentials considered in Subsection \ref{Application Thm 1}. It is worthwhile to mention here that the below Examples \ref{Example Log} and \ref{Example SuperExponential} can not be covered by \cite[Cond. 5.2]{KS19} or \cite[Cond. 4.3]{KN20}. 
\begin{example}\label{Example Log}
Let us consider again the logarithmic potential $V\in W_{\mathrm{loc}}^{N,2}(\R_{0}^{+})$ that has the following behaviour
\begin{equation}\label{Eq Logarithm C}
 V(x) := i \ln(x), \qquad  x\gtrsim 1.
\end{equation}
All conditions of Assumption~\ref{Assumption 2} are satisfied with $\tau(x)=\left(x^2+1\right)^{-\frac{1}{2}}\approx x^{-1}$ \textup{(}$\nu=-1$ in~\eqref{Tau}\textup{)} as $x\gtrsim 1$, and for instance $t_{1}=0$ and some $t_{2} >0$. Given $\beta>0$, then $x_{\beta}>0$ is determined by the relation
$x_{\beta}=e^{\beta}$. Since $\Re V(x)=0$, the conditions \eqref{alpha 1} and \eqref{alpha 20} are assured iff
\begin{equation}\label{Exam Log alpha}
\beta^{\frac{4}{5}} \exp\left(-\frac{4}{5} \beta \right) \lesssim \alpha \lesssim \beta^{\frac{4}{3}-\varepsilon}\exp\left( \left(\frac{4}{3}-\varepsilon\right)\beta \right),
\end{equation}
where $\varepsilon>0$ can be chosen arbitrarily small by an appropriate choice of small $\varepsilon_{1}$. Then, thanks to the second inequality in \eqref{Exam Log alpha}, we have
\begin{align*}
\kappa(\beta)\lesssim \left\{
\begin{aligned}
&\exp \left(- c\beta^{\frac{3\varepsilon}{4}-1}\exp\left( \beta^{\frac{3\varepsilon}{4}}\right)\right) \qquad &&\text{if } \alpha>\beta,\\
&\exp \left(- c\beta^{-\frac{3}{4}}\exp\left( \beta\right)\right) \qquad &&\text{if } \alpha\leq \beta.
\end{aligned}
\right.
\end{align*}
Let us present here the detail of estimating $\sigma^{(N)}$ for $N\geq 1$ (for $N=0$, it is analogous). If $\alpha>\beta$, we estimate straightforwardly as follows
\begin{align*}
\sigma^{(N)}(\beta)&\lesssim \sum_{k=0}^{3N-1}\sum_{j=1}^{k+N+1} \frac{\exp\left(-(k+N+1)\beta\right)\beta^{j} }{\alpha^{\frac{k+N-3}{4}+j}}\lesssim  \sum_{k=0}^{3N-1}\frac{\beta\exp\left(-(k+N+1)\beta\right) }{\beta^{\frac{k+N+1}{4}}}\\
&\lesssim \beta^{\frac{3-N}{4}}\exp\left(-(N+1)\beta\right).
\end{align*}
When $\alpha \leq \beta$, we employ the first inequality in \eqref{Exam Log alpha}, we obtain
\begin{align*}
\sigma^{(N)}(\beta)&\lesssim \sum_{k=0}^{3N-1} \frac{\exp\left(-(k+N+1)\beta\right)\beta^{k+N+1} }{\alpha^{\frac{k+N-3}{4}+k+N+1}}\lesssim  \sum_{k=0}^{3N-1}\frac{\beta^{k+N+1}\exp\left(-(k+N+1)\beta\right) }{\left[ \beta^{\frac{4}{5}}\exp\left(-\frac{4}{5}\beta\right)\right]^{\frac{5}{4}k+\frac{5}{4}N+\frac{1}{4}}}\\
&\lesssim \beta^{\frac{4}{5}} \exp\left(-\frac{4}{5}\beta\right).
\end{align*}
In summary, Theorem \ref{Theorem 2} provides the pseudomodes such that
\[ \frac{\Vert (H_{V}- \lambda)\Psi_{\lambda,N} \Vert}{\Vert \Psi_{\lambda,N} \Vert} = \left\{
\begin{aligned}
&\mathcal{O}\left(\beta^{\frac{3-N}{4}}\exp(-(N+1)\beta)\right) \qquad &&\text{ if } \alpha >\beta,\\
&\mathcal{O}\left(\beta^{\frac{4}{5}} \exp\left(-\frac{4}{5}\beta\right)\right) \qquad &&\text{ if }  \alpha \leq \beta,
\end{aligned}
\right.\]
where $\lambda \to \infty$ in 
\begin{equation}\label{Eq Log Omega C 1}
\Omega:= \left\{\alpha+i\beta \in \C : \beta\gtrsim 1 \text{ and }  \beta^{\frac{4}{5}} \exp\left(-\frac{4}{5} \beta \right) \lesssim \alpha \lesssim \beta^{\frac{4}{3}-\varepsilon}\exp\left( \left(\frac{4}{3}-\varepsilon\right)\beta \right) \right\}.
\end{equation}
From the definition of $\Omega$, we see that the pseudospectral region contains even points which stay very close to the line $\alpha=0$ \textup{(see Figure~\ref{Fig: Log and Pol} \subref{Fig: Log})}.
\begin{figure}[h]
 \centering
 \begin{subfigure}[c]{0.35\textwidth}
 \centering
 \includegraphics[width=\textwidth]{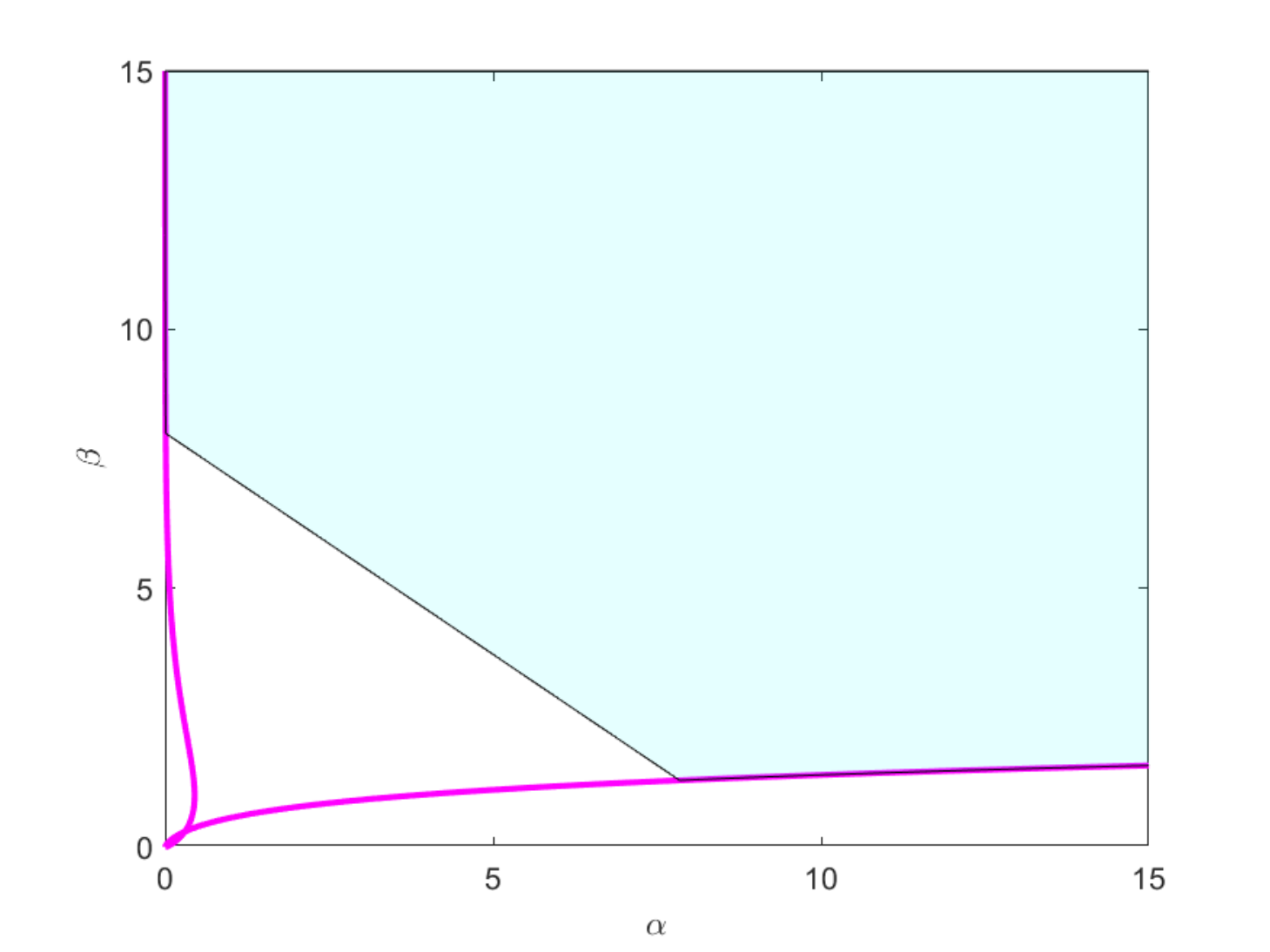}
 \caption{$V(x)=i \ln(x)$.}
 \label{Fig: Log}
\end{subfigure}
~
\begin{subfigure}[c]{0.35\textwidth}
 \centering
\includegraphics[width=1\textwidth]{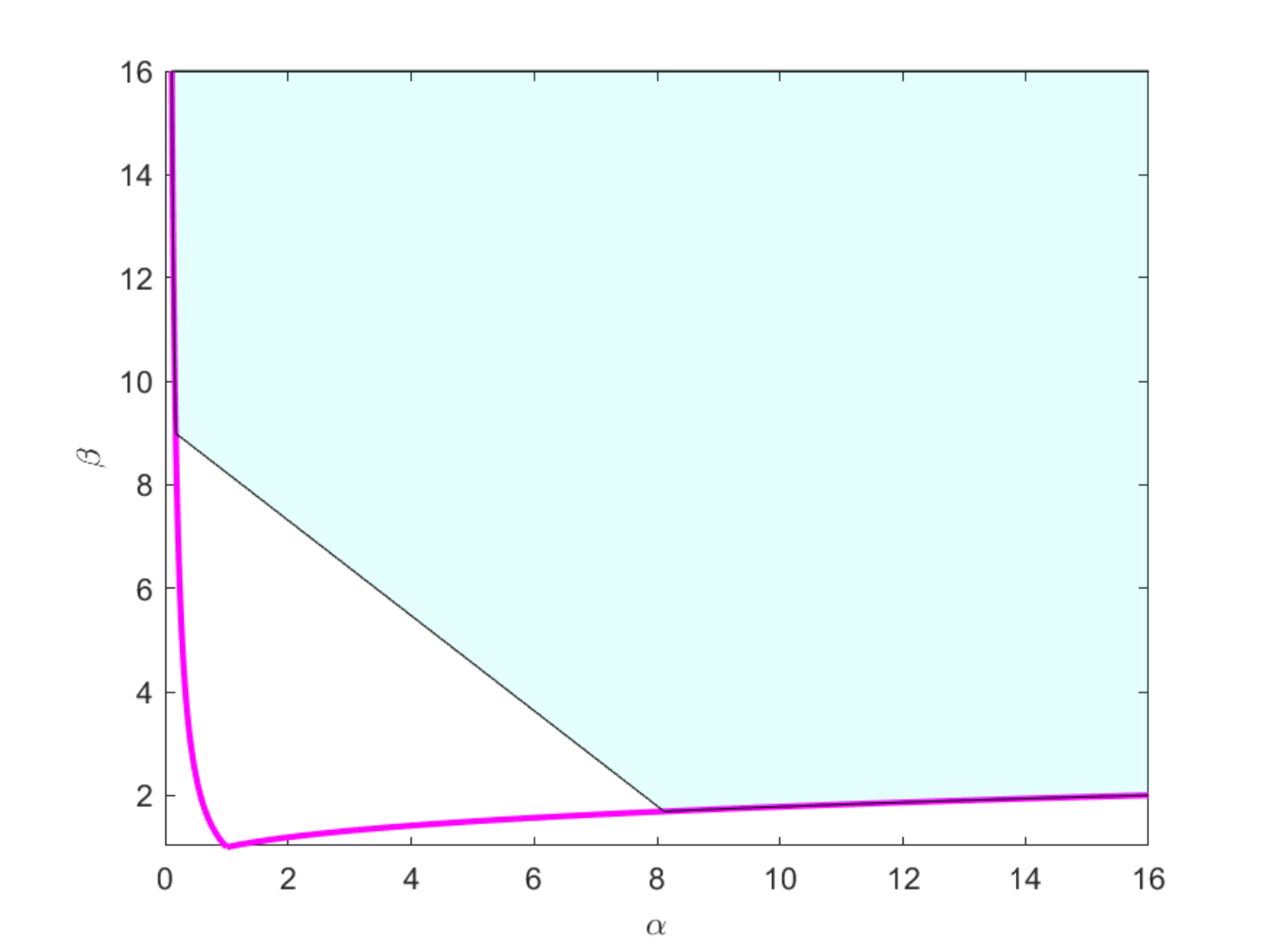}
\caption{$V(x)=ix^{\frac{1}{2}}$.}
\label{Fig: Pol1}
\end{subfigure}
~
\begin{subfigure}[c]{0.35\textwidth}
 \centering
\includegraphics[width=1\textwidth]{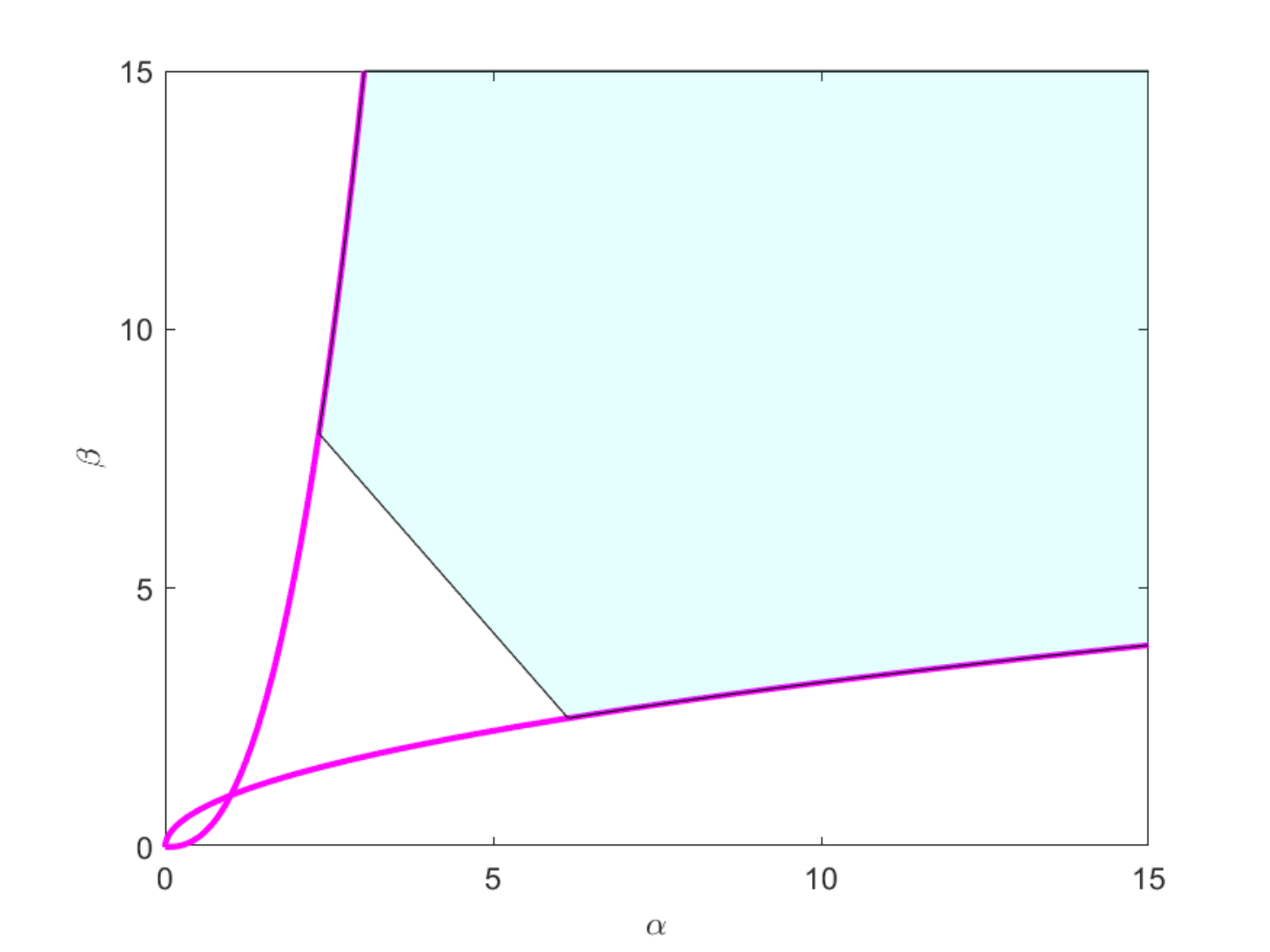}
\caption{$V(x)=i x^{2}$.}
\label{Fig: Pol2}
\end{subfigure}
\captionsetup{singlelinecheck=off}
\caption[foo bar]{Illustration of the shapes of $\Omega$ (in cyan color) with the logarithmic function $V$ given in \eqref{Eq Logarithm C} and the polynomial $V$ given in \eqref{Eq Polynomial}:
\begin{enumerate}[label=(\alph*)]
  \item $V(x)=i \ln(x)$: The curves are the graphs of $\alpha=\beta^{\frac{4}{5}}\exp\left(-\frac{4}{5}\beta\right)$ and $\alpha=\beta^{\frac{4}{3}-\frac{1}{100}}\exp\left(\left(\frac{4}{3}-\frac{1}{100}\right)\beta\right)$,
  \item $V(x)=i x^{\frac{1}{2}}$: The curves are the graphs of $\alpha=\beta^{-\frac{4}{5}}$ and $\alpha=\beta^{4-\frac{1}{100}}$,
   \item $V(x)=i x^{2}$: The curves are the graphs of $\alpha=\beta^{\frac{2}{5}+\frac{1}{100}}$ and $\alpha=\beta^{2-\frac{1}{100}}$.
\end{enumerate}
}
\label{Fig: Log and Pol}
\end{figure}
\end{example}

\begin{example}\label{Example Pol}
Next, we want to study the polynomial potential on $\R_{0}^{+}$:
\begin{equation}\label{Eq Polynomial}
V(x) =  i x^{\gamma}, \qquad x\gtrsim 1
\end{equation}
where $\gamma>0$. All the conditions of Assumption \ref{Assumption 2} are satisfied with $\tau$ chosen as in Example~\ref{Example Log} and we can choose $t_{1}=t_{2}=0$. Given $\beta>0$, then $x_{\beta}>0$ is determined by $x_{\beta}=\beta^{\frac{1}{\gamma}}$. It is straightforward to check that the conditions \eqref{alpha 1} and \eqref{alpha 20} holds if we choose $\alpha=\alpha(\beta)$ in the following way
\begin{equation}\label{alpha Poly}
\beta^{\frac{4}{5}\left(1-\frac{1}{\gamma}\right)}\lesssim \alpha \lesssim \beta^{\frac{4}{3}\left(1+\frac{1}{\gamma}\right)-\varepsilon},
\end{equation}
where $\varepsilon>0$ can be chosen arbitrarily small by an appropriate choice of small $\varepsilon_{1}$. By the second inequality in \eqref{alpha Poly}, the term $\kappa(\beta)$ can be estimated as
\begin{align*}
\kappa(\beta)\lesssim \left\{
\begin{aligned}
&\exp \left(- c\beta^{\frac{3\varepsilon}{4}}\right) \qquad &&\text{if } \alpha>\beta,\\
&\exp \left(- c\beta^{\frac{1}{\gamma}+\frac{1}{4}}\right) \qquad &&\text{if } \alpha\leq \beta.
\end{aligned}
\right.
\end{align*}
By performing the estimate analogously in Example \ref{Example Log} to control $\sigma^{(N)}(\beta)$ and consider $N$ large enough, we obtain the decay in two following cases.
\begin{enumerate}[label=\textbf{\alph*)}]
\item If $0<\gamma<1$, we get
\[ \frac{\Vert (H_{V}- \lambda)\Psi_{\lambda,N} \Vert}{\Vert \Psi_{\lambda,N} \Vert} = \left\{
\begin{aligned}
&\mathcal{O}\left(\beta^{-(N+1)\left(\frac{1}{\gamma}+\frac{1}{4}\right)+1}\right) \qquad &&\text{ if } \alpha >\beta,\\
&\mathcal{O}\left(\beta^{\frac{4}{5}\left(1-\frac{1}{\gamma}\right)}\right) \qquad &&\text{ if }  \alpha \leq \beta,
\end{aligned}
\right.\]
when $\lambda \to \infty$ in the region
\[ \Omega:= \left\{\alpha+i\beta \in \C: \beta \gtrsim 1 \text{ and } \beta^{\frac{4}{5}\left(1-\frac{1}{\gamma}\right)}\lesssim \alpha \lesssim \beta^{\frac{4}{3}\left(1+\frac{1}{\gamma}\right)-\varepsilon} \right\}.\]
The shape of this region as $\gamma=\frac{1}{2}$ is sketched in \textup{Figure~\ref{Fig: Log and Pol} \subref{Fig: Pol1}}.
\item If $\gamma>1$, we need to strengthen the first inequality in \eqref{alpha Poly} to obtain the decay of $\sigma(\beta)$ and thus
\[ \frac{\Vert (H_{V}- \lambda)\Psi_{\lambda,N} \Vert}{\Vert \Psi_{\lambda,N} \Vert} = \left\{
\begin{aligned}
&\mathcal{O}\left(\beta^{-(N+1)\left(\frac{1}{\gamma}+\frac{1}{4}\right)+1}\right) \qquad &&\text{ if } \alpha >\beta,\\
&\mathcal{O}\left(\beta^{\frac{4}{5}\left(1-\frac{1}{\gamma}\right)-\varepsilon\frac{5N+1}{4}}\right) \qquad &&\text{ if }  \alpha \leq \beta,
\end{aligned}
\right.\]
as $\lambda \to \infty$ in
\[ \Omega:= \left\{\alpha+i\beta \in \C: \beta \gtrsim 1 \text{ and } \beta^{\frac{4}{5}\left(1-\frac{1}{\gamma}\right)+\varepsilon}\lesssim \alpha \lesssim \beta^{\frac{4}{3}\left(1+\frac{1}{\gamma}\right)-\varepsilon} \right\}.\]
The shape of this region as $\gamma=2$ is given in \textup{Figure~\ref{Fig: Log and Pol} \subref{Fig: Pol2}}.
\end{enumerate}
\end{example}
\begin{example}\label{Example SuperExponential}
The final example that we want to study is the superexponential potential
\begin{equation}\label{Super-exponential}
V(x)=ie^{e^x},\qquad  x \gtrsim 1.
\end{equation}
All conditions of Assumption \ref{Assumption 2} are satisfied with $\tau(x)=e^{x}$, $\nu=0$ and $t_{1}=t_{2}=0$. Given $\beta>0$, the turning point $x_{\beta}$ is determined by the relation
\[ x_{\beta}=\ln\left( \ln(\beta)\right),\qquad  \beta \gtrsim 1.\]
Then $\tau(x_{\beta})=\ln(\beta)$ and the conditions \eqref{alpha 20} are equivalent to
\begin{equation}\label{alpha super-ex}
 \left[ \beta \ln(\beta) \right]^{\frac{4}{5}}\lesssim \alpha \lesssim \left[ \frac{\beta}{\ln(\beta)}\right]^{\frac{4}{3}-\varepsilon},
\end{equation}
where $\varepsilon>0$ can be chosen arbitrarily small by an appropriate choice of small $\varepsilon_{1}$ as above examples. By the second inequality in \eqref{alpha super-ex}, we obtain the estimate for $\kappa(\beta)$:
\begin{align*}
\kappa(\beta)\lesssim \left\{
\begin{aligned}
&\exp \left(- c\left(\frac{\beta}{\ln(\beta}\right)^{\frac{3\varepsilon}{4}}\right) &&\text{if } \alpha>\beta,\\
&\exp \left(- c\frac{\beta^{\frac{1}{4}}}{\ln (\beta)}\right) &&\text{if } \alpha\leq \beta.
\end{aligned}
\right.
\end{align*}
In order to get the decay of $\sigma^{(N)}(\beta)$, we need to strengthen the first inequality in \eqref{alpha super-ex} as follows
\begin{equation}\label{alpha super-ex 2}
 \beta^{\frac{4}{5}+\varepsilon} \ln(\beta)^{\frac{4}{5}} \lesssim \alpha \lesssim \left[ \frac{\beta}{\ln(\beta)}\right]^{\frac{4}{3}-\varepsilon}.
\end{equation}
 then it yields that
\[ \frac{\Vert (H_{V}- \lambda)\Psi_{\lambda,N} \Vert}{\Vert \Psi_{\lambda,N} \Vert} = \left\{
\begin{aligned}
&\mathcal{O}\left(\ln(\beta)^{N+1}\beta^{\frac{3-N}{4}}\right) \qquad &&\text{ if } \alpha >\beta,\\
&\mathcal{O}\left(\ln(\beta)^{\frac{4}{5}}\beta^{\frac{4}{5}-\varepsilon\frac{5N+1}{4}}\right) \qquad &&\text{ if }  \alpha \leq \beta,
\end{aligned}
\right.\]
as $\lambda \to \infty$ in
\[ \Omega:= \left\{\alpha+i\beta \in \C: \beta \gtrsim 1 \text{ and }  \beta^{\frac{4}{5}+\varepsilon} \ln(\beta)^{\frac{4}{5}} \lesssim \alpha \lesssim \left[ \frac{\beta}{\ln(\beta)}\right]^{\frac{4}{3}-\varepsilon} \right\}.\]
\begin{figure}[h]
 \centering
 \includegraphics[width=0.7\textwidth]{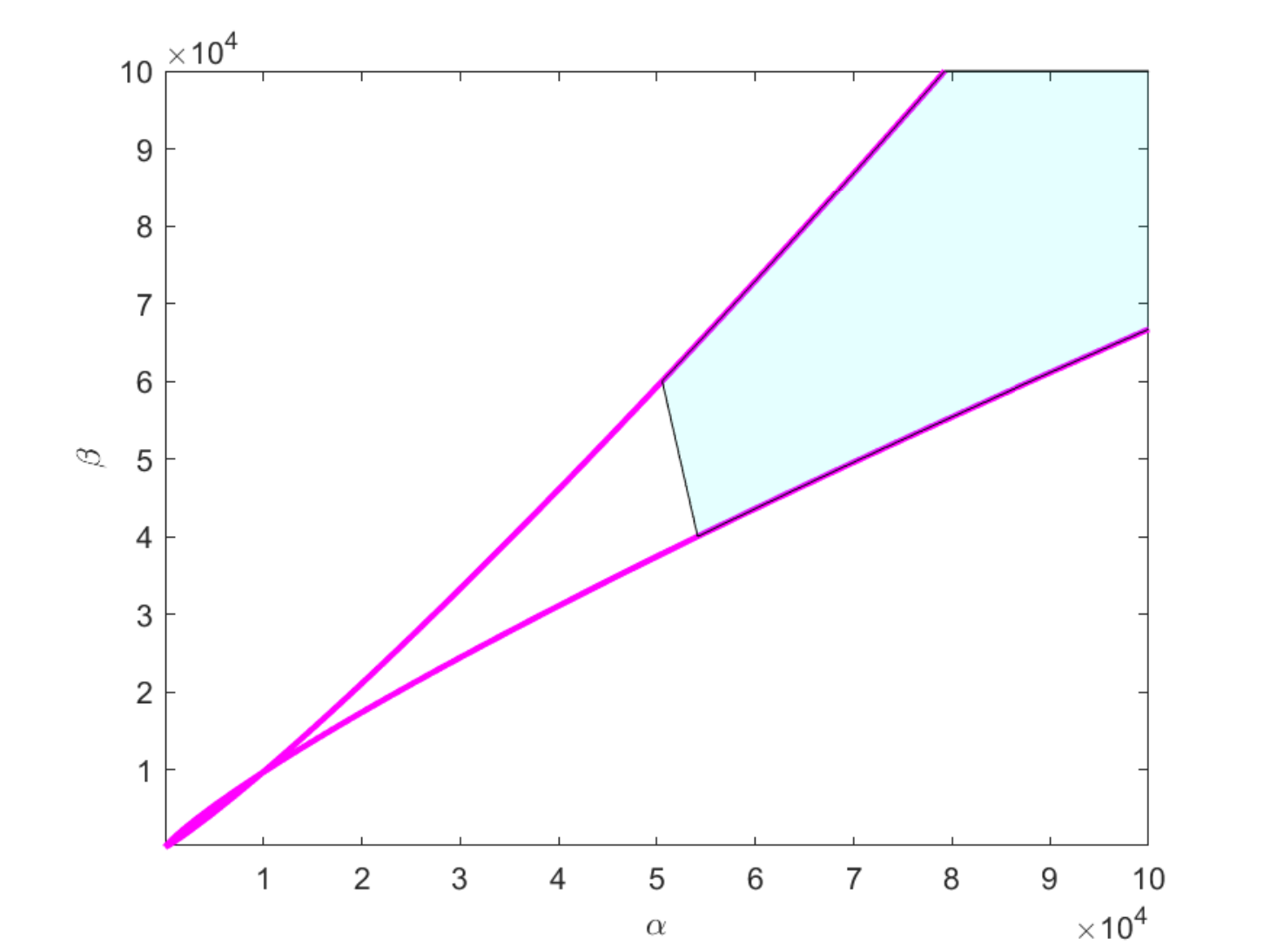}
 \caption{Illustration of the shape of $\Omega$ (in cyan color) with the superexponential $V$ given in \eqref{Super-exponential} in which the curves are the graphs of $\alpha=\beta^{\frac{4}{5}+\frac{1}{100}}\ln(\beta)^{\frac{4}{5}}$ and $\alpha=\left[\frac{\beta}{\ln(\beta} \right]^{\frac{4}{3}-\frac{1}{100}}$.}\label{Fig: Supexp}
 \end{figure}
\end{example}
From Figure~\ref{Fig: Log and Pol} and Figure~\ref{Fig: Supexp}, we see that the pseudospectrum of the operator comes close to the imaginary axis as the potential grows slowly at infinity as logarithmic functions and root-type ones $x^{\gamma}$ with $\gamma \in (0,1)$. When the potential grows faster as polynomial and exponential functions, the pseudospectrum stay away the axes accordingly.
\subsubsection{Application of Theorem \ref{Theorem 3}}
We recall some standard notions that is introduced in~\cite[Sec. IV]{KSTV15}. The symbol associated with $H_{h}$ is
\[ f(x,\xi)=\xi^{4}+W(x), \]
and its semi-classical pseudospectrum of $H_{h}$ is given by the closure of the set
\[ \Lambda=\left\{ \xi^{4}+W(x): \xi^{3} \Im W'(x)<0\right\}.\]
Since the sign of $\xi$ can be chosen freely, we can describe $\Lambda$ as
\[ \Lambda=\left\{ \xi^{4}+W(x): \xi \neq 0,\ \Im W'(x)\neq 0\right\},\]
this condition is also given in Davies' work \cite{Da99} for Schr\"{o}dinger operator.
Then $z\in \Lambda$ if and only if there exists $(\xi_{0},x_{0})\in \R^2$ such that $z=\xi_{0}^4+W(x_{0})$ with $\xi_{0}\in \R\setminus\{0\}$ and  $\Im W'(x_{0})\neq 0$. Taylor's Theorem yields
\[ \Im W(x)-\Im W(x_{0})=\Im W'(x_{0})(x-x_{0})+ \mathcal{O}\left(\vert x- x_{0}\vert^2\right).\]
Then, there exists a neighbourhood of $x_{0}$ in which the sign of the function~$\Im W(x)-~\Im W(x_{0})$ changes at the point $x_{0}$ and we can apply Theorem \ref{Theorem 3} to obtain the pseudomode for the operator $H_{h}$ corresponding to pseudoeigenvalue $z$. Furthermore, we can extend $\Lambda$ to the set
\begin{align*}
\widetilde{\Lambda}=\left\{ \xi^{4}+W(x)\left|\begin{aligned}
&\xi \neq 0, \text{ there exists } p \in \N_{0} \text{ such that }\\
 &\Im W^{(j)}(x)=0 \text{ for all } j\neq 0, j\leq 2p \text{ and } \Im W^{(2p+1)}(x)\neq 0
\end{aligned} \right.\right\}
\end{align*}
which appears in the paper \cite{Karel04} for Schr\"{o}dinger operator, and the above result remains true. In summary, we obtained the result as in the papers  \cite{Da99, Karel04} for the biharmonic operator.
\section{WKB construction}\label{Section WKB}
\subsection{The WKB expansion}
Let $P_{\lambda}: \R \to \C$ be a sufficient regular function depending on the parameter $\lambda$ which will be determined later. We consider the formal conjugated operator of $\mathscr{L}_{V}-\lambda$:
\begin{align*}
e^{P_{\lambda}}\left(\mathscr{L}_{V}-\lambda \right) e^{-P_{\lambda}}=e^{P_{\lambda}}\left(\frac{\dd^4}{\dd x^4}+V(x)-\lambda \right) e^{-P_{\lambda}}= \mathcal{D}_{\lambda}+\mathcal{R}_{\lambda},
\end{align*}
where $\mathcal{D}_{\lambda}$ has a differential expression and $\mathcal{R}_{\lambda}$ has a multiplication expression
\begin{equation}\label{Eq Reminder}
\begin{aligned}
&\mathcal{D}_{\lambda}\coloneqq\frac{\dd^4}{\dd x^4}-4P_{\lambda}^{(1)}\frac{\dd}{\dd x^3}+6\left[ \left(P_{\lambda}^{(1)}\right)^2-P_{\lambda}^{(2)}\right]\frac{\dd^2}{\dd x^2} -4\left[P_{\lambda}^{(3)}-3P_{\lambda}^{(1)}P_{\lambda}^{(2)} +\left(P_{\lambda}^{(1)}\right)^3\right]\frac{\dd}{\dd x},\\
&\mathcal{R}_{\lambda}\coloneqq-P_{\lambda}^{(4)} + 4P_{\lambda}^{(1)}P_{\lambda}^{(3)}+3\left(P_{\lambda}^{(2)}\right)^2-6\left(P_{\lambda}^{(1)}\right)^2P_{\lambda}^{(2)}+\left(P_{\lambda}^{(1)}\right)^4+V(x)-\lambda.
\end{aligned}
\end{equation}
The general WKB strategy is as follows. We look for the pseudomodes in the form
\[ \Psi_{\lambda} = \xi_{\lambda} e^{-P_{\lambda}},\]
where $\xi_{\lambda}$ is a cut-off function whose shape is determined later depending on the behaviour of $V$ at infinity ($\pm \infty$). From the triangular inequality, it yields that
\begin{equation}\label{WKB}
\frac{\left\Vert \left(\mathscr{L}_{V} - \lambda\right)\Psi_{\lambda} \right\Vert}{\Vert \Psi_{\lambda} \Vert}=\frac{\lV e^{-P_{\lambda}}\mathcal{D_{\lambda}}\xi_{\lambda}+ \xi_{\lambda}e^{-P_{\lambda}}\mathcal{R}_{\lambda}\rV}{\lV \Psi_{\lambda} \rV}\leq \frac{\lV e^{-P_{\lambda}}\mathcal{D_{\lambda}}\xi_{\lambda}\rV}{\lV \Psi_{\lambda} \rV}+ \lV \mathcal{R}_{\lambda}\rV_{L^{\infty}(J_{\lambda})}.
\end{equation}
Here $J_{\lambda}$ is the support of the cut-off $\xi_{\lambda}$. The WKB idea is to look for the phase $P_{\lambda}$ in the following form,
\begin{equation}\label{Eq P}
P_{\lambda,n} (x)= \sum_{k=-1}^{n-1} \lambda^{-k} \psi_{k}(x),\qquad n\in \N_{0},
\end{equation}
where functions $\left(\psi_{k}\right)_{k\in[[-1,n-1]]}$ are to be determined by solving some ordinary differential equations (ODEs); the number $n$ is chosen later depending on the maximal possible order derivative of $V$. In principle, by letting $\lambda\to \infty$, the first term in the right hand side of \eqref{WKB} can be shown exponentially decay thanks to consideration on the support of $\xi_{\lambda}$ and the second term often decreases with the rate power of $\lambda^{-1}$. The more regular the potential is, the stronger the rate of decay in \eqref{WKB} is obtained.

 Let us start with $n=0$ and put $P_{\lambda,0}$ into the formula of $\mathcal{R}_{\lambda}$ in \eqref{Eq Reminder}, we obtain
\[ \mathcal{R}_{\lambda,0}:=-\lambda \psi_{-1}^{(4)} + 4\lambda^2 \psi_{-1}^{(1)}\psi_{-1}^{(3)}+3\lambda^2 \left(\psi_{-1}^{(2)}\right)^2-6\lambda^3\left(\psi_{-1}^{(1)}\right)^2\psi_{-1}^{(2)}+\lambda^4\left(\psi_{-1}^{(1)}\right)^4+V(x)-\lambda.\]
By solving the equation $ \lambda^4(\psi_{-1}^{(1)})^4+V(x)-\lambda=0$, we often call it \enquote{eikonal} equation, the forth order of $\lambda$ in $R_{\lambda,0}$ is removed:
\begin{equation}\label{Remainder 0}
\mathcal{R}_{\lambda,0}:=-\lambda \psi_{-1}^{(4)} + 4\lambda^2 \psi_{-1}^{(1)}\psi_{-1}^{(3)}+3\lambda^2 (\psi_{-1}^{(2)})^2-6\lambda^3(\psi_{-1}^{(1)})^2\psi_{-1}^{(2)}.
\end{equation}
From this simple observation, we wish that when we increase $n$ in $P_{\lambda,n}$, the order of $\lambda$ appearing in the remainder $\mathcal{R}_{\lambda,n}$ has been reduced accordingly. For $n\in \N_1$, we replace $P_{\lambda,n}$ into $\mathcal{R}_{\lambda}$ in \eqref{Eq Reminder}, we get
\begin{align*}
\mathcal{R}_{\lambda,n} &\coloneqq - \sum_{k=-1}^{n-1} \lambda^{-k} \psi_{k}^{(4)}+4\sum_{k=-2}^{2n-2} \lambda^{-k} \sum_{\alpha_{1}+\alpha_{2}=k} \psi_{\alpha_{1}}^{(1)}\psi_{\alpha_{2}}^{(3)}+3\sum_{k=-2}^{2n-2} \lambda^{-k} \sum_{\alpha_{1}+\alpha_{2}=k} \psi_{\alpha_{1}}^{(2)}\psi_{\alpha_{2}}^{(2)}\\
&\qquad-6 \sum_{k=-3}^{3n-3} \lambda^{-k} \sum_{\alpha_{1}+\alpha_{2}+\alpha_{3}=k} \psi_{\alpha_{1}}^{(1)}\psi_{\alpha_{2}}^{(1)}\psi_{\alpha_{3}}^{(2)}+\sum_{k=-4}^{4n-4} \lambda^{-k} \sum_{\alpha_{1}+\alpha_{2}+\alpha_{3}+\alpha_{4}=k} \psi_{\alpha_{1}}^{(1)}\psi_{\alpha_{2}}^{(1)}\psi_{\alpha_{3}}^{(1)}\psi_{\alpha_{4}}^{(1)}\\
&\qquad+V(x)-\lambda\\
&\eqqcolon \sum_{k=-4}^{4n-4} \lambda^{-k} \phi_{k+3}.
\end{align*}
Here the function $\phi_{k}$ with $k \in [[-4,4n-4]]$ are naturally defined by grouping together the terms attached with the same order of $\lambda$, with the exception of $V(x)-\lambda$ which we include in the leading order term:
\begin{align*}
&\lambda^{4}: && \left(\psi_{-1}^{(1)}\right)^{4} + \frac{V(x)-\lambda}{\lambda^4} \eqqcolon\phi_{-1},\\
&\lambda^{3}: &&4 \left(\psi_{-1}^{(1)}\right)^{3}\psi_{0}^{(1)}-6 \left(\psi_{-1}^{(1)}\right)^2 \psi_{-1}^{(2)}\eqqcolon\phi_{0},\\
&\lambda^{2}: &&4 \left(\psi_{-1}^{(1)}\right)^{3}\psi_{1}^{(1)}+6 \left(\psi_{-1}^{(1)}\right)^{2}\left(\psi_{0}^{(1)}\right)^2-6 \left(\psi_{-1}^{(1)}\right)^2 \psi_{0}^{(2)}\\
& &&\hspace{2.5 cm}-12 \psi_{-1}^{(1)}\psi_{0}^{(1)} \psi_{-1}^{(2)}+3\left(\psi_{-1}^{(2)}\right)^2+4\psi_{-1}^{(1)}\psi_{-1}^{(3)}\eqqcolon\phi_{1},\\
&\ldots
\end{align*}
For $k \in [[-3,4n-4]]$, the formulae can be written as
\begin{equation}\label{Eq Phi k+3}
\begin{aligned}
&- \psi_{k}^{(4)}+4 \sum_{\alpha_{1}+\alpha_{2}=k} \psi_{\alpha_{1}}^{(1)}\psi_{\alpha_{2}}^{(3)}+3\sum_{\alpha_{1}+\alpha_{2}=k} \psi_{\alpha_{1}}^{(2)}\psi_{\alpha_{2}}^{(2)}\\
&-6  \sum_{\alpha_{1}+\alpha_{2}+\alpha_{3}=k} \psi_{\alpha_{1}}^{(1)}\psi_{\alpha_{2}}^{(1)}\psi_{\alpha_{3}}^{(2)}+ \sum_{\alpha_{1}+\alpha_{2}+\alpha_{3}+\alpha_{4}=k} \psi_{\alpha_{1}}^{(1)}\psi_{\alpha_{2}}^{(1)}\psi_{\alpha_{3}}^{(1)}\psi_{\alpha_{4}}^{(1)}\eqqcolon\phi_{k+3},
\end{aligned}
\end{equation}
with the convention that $\psi_{\alpha}=0$ if $\alpha\leq -2$ or $\alpha\geq n$.

Given $n\in \N_{1}$, requiring $\phi_{k}=0$ for all $k \in [[-1,n-1]]$, we obtain $n+1$ ODEs which can be solved explicitly to find all $\left(\psi_{k}^{(1)}\right)_{k\in[[-1,n-1]]}$ by a recursion formula
\begin{align}
\psi_{-1}^{(1)} &= \pm i \lambda^{-1} \left(\lambda-V\right)^{1/4},\label{Eq Eikonal Solution}\\
\psi_{k+1}^{(1)} &= \frac{1}{4(\psi_{-1}^{(1)})^3}\left(\psi_{k-2}^{(4)}-4 \sum_{\alpha_{1}+\alpha_{2}=k-2} \psi_{\alpha_{1}}^{(1)}\psi_{\alpha_{2}}^{(3)}-3\sum_{\alpha_{1}+\alpha_{2}=k-2} \psi_{\alpha_{1}}^{(2)}\psi_{\alpha_{2}}^{(2)}\right.\nonumber \\
&\left. \hspace{2 cm} +6\sum_{\alpha_{1}+\alpha_{2}+\alpha_{3}=k-2} \psi_{\alpha_{1}}^{(1)}\psi_{\alpha_{2}}^{(1)}\psi_{\alpha_{3}}^{(2)}- \sum_{\substack{\alpha_{1}+\alpha_{2}+\alpha_{3}+\alpha_{4}=k-2\\ \alpha_{1},\alpha_{2},\alpha_{3},\alpha_{4} \neq k+1}} \psi_{\alpha_{1}}^{(1)}\psi_{\alpha_{2}}^{(1)}\psi_{\alpha_{3}}^{(1)}\psi_{\alpha_{4}}^{(1)}\right),\label{Eq Transport Solution}
\end{align}
for $k\in [[-1,n-2]]$, with the convention that $\psi_{\alpha}=0$ if $\alpha\leq -2$ or $\alpha\geq n$. After solving these ODEs, the WKB remainder is
\begin{equation}\label{Eq Reminder R}
\mathcal{R}_{\lambda,n} = \sum_{k=n-3}^{4n-4}\lambda^{-k}\phi_{k+3}, \qquad n\in \N_{1}.
\end{equation}
Since $\lambda-V(x)$ is a complex-valued function, the forth root appearing in \eqref{Eq Eikonal Solution} is considered as the principal branch of the forth root which is defined as
\begin{equation}\label{Eq Forth Root}
z^{\frac{1}{4}}  = \frac{1}{\sqrt{2}}\left( \vert z \vert^{1/2} + \frac{1}{\sqrt{2}}\left(\vert z \vert+ \mathrm{Re}\, z \right)^{1/2}\right)^{1/2}+ \frac{i}{2}\frac{\mathrm{Im}\, z}{\left( \vert z \vert+ \mathrm{Re}\, z \right)^{1/2}\left(\vert z \vert^{1/2}+\frac{1}{\sqrt{2}}\left(\vert z \vert+\mathrm{Re}\, z \right)^{1/2}\right)^{1/2}}.
\end{equation}
Although there are four solutions for the eikonal equation, that is $\pm \lambda^{-1} \left(\lambda-V\right)^{1/4}$ and $\pm i \lambda^{-1} \left(\lambda-V\right)^{1/4}$, but only the latter are suitable for our pseudomodes. The choice of the sign in the definition of $\psi_{-1}^{(1)}$ will be determined by the sign of $\Im V$ at infinity (see Remark \ref{Remark sign}).
\subsection{Structure of solutions of the transport equations and the WKB remainder}
From now on, we assume that we are dealing with the plus sign in the formula of $\psi_{-1}$ in~\eqref{Eq Eikonal Solution}, unless otherwise stated. Let us list some first solutions of the first transport equations, $\psi_{k}^{(1)}$ for $k\geq 0$, to see which structure they are equipped with:
\begin{align*}
\psi_{0}^{(1)}  = - \frac{3}{8} \frac{V^{(1)}}{V_{\lambda}},\qquad\psi_{1}^{(1)}  = \frac{i\lambda}{V_{\lambda}^{(1/4)}}\left(\frac{5}{16} \frac{V^{(2)}}{V_{\lambda}} +\frac{45}{128} \frac{(V^{(1)})^2}{V_{\lambda}^2}\right).
\end{align*}
If we continue, we will see that $\psi_{2}^{(1)}$ and $\psi_{3}^{(1)}$ has the form
\begin{align*}
&\psi_{2}^{(1)}=\frac{\lambda^2}{V_{\lambda}^{\frac{2}{4}}}\left\{\frac{V^{(3)}}{V_{\lambda}},\frac{V^{(1)}V^{(2)}}{V_{\lambda}^2},\frac{\left(V^{(1)}\right)^3}{V_{\lambda}^3} \right\},\\
&\psi_{3}^{(1)}=\frac{\lambda^3}{V_{\lambda}^{\frac{3}{4}}}\left\{\frac{V^{(4)}}{V_{\lambda}},\frac{V^{(1)}V^{(3)}}{V_{\lambda}^2}, \frac{\left(V^{(2)}\right)^2}{V_{\lambda}^2},\frac{\left(V^{(1)}\right)^2V^{(2)}}{V_{\lambda}^3}, \frac{\left(V^{(1)}\right)^4}{V_{\lambda}^4} \right\},
\end{align*}
where the bracket denotes a linear combination of all elements in the bracket with complex coefficients.  To estimate the transport solutions later, the coefficients attached with  these elements are not important, instead the structure they share together is essential. That is: for each $k\in\N_{1}$, each element in the bracket of $\psi_{k}^{(1)}$ has the form
\[ \frac{(V^{(1)})^{\alpha_1} (V^{(2)})^{\alpha_2}...(V^{(s)})^{\alpha_{s}}}{V_{\lambda}^{j}},\]
in which $s=k+1-j$ and all $\left(\alpha_{i}\right)_{i\in [[1,s]]} \in \N_{0}^{s}$ satisfy
\[ \alpha_{1}+\alpha_{2}+\ldots+\alpha_{s}=j,\qquad 1\alpha_{1}+2\alpha_{2}+\ldots+s\alpha_{s}=k+1. \]
This is the content of the following lemma, but first, some notations should be introduced.
\begin{notation}\label{Nota Derivative}
For $j,r\in \N_{0}$ such that $j\leq r$, we employ the following notations
\begin{equation*}
D_{r,j}:=\left\{\displaystyle \sum_{\alpha \in \mathcal{I}_{r,j}} c_{\alpha} (V^{(1)})^{\alpha_1} (V^{(2)})^{\alpha_2}...(V^{(r-j+1)})^{\alpha_{r-j+1}}  : c_{\alpha}\in \C\right\},
\end{equation*}
where
\begin{equation}\label{Notation Irj}
\mathcal{I}_{r,j}:= \left\{ \alpha \in \N_{0}^{r-j+1}:\sum_{p=1}^{r-j+1} p \alpha_{p} = r \text{ and }  \sum_{p=1}^{r-j+1} \alpha_{p} =j \right\}\,.
\end{equation}
When $\mathcal{I}_{r,j} =\emptyset$, we make a convention that $D_{r,j}=\{0\}$. Thus, $D_{r,0}=\{0\}$ if $r\geq 1$.
\end{notation}
\begin{lemma}\label{Lem Transport Solution}
Let $n\in \N_{0}$, $V\in W_{\mathrm{loc}}^{n+3,2}(\R)$ and functions $\left(\psi_{k}^{(1)}\right)_{k\in [[-1,n-1]]}$ be determined by~\eqref{Eq Eikonal Solution} and \eqref{Eq Transport Solution}. When $x\in \R$ such that $V_{\lambda}(x) \in \C \setminus \R_{0}^{-}$, then we have
\begin{equation}
\psi_{k}^{(m)}(x) = \frac{\lambda^{k}}{V_{\lambda}(x)^{\frac{k}{4}}} \sum_{j=0}^{k+m} \frac{d_{k+m,j}(x)}{V_{\lambda}(x)^{j}},\qquad m\in [[1,n+3-k]], \text{ with } d_{r,j} \in D_{r,j}.
\end{equation}
Moreover, $d_{r,0}=0$ if $r\geq 1$.
\end{lemma}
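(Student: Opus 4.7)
\medskip
\noindent\textbf{Proof plan.} The plan is to prove the lemma by strong induction on $k$ from $k=-1$ up to $k=n-1$, with the inductive step using the recursion \eqref{Eq Transport Solution} to construct $\psi_{k+1}^{(1)}$ from the $\psi_j^{(\cdot)}$ with $j\leq k$, and an auxiliary induction on $m$ to promote the $m=1$ form to arbitrary $m$. The engine of the argument is two elementary algebraic facts about the spaces $D_{r,j}$ of Notation~\ref{Nota Derivative}:
(i) a \emph{multiplication rule} $D_{r_1,j_1}\cdot D_{r_2,j_2}\subseteq D_{r_1+r_2,\,j_1+j_2}$, immediate from adding the multi-indices of the defining monomials $\prod_p (V^{(p)})^{\alpha_p}$; and
(ii) a \emph{differentiation rule} $\frac{\dd}{\dd x}D_{r,j}\subseteq D_{r+1,j}$, which follows from the product rule applied monomial-wise: each term of the derivative decreases some $\alpha_p$ by one and raises $\alpha_{p+1}$ by one, so $\sum\alpha_p=j$ is preserved while $\sum p\alpha_p$ increases from $r$ to $r+1$. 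Combined with the trivial observation $V^{(1)}\in D_{1,1}$, these rules feed every subsequent computation.

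\medskip
\noindent\textbf{Differentiation step.} First I would prove the following shape-preserving lemma: if
\[
\Phi=\frac{\lambda^{k}}{V_\lambda^{k/4}}\sum_{j=0}^{r}\frac{d_{r,j}}{V_\lambda^{j}}, \qquad d_{r,j}\in D_{r,j},\ d_{r,0}=0\ \text{for}\ r\geq 1,
\]
then $\Phi^{(1)}$ has the same form with $r$ replaced by $r+1$. One computes by the product rule and by $V_\lambda^{(1)}=-V^{(1)}$, producing three contributions: differentiating $V_\lambda^{-k/4}$ yields a factor $\tfrac{k}{4}\,V^{(1)}/V_\lambda$, differentiating $d_{r,j}$ lifts it into $D_{r+1,j}$, and differentiating $V_\lambda^{-j}$ yields a factor $j\,V^{(1)}/V_\lambda$. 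Each contribution lands in $\frac{\lambda^k}{V_\lambda^{k/4}}\sum_{j'=0}^{r+1}\frac{e_{r+1,j'}}{V_\lambda^{j'}}$ with $e_{r+1,j'}\in D_{r+1,j'}$, by the multiplication rule applied to the factor $V^{(1)}\in D_{1,1}$. The vanishing $e_{r+1,0}=0$ is automatic: the first and third contributions produce only $j'\geq 1$ terms, while the $j'=0$ part of the second contribution is $(d_{r,0})^{(1)}$, which is zero either because $d_{r,0}=0$ (for $r\geq 1$) or because $d_{0,0}$ is a constant.

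\medskip
\noindent\textbf{Base case and inductive step.} The base case $k=-1$, $m=1$ is simply $\psi_{-1}^{(1)}=i\lambda^{-1}V_\lambda^{1/4}=\frac{\lambda^{-1}}{V_\lambda^{-1/4}}\cdot i$, with $i\in \C=D_{0,0}$; the statement for $m\in[[2,n+4]]$ then follows by iterating the differentiation lemma. For the inductive step, assume the lemma holds for all $\psi_j^{(\cdot)}$ with $j\leq k$. Each summand on the right-hand side of \eqref{Eq Transport Solution} is either $\psi_{k-2}^{(4)}$ (handled by four applications of the differentiation lemma) or a product $\psi_{\alpha_1}^{(p_1)}\cdots\psi_{\alpha_s}^{(p_s)}$ whose subscripts sum to $K$ and whose orders of differentiation $p_i$ total to a value such that $K+\sum p_i=k+2$. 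By induction each factor equals $\frac{\lambda^{\alpha_i}}{V_\lambda^{\alpha_i/4}}\sum_j \frac{d_{\alpha_i+p_i,j}}{V_\lambda^{j}}$; the multiplication rule then yields a total of the form $\frac{\lambda^{K}}{V_\lambda^{K/4}}\sum_{j}\frac{d_{k+2,j}}{V_\lambda^{j}}$ with $d_{k+2,j}\in D_{k+2,j}$. Finally, multiplication by $1/(4(\psi_{-1}^{(1)})^3)=\frac{i}{4}\lambda^{3}V_\lambda^{-3/4}$ converts the prefactor into exactly $\frac{\lambda^{k+1}}{V_\lambda^{(k+1)/4}}$, giving the desired form for $\psi_{k+1}^{(1)}$.

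\medskip
\noindent\textbf{Vanishing at $j=0$ and main obstacle.} It remains to verify $d_{k+2,0}=0$. A nonzero contribution at $j=0$ in the product $\prod_i d_{\alpha_i+p_i,0}$ requires every factor to carry a nonzero $j=0$ piece, i.e.\ $\alpha_i+p_i=0$ for each $i$; since $\alpha_i\geq -1$ and $p_i\geq 1$, this forces $\alpha_i=-1,p_i=1$ for every $i$ and thus $k+2=K+\sum p_i=0$, contradicting $k+2\geq 1$ at the inductive step. The higher derivatives $\psi_{k+1}^{(m)}$, $m\geq 2$, then follow from further applications of the differentiation lemma. The main obstacle is not conceptual but rather the careful multi-index bookkeeping required to match the degree parameter $r=k+m$ across differentiation, multiplication, and division by $(\psi_{-1}^{(1)})^3$; once the two algebraic rules for $D_{r,j}$ are in place, all identities reduce to adding exponents.
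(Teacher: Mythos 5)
Your proof is correct, and your route is genuinely cleaner than the paper's while achieving the same conclusion by the same induction on $k$. Where the paper invokes Fa\`a di Bruno's formula twice -- once to compute all derivatives of $\psi_{-1}^{(1)}=i\lambda^{-1}V_\lambda^{1/4}$ in the base case, and again to compute all derivatives of $(\psi_{-1}^{(1)})^{-3}$ inside a Leibniz expansion for the inductive step that handles all $m$ simultaneously -- you instead isolate a single \emph{differentiation lemma}, namely that $\tfrac{\dd}{\dd x}D_{r,j}\subseteq D_{r+1,j}$ together with a shape-preserving statement for the ansatz $\tfrac{\lambda^k}{V_\lambda^{k/4}}\sum_j d_{r,j}V_\lambda^{-j}$. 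This lets you verify only the $m=1$ case of the inductive step from the recursion~\eqref{Eq Transport Solution}, and then promote to all $m$ by iterating the lemma; and it renders the divisor $1/(4(\psi_{-1}^{(1)})^3)=\tfrac{i\lambda^3}{4}V_\lambda^{-3/4}$ a mere rescaling of the prefactor rather than an object whose higher derivatives need tracking. Your bookkeeping $K+\sum p_i = k+2$ for every summand in the recursion, followed by multiplication by $\lambda^3 V_\lambda^{-3/4}$, yields exactly $\tfrac{\lambda^{k+1}}{V_\lambda^{(k+1)/4}}\sum_{j=0}^{k+2}d_{k+2,j}V_\lambda^{-j}$, matching the target; and the argument for $d_{r,0}=0$ by forcing $\alpha_i+p_i=0$ for each factor is sound. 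The two approaches buy the same result; yours trades the combinatorics of Bell polynomials for a reusable closure property, which is arguably the more modular and transparent way to organize this verification.
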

The condition that the range of $V_{\lambda}$ needs to stay away from $(-\infty,0]$ is added to ensure that $V_{\lambda}^{\frac{1}{4}}$ is well-defined (\emph{i.e.} non-multi-valued) and differentiable inherited from the differentiability of $V$ (since the principal branch of forth root is holomorphic on $\C\setminus (-\infty,0]$).
From \eqref{Remainder 0} and  \eqref{Eq Eikonal Solution}, the remainder when we solve up to $\psi_{-1}$ can be calculated explicitly:
\begin{equation}\label{R0}
\begin{aligned}
& \mathcal{R}_{\lambda,0}(x)= i V_{\lambda}(x)^{1/4} \left( \frac{1}{4}\frac{V^{(3)}(x)}{V_{\lambda}(x)}+\frac{9}{16}\frac{V^{(1)}(x)V^{(2)}(x)}{V_{\lambda}^2(x)}+\frac{21}{64}\frac{(V^{(1)}(x))^3}{V_{\lambda}^3(x)}\right)\\
&\hspace{1.6 cm}+V_{\lambda}(x)^{2/4}\left(\frac{V^{(2)}(x)}{V_{\lambda}(x)}+\frac{9}{16}\frac{(V^{(1)}(x))^2}{V_{\lambda}(x)^2}\right)+i V_{\lambda}(x)^{3/4} \left(-\frac{3}{2}\frac{V^{(1)}(x)}{V_{\lambda}(x)}\right).
\end{aligned}
\end{equation}
We see that the remainder contains the elements which still share the structure mentioned above. Thanks to the recursion formula \eqref{Eq Transport Solution}, we can show by induction that the shape of the remainder $\mathcal{R}_{\lambda,n}$  can be written by means of Notation \ref{Nota Derivative} as follows:
\begin{lemma}\label{Lem Reminder Estimate}
Let $n\in\N_{0}$, $V\in W_{\mathrm{loc}}^{n+3,2}(\R)$ and functions $\left(\psi_{k}^{(1)}\right)_{k\in [[-1,n-1]]}$ be determined by~\eqref{Eq Eikonal Solution} and \eqref{Eq Transport Solution}, $\left(\phi_{k}\right)_{k\in [[-1,4n-1]]}$ be as in \eqref{Eq Phi k+3} and $R_{\lambda,n}$ as in \eqref{Eq Reminder R}.  When $x\in \R$ such that $V_{\lambda}(x) \in \C \setminus \R_{0}^{-}$, then the maximal order derivative of $V$ in $\mathcal{R}_{\lambda,n}$ is $n+3$ and 
\begin{equation*}
\begin{aligned}
&\mathcal{R}_{\lambda,0}(x)=\sum_{k=0}^{2}\frac{1}{V_{\lambda}(x)^{\frac{k-3}{4}}} \sum_{j=1}^{k+1}\frac{d_{k+1,j}(x)}{V_{\lambda}^{j}(x)},\\
&\mathcal{R}_{\lambda, n}(x)=\sum_{k=0}^{3n-1}\frac{1}{V_{\lambda}(x)^{\frac{k+n-3}{4}}} \sum_{j=1}^{k+n+1}\frac{d_{k+n+1,j}(x)}{V_{\lambda}^{j}(x)},\qquad n\geq 1, \text{ with } d_{r,j} \in D_{r,j}.
\end{aligned}
\end{equation*}
\end{lemma}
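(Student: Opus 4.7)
The plan is to verify the base case $n=0$ by direct inspection of \eqref{R0} and, for $n\geq 1$, to extract the claimed structure from \eqref{Eq Reminder R}--\eqref{Eq Phi k+3} by plugging in Lemma~\ref{Lem Transport Solution}. For the base case one reads off \eqref{R0}: the three macro-terms have prefactor $V_\lambda^{(3-k)/4}$ for $k=2,1,0$ respectively, and each bracket is a sum of fractions of the form $V_\lambda^{-j}(V^{(1)})^{\alpha_1}(V^{(2)})^{\alpha_2}(V^{(3)})^{\alpha_3}$ whose multi-indices satisfy $\alpha_1+2\alpha_2+3\alpha_3=k+1$ and $\alpha_1+\alpha_2+\alpha_3=j$ for some $j\in[[1,k+1]]$, so each numerator belongs to $D_{k+1,j}$ exactly as claimed.

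For $n\geq 1$, I would analyse a single product $\psi_{a_1}^{(m_1)}\cdots \psi_{a_p}^{(m_p)}$ occurring inside $\phi_{k+3}$. The decisive observation is that all five types of contributions in \eqref{Eq Phi k+3} share two features: $\sum_i a_i=k$ by construction of the grouping, and $\sum_i m_i=4$ by direct inspection (the total derivative order is $4$ in every one of the five monomial types). Substituting Lemma~\ref{Lem Transport Solution},
\begin{equation*}
\psi_{a_i}^{(m_i)}=\lambda^{a_i} V_\lambda^{-a_i/4}\sum_{j_i=0}^{a_i+m_i} V_\lambda^{-j_i}\,d_{a_i+m_i,j_i},\qquad d_{a_i+m_i,j_i}\in D_{a_i+m_i,j_i},
\end{equation*}
and multiplying the $p$ factors collapses the powers into $\lambda^k V_\lambda^{-k/4}\sum_{J=0}^{k+4} V_\lambda^{-J}\,\tilde d_{k+4,J}$, where $\tilde d_{k+4,J}$ is a linear combination of products $\prod_i d_{a_i+m_i,j_i}$ taken over tuples with $\sum_i j_i=J$. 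Such products lie in $D_{k+4,J}$ because the weight and count conditions of Notation~\ref{Nota Derivative} are both additive when one multiplies monomials from different $D_{r_i,j_i}$ classes; here the weights sum to $\sum_i(a_i+m_i)=k+4$ and the counts to $J$.

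Multiplying by the outer $\lambda^{-k}$ from \eqref{Eq Reminder R} cancels the $\lambda^k$ just produced, so each $\lambda^{-k}\phi_{k+3}$ takes the shape $V_\lambda^{-k/4}\sum_{J=0}^{k+4} V_\lambda^{-J}\,\tilde d_{k+4,J}$. Lemma~\ref{Lem Transport Solution} gives $d_{r,0}=0$ whenever $r\geq 1$, so the $J=0$ contribution drops and the sum effectively runs over $J\in[[1,k+4]]$. Reindexing $k=k'+n-3$ with $k'\in[[0,3n-1]]$, and noting $k+4=k'+n+1$ and $k/4=(k'+n-3)/4$, yields precisely the formula stated in the lemma with $d_{k'+n+1,J}\in D_{k'+n+1,J}$.

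Finally, for the bound on the maximum derivative order, the $\psi_k^{(4)}$ contribution to $\phi_{k+3}$ (with $k\leq n-1$ by the convention $\psi_\alpha=0$ for $\alpha\geq n$) yields, via Lemma~\ref{Lem Transport Solution}, monomials in $V^{(r)}$ with $r\leq k+4\leq n+3$, and the value $n+3$ is attained at $k=n-1$, $J=1$. Every other product contribution has $p\geq 2$ factors, each with $m_i\leq 3$, so the per-factor derivative order is bounded by $a_i+m_i\leq (n-1)+3=n+2$, strictly below $n+3$. I expect the main obstacle to be the combinatorial bookkeeping showing that products of $D_{r_i,j_i}$-monomials land cleanly in $D_{k+4,J}$, together with the careful accounting of the index shifts $k\leftrightarrow k'$ and the weight $r=k'+n+1$; once these are handled, the rest reduces to mechanical substitution.
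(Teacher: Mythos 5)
Your proof is correct and follows essentially the same route as the paper: both rest on Lemma~\ref{Lem Transport Solution} plus the algebraic rules for the classes $D_{r,j}$ (additivity under products, vanishing of $D_{r,0}$), followed by the reindexing $k\mapsto k+n-3$. The one presentational difference is that you exploit the uniform structure $\sum_i a_i=k$ and $\sum_i m_i=4$ across all five monomial types in $\phi_{k+3}$ and do a single computation, whereas the paper splits $\mathcal{R}_{\lambda,n}$ into $S_1,\dots,S_5$ and estimates each separately; your version is a modest streamlining of the same argument, and your max-derivative bookkeeping (attained at $\psi_{n-1}^{(4)}$, $j=1$; strictly smaller for $p\geq 2$ since $a_i+m_i\leq n+2$) also agrees with the paper's.
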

The proofs of these lemmata is postponed to Appendix \ref{Appendix}.
\section{Pseudomodes for large real pseudoeigenvalues}\label{Section Proof of theorem 1}
We reserve this section for proving Theorem \ref{Theorem 1}. In other words, we are going to construct the pseudomode for the perturbed biharmonic operator when the real part of the spectral parameter $\lambda$ is considered largely.

From the assumptions \eqref{Assump G Der} and \eqref{Box B}, there exist constants $a_{\pm}>0$ such that, for all $\beta\in B=[-\beta_{-},\beta_{+}]$,
\begin{equation}\label{Assump G Im V 2}
\begin{aligned}
&\Im V(x)-\beta\lesssim  \Im V(x)\lesssim -1,\qquad && x\in I^{-}\coloneqq \left\{ x\in \R_{0}^{-}: x\leq -a_{-}\right\},\\
&\Im V(x)-\beta \gtrsim \Im V(x)\gtrsim 1,\qquad && x\in I^{+}\coloneqq \left\{ x\in \R_{0}^{+}: x\geq a_{+}\right\}.
\end{aligned}
\end{equation} 
Notice that the constants in the above notations $\lesssim$ and $\gtrsim$ are uniformly on $\beta$. Possibly considering larger $a_{\pm}$, we assume that all the remain assumptions of Assumption~\ref{Assumption 1} also happen on $I^{\pm}$. Furthermore, two constants $a_{\pm}$ will be used as universal constants in this section, i.e. the size of $a_{\pm}$ can be changed a finite number of times but we still keep denoting them as $a_{\pm}$.
\subsection{Shapes of the cut-off functions}
The cut-off function is employed to complete two tasks in our construction of pseudomodes: first, by attaching with the functions created by WKB method, the cut-off make them belong to the domain of the operator; second, the cut-off makes $(\lambda-V)^{\frac{1}{4}}$ well-defined (\emph{i.e.} non-multi-valued) and differentiable in its support. In the latter, the differentiability of $(\lambda-V)^{\frac{1}{4}}$ comes from the analyticity of the forth root on $\C\setminus (-\infty,0]$ and the regularity of $V$ with the requirement that $\lambda-V(x)\in \C\setminus (-\infty,0]$ on the support of the cut-off. Let us denote by $\xi: \R \to [0,1]$ the cut-off function satisfying the following properties
\begin{equation}\label{Cutoff 1}
\left\{
\begin{aligned}
&\xi \in \mathcal{C}_{0}^{\infty}(\R),\\
&\xi(x)=1 \qquad \text{on } \left(-\delta_{\alpha}^{-}+\Delta_{\alpha}^{-}, \delta_{\alpha}^{+}-\Delta_{\alpha}^{+}\right),\\
&\xi(x)=0 \qquad \text{on } \R\setminus \left(-\delta_{\alpha}^{-},\delta_{\alpha}^{+}\right),
\end{aligned}\right.
\end{equation}
where $\delta_{\alpha}^{\pm}$ and $\Delta_{\alpha}^{\pm} <\delta_{\alpha}^{\pm}$ are $\alpha$-dependent positive numbers which will be determined later. Notice that the cut-off $\xi$ can be selected in such a way that
\begin{equation}\label{Eq cutoff Delta}
\Vert \xi^{(j)} \Vert_{L^{\infty}(\R_{\pm})} \lesssim \left(\Delta_{\alpha}^{\pm}\right)^{-j},\qquad j\in [[1,4]].
\end{equation}
To simplify the notation, we introduce the following sets
$$
\begin{aligned}
&J_{\alpha}^{-}:=\left[-\delta_{\alpha}^{-},0\right],\qquad &&J_{\alpha}^{+}:=\left[0,\delta_{\alpha}^{+}\right], \qquad &&&J_{\alpha}:= J_{\alpha}^{-}\cup J_{\alpha}^{+}.
\end{aligned}
$$
We use the following lemma (see the proof in \cite[Lemma 3.1]{KN20}) to define the boundary $\delta_{\alpha}^{\pm}$ of the cut-off $\xi$.
\begin{lemma}\label{Lemma alpha}
Let $g:\R_{0}^{+} \to \R_{0}^{+} $ be a continuous function and let $\alpha$ be a positive number, we define 
\begin{equation*}
\delta(\alpha) : = \inf\left\{ x\geq 0: g(x)=\alpha\right\} .
\end{equation*}
Then $\delta(\alpha)$ can be infinite $(\inf \emptyset = +\infty)$, however, when $g$ is unbounded at $+\infty$ and for all sufficiently large $\alpha>0$, the number $\delta(\alpha)$ is finite and
\begin{equation*}
\lim_{\alpha\to +\infty} \delta(\alpha)=+\infty.
\end{equation*}
Furthermore, if $\alpha>g(0)$ then
\begin{equation*}
 g(x)\leq \alpha,
 \qquad x\in [0,\delta(\alpha)].
\end{equation*}
\end{lemma}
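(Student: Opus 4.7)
The plan is to handle the three assertions of the lemma in order, relying only on continuity of $g$ and the Intermediate Value Theorem. Throughout I will write $S_\alpha := \{x \geq 0 : g(x) = \alpha\}$ so that $\delta(\alpha) = \inf S_\alpha$, with the convention $\inf \emptyset = +\infty$.

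First I would show that $\delta(\alpha) < +\infty$ for all sufficiently large $\alpha$, under the unboundedness assumption. Since $g$ is unbounded at $+\infty$, there is a sequence $x_n \to +\infty$ with $g(x_n) \to +\infty$. Fix $\alpha > g(0)$ large enough that $g(x_n) > \alpha$ for some $n$. Then $g(0) < \alpha < g(x_n)$, and by the IVT applied to the continuous function $g$ on $[0, x_n]$ there exists $x \in (0, x_n)$ with $g(x) = \alpha$. Hence $S_\alpha \neq \emptyset$ and $\delta(\alpha)$ is finite.

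Next I would establish $\lim_{\alpha \to +\infty} \delta(\alpha) = +\infty$ by contradiction. Suppose there were a constant $M > 0$ and a sequence $\alpha_n \to +\infty$ with $\delta(\alpha_n) \leq M$. By definition of infimum combined with continuity of $g$, one has $g(\delta(\alpha_n)) = \alpha_n$ (the set $S_{\alpha_n}$ is closed in $[0,+\infty)$ as the preimage of $\{\alpha_n\}$, so its infimum is attained). This forces $\alpha_n \leq \sup_{[0,M]} g$, which is finite by continuity on the compact interval $[0,M]$, contradicting $\alpha_n \to +\infty$.

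Finally I would prove the monotonic confinement statement: if $\alpha > g(0)$, then $g(x) \leq \alpha$ for every $x \in [0, \delta(\alpha)]$. By continuity, $g(\delta(\alpha)) = \alpha$, so the inequality at the endpoint $x = \delta(\alpha)$ is automatic. For $x \in [0, \delta(\alpha))$, suppose for contradiction that $g(x) > \alpha$. Since $g(0) < \alpha < g(x)$, the IVT on $[0, x]$ produces some $y \in (0, x) \subset [0, \delta(\alpha))$ with $g(y) = \alpha$, contradicting the minimality of $\delta(\alpha)$ as the infimum of $S_\alpha$. The only step requiring any care is the attainment $g(\delta(\alpha)) = \alpha$; nothing else is more than routine continuity and IVT manipulation, so I expect no real obstacle.
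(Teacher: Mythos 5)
Your proof is correct and uses the natural elementary approach one would expect here: the Intermediate Value Theorem gives the existence of a solution once $\alpha$ exceeds $g(0)$ (using unboundedness to guarantee $g$ exceeds $\alpha$ somewhere), closedness of the preimage $g^{-1}(\{\alpha\})$ gives attainment of the infimum $g(\delta(\alpha))=\alpha$, compactness of $[0,M]$ gives the contradiction needed for $\delta(\alpha)\to+\infty$, and a second IVT argument gives the confinement $g\leq\alpha$ on $[0,\delta(\alpha))$. The paper does not reproduce the argument but refers to the proof of the identical lemma in the Dirac paper, and your steps are the standard ones that appear there; the only small stylistic remark is that in the ``furthermore'' part you silently assume $\delta(\alpha)<\infty$ when writing $g(\delta(\alpha))=\alpha$, whereas if $\delta(\alpha)=+\infty$ the same IVT argument you give for $x<\delta(\alpha)$ already yields $g<\alpha$ on all of $\R_0^+$, so no separate endpoint check is needed in that degenerate case.
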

We consider the following cases:
\begin{enumerate}[label=\textbf{\alph*)}]
\item When $V$ is unbounded at $\pm \infty$: From the assumption \eqref{Assump Bound Re Im}, we have
\begin{equation*}
 \tau_{\pm}^{4+\varepsilon_{1}}(x)\lesssim \left\vert \Im V(x) \right\vert,\qquad \left\vert \Re V(x) \right\vert^{3+\varepsilon_{1}} \tau_{\pm}^{4}(x)\lesssim \left\vert \Im V(x) \right\vert^4, \qquad  x\in I^{\pm}.
\end{equation*}
By choosing $\eta_{1}\in (0,1)$ such that $4+\varepsilon_{1}=\frac{4}{1-\eta_{1}}$, we will obtain, for all $ x\in I^{\pm}$,
\begin{equation}\label{Re Im 1}
\left\{
\begin{aligned}
&\tau_{\pm}(x)^{\frac{4}{1-\eta_{1}}}\lesssim \left\vert\Im V(x) \right\vert, \\
&\left\vert\Im V(x) \right\vert\lesssim \left\vert \frac{\Im V(x)}{\tau_{\pm}(x)}\right\vert^{\frac{4}{3+\eta_{1}}}, \\
&\left\vert \Re V(x) \right\vert^{\frac{1}{1-\eta_{1}}}\lesssim \left\vert \frac{\Im V(x)}{\tau_{\pm}(x)}\right\vert^{\frac{4}{3+\eta_{1}}}.
\end{aligned}
\right.
\end{equation}
\item When $V$ is bounded at $\pm \infty$, from the assumption \eqref{Assump Bound Above tau}, by choosing $\eta_{2}\in (0,1)$ such that $\frac{1}{3}-\varepsilon_{2} = \frac{1-\eta_{2}}{3+\eta_{2}}$, we have
\begin{equation}\label{tau bound}
\tau_{\pm}(x)\lesssim \vert x \vert^{\frac{1-\eta_{2}}{3+\eta_{2}}},\qquad  x \in I^{\pm}.
\end{equation}
\end{enumerate}
By using Lemma \ref{Lemma alpha}, we can define the boundary of the cut-off $\xi$
\begin{equation}\label{Eq delta Def}
\delta_{\alpha}^{\pm} : = \inf\left\{ x\geq 0: g_{\pm}(x)=\alpha\right\} 
\end{equation}
through defining functions $g_{\pm}:[0,+\infty) \to [0,+\infty)$ as follows
\begin{equation}\label{Function g}
\begin{aligned}
g_{\pm}(x) := \left\{ \begin{aligned}
&\left\vert \frac{\Im V(\pm x)}{\tau_{\pm}(\pm x)}\right\vert^{\frac{4}{3+\eta_{1}}} \qquad &&\text{ if } V \text{ is unbounded at }\pm \infty, \\
&\vert x \vert^{\frac{4}{3+\eta_{2}}}\qquad &&\text{ if } V \text{ is bounded at }\pm \infty.
\end{aligned}
\right.
\end{aligned}
\end{equation}
In the latter case, since $g_{\pm}$ is strictly increasing, we can work out precisely $\delta_{\alpha}^{\pm}$ whose formula is given at Theorem~\ref{Theorem 1}. In the first case, $V$ is continuous since $V\in W^{n+3,\infty}_{\textup{loc}}(\R)$, so are the functions $g_{\pm}$. Furthermore,  thanks to $\tau_{\pm}(0)>0$, the last two conditions in \eqref{Re Im 1}, $g_{\pm}$ is unbounded at $\pm \infty$ and bounded at $0$. In practice, it is not easy to get the exact solution $\delta_{\alpha}^{\pm}$ of the equation $g_{\pm}(x)=\alpha$, instead we may approximate this solution by means of the symbol \enquote{$\approx$} introduced in the Notation \ref{Handy notation} (see Examples~\ref{Example Pol 1} and~\ref{Example Superexp}).

Next, we define the remain ingredient of the cut-off $\xi$ in \eqref{Cutoff 1}, that is $\Delta_{\alpha}^{\pm}$. When $V$ is unbounded at $\pm \infty$, with the aid of \eqref{Tau}, there exist a constant $\kappa>0$ such that, for sufficiently large $\vert x \vert$,
\begin{equation}\label{Eq c}
\frac{\eta}{\tau_{\pm}(x)}\leq \frac{\vert x \vert^{-\nu_{\pm}}}{4}.
\end{equation}
Let us define
\begin{equation}\label{Eq Delta Def}
\Delta_{\alpha}^{\pm}=\left\{
\begin{aligned}
&\frac{\eta}{\tau_{\pm}(\delta_{\alpha}^{\pm})} \qquad &&\text{if } V  \text{ is unbounded at  }\pm \infty,\\
&\frac{1}{4}\delta_{\alpha}^{\pm} \qquad  &&\text{if } V  \text{ is bounded at  }\pm \infty.
\end{aligned}
\right.
\end{equation}
Notice that since $\nu_{\pm}\geq -1$, from \eqref{Eq c}, it implies that $\Delta_{\alpha}^{\pm}\leq \delta_{\alpha}^{\pm}/4$.
\begin{proposition}
For all sufficiently large $\alpha>0$, $\delta_{\alpha}^{\pm}$ are finite and $\displaystyle\lim_{\alpha \to +\infty} \delta_{\alpha}^{\pm}=+\infty$. Furthermore,  the following hold for $\alpha \gtrsim 1$ and for all $\beta\in B$,
\begin{enumerate}[label=\textbf{\arabic*)}]
\item On $J_{\alpha}$, 
\begin{align}
&\left\vert \Re V(x) \right\vert =o(\alpha),\qquad \left\vert \Im V(x) \right\vert\lesssim \alpha, \qquad\vert \lambda-V(x) \vert \approx \alpha, \label{V lambda}\\
&\tau_{\pm}(x) \lesssim \left\{\begin{aligned}
& \alpha^{\frac{1-\eta_{1}}{4}} \qquad &&\text{if } V \text{ is unbounded at }\pm \infty,\\
& \alpha^{\frac{1-\eta_{2}}{4}} \qquad &&\text{if } V \text{ is bounded at }\pm \infty.
\end{aligned}
\right. \label{Eq tau bound above}
\end{align}

\item When $V$ is unbounded at $\pm \infty$, we have
\begin{equation}\label{Eq v assymptotic}
\tau_{\pm}(x) \approx \tau(x),\qquad \Im V(x) \approx \Im V(\pm\delta_{\alpha}^{\pm}) ,\qquad  x\in \R:  \vert x-\delta_{\alpha}^{\pm} \vert\leq  2\Delta_{\alpha}^{\pm}.
\end{equation}
\end{enumerate}
\end{proposition}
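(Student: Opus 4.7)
The plan is to handle the three assertions in order, treating the bounded and unbounded cases of $V$ in parallel and invoking Lemma~\ref{Lemma alpha} at the outset. The functions $g_\pm:\R_0^+\to\R_0^+$ are continuous, so to apply Lemma~\ref{Lemma alpha} it suffices to verify they are unbounded at $+\infty$. In the bounded-$V$ case $g_\pm(x)=|x|^{4/(3+\eta_2)}$, which is trivially unbounded. In the unbounded-$V$ case, at least one of $|\Re V|,|\Im V|$ must blow up along the relevant half-axis, and the second and third lines of \eqref{Re Im 1} then force $g_\pm$ to dominate these quantities, so $g_\pm$ is unbounded. Lemma~\ref{Lemma alpha} delivers finiteness of $\delta_\alpha^\pm$, divergence $\delta_\alpha^\pm\to+\infty$, and the crucial pointwise bound $g_\pm(x)\le\alpha$ on $[0,\delta_\alpha^\pm]$.

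For Part~1, I would split $J_\alpha^+$ at $a_+$. On the compact piece $[0,a_+]$, all quantities are bounded by continuity of $V$ and $\tau_+$, and are trivially dominated by any positive power of $\alpha\gtrsim 1$. On the tail $[a_+,\delta_\alpha^+]\subset I^+$, combine $g_+(x)\le\alpha$ with \eqref{Re Im 1}: the second line gives $|\Im V(x)|\lesssim g_+(x)\le\alpha$; the third gives $|\Re V(x)|^{1/(1-\eta_1)}\lesssim g_+(x)\le\alpha$, whence $|\Re V(x)|\lesssim\alpha^{1-\eta_1}=o(\alpha)$; and the first, combined with $|\Im V(x)|\lesssim\alpha$, yields $\tau_+(x)\lesssim|\Im V(x)|^{(1-\eta_1)/4}\lesssim\alpha^{(1-\eta_1)/4}$. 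In the bounded case $V$ is trivially $O(1)$ and \eqref{tau bound} with $\delta_\alpha^+=\alpha^{(3+\eta_2)/4}$ yields the stated $\tau_+$ bound. The symmetric argument treats $J_\alpha^-$. Finally, the estimate $|\lambda-V(x)|\approx\alpha$ follows from
\begin{align*}
|\lambda-V(x)| &\ge \alpha-|\Re V(x)|\gtrsim\alpha,\\
|\lambda-V(x)| &\le \alpha+|\Re V(x)|+|\beta|+|\Im V(x)|\lesssim\alpha,
\end{align*}
using boundedness of $\beta\in B$.

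For Part~2, observe first that for $\alpha$ large $\delta_\alpha^+\to+\infty$ and \eqref{Eq c} forces $\Delta_\alpha^+\le\delta_\alpha^+/4$, so the interval $\{|x-\delta_\alpha^+|\le 2\Delta_\alpha^+\}$ lies in $[\delta_\alpha^+/2,3\delta_\alpha^+/2]\subset I^+$, where Assumption~\ref{Assumption 1} is active. From \eqref{Tau}, $|(\log\tau_+)'(s)|\lesssim|s|^{\nu_+}$ (no $\tau_+$ on the right-hand side, so no bootstrap is needed), and direct integration yields
\[
\bigl|\log\tau_+(x)-\log\tau_+(\delta_\alpha^+)\bigr|\lesssim\int_{\delta_\alpha^+}^{x}|s|^{\nu_+}\,ds\lesssim \Delta_\alpha^+(\delta_\alpha^+)^{\nu_+}\lesssim 1,
\]
where the last bound uses \eqref{Eq c} when $\nu_+\ge 0$ and exploits $x/\delta_\alpha^+\in[1/2,3/2]$ when $\nu_+=-1$. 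Exponentiating gives $\tau_+(x)\approx\tau_+(\delta_\alpha^+)$. Since $\Im V\gtrsim 1>0$ on $I^+$, \eqref{V1} gives $|(\log\Im V)'(s)|\lesssim\tau_+(s)\lesssim\tau_+(\delta_\alpha^+)$, and integrating together with $\Delta_\alpha^+\tau_+(\delta_\alpha^+)=\eta$ delivers $|\log\Im V(x)-\log\Im V(\delta_\alpha^+)|\lesssim\eta$, so $\Im V(x)\approx\Im V(\delta_\alpha^+)$.

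The main subtlety I anticipate is the integration step in Part~2: the choice of $\eta$ in \eqref{Eq c} is calibrated precisely so that $\Delta_\alpha^+(\delta_\alpha^+)^{\nu_+}$ and $\Delta_\alpha^+\tau_+(\delta_\alpha^+)=\eta$ remain uniformly bounded in $\alpha$, which is what makes the logarithmic-derivative integration close without circularity. A secondary point requiring care is the clean separation of the compact piece $|x|\le a_\pm$, where Assumption~\ref{Assumption 1} is not directly in force, from the asymptotic tail where \eqref{Re Im 1} and \eqref{tau bound} are used.
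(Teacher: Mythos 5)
Your proof is correct and follows essentially the same route as the paper: invoke Lemma~\ref{Lemma alpha} via the continuity and unboundedness of $g_\pm$, split $J_\alpha^\pm$ at $a_\pm$ and use continuity on the compact piece, apply the three consequences \eqref{Re Im 1} of \eqref{Assump Bound Re Im} (resp.\ \eqref{tau bound}) together with $g_\pm\le\alpha$ on the tail, and for Part~2 bound the logarithmic derivatives of $\tau_\pm$ and $\Im V$ and integrate over the short window, with \eqref{Eq c} and $\Delta_\alpha^\pm\tau_\pm(\delta_\alpha^\pm)=\eta$ closing the estimate. The only small imprecision is in your integration bound for $\tau_\pm$: you phrase the case split as ``\eqref{Eq c} when $\nu_+\ge 0$ and $x/\delta_\alpha^+\in[1/2,3/2]$ when $\nu_+=-1$,'' whereas the point is that for any $\nu_+\ge -1$ the factor $|s|^{\nu_+}$ is comparable to $(\delta_\alpha^+)^{\nu_+}$ throughout the window precisely because $s/\delta_\alpha^+\in[1/2,3/2]$, and then \eqref{Eq c} gives $\Delta_\alpha^+(\delta_\alpha^+)^{\nu_+}\lesssim 1$ uniformly; but this is a presentational nuance, not a gap.
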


\begin{proof}
The statements related to the boundary $\delta_{\alpha}^{\pm}$ of the cut-off function $\xi$ is obtained from Lemma~\ref{Lemma alpha}. We will give a proof for $x\geq 0$, the case $x\leq 0$ is analogous.
\begin{enumerate}[label=\textbf{\arabic*)}]
\item All the estimates in \eqref{V lambda} and \eqref{Eq tau bound above} will be claimed on $I^{+}\cap J_{\alpha}^{+}$. In order to have them on $J_{\alpha}^{+}$, the continuity of $V$ and $\tau_{+}$ on $[0,a_{+}]$ is employed.
\begin{enumerate}[label=\textbf{\alph*)}]
\item When $V$ is bounded at $+\infty$, the claim on the real and imaginary part of $V$ in \eqref{V lambda} is obvious. The last one in \eqref{V lambda} is deduced from the triangle inequality
\[ \vert \alpha \vert - \vert V(x)-\beta\vert\leq \left\vert \lambda-V(x)\right\vert\leq  \vert \alpha \vert + \vert V(x)-\beta\vert, \]
and boundedness of $B$ on $\R$ when $\alpha>0$ is considered largely enough. In order to estimate $\tau_{+}(x)$, we apply Lemma \ref{Lemma alpha} with the definition of $g_{+}$ in \eqref{Function g} and the aid of \eqref{tau bound}, for all $x\in I^{+}\cap J_{\alpha}^{+}$,
\[ \tau_{+}(x)^{\frac{4}{1-\eta_{2}}}\lesssim \vert x \vert^{\frac{4}{3+\eta_{2}}}\leq \alpha.\]
\item When $V$ is unbounded at $+\infty$, we apply again Lemma \ref{Lemma alpha} with the definition of $g_{+}$ in \eqref{Function g} and the help of \eqref{Re Im 1}, we have, for all $x\in I^{+}\cap J_{\alpha}^{+}$,
\begin{align*}
\left\vert \Im V(x) \right\vert \lesssim g_{+}(x)\leq  \alpha,\quad \left\vert \Re V(x) \right\vert\lesssim g_{+}(x)^{1-\eta_{1}}\leq  \alpha^{1-\eta_{1}}.
\end{align*}
Then the estimate for $\lambda-V$ in \eqref{V lambda} is followed from
\[\left\vert \alpha-\Re V(x) \right\vert \leq\left\vert \lambda-V(x) \right\vert \leq \left\vert \alpha-\Re V(x)\right\vert +\left\vert \beta-\Im V(x)\right\vert.\] 

The above bound of $\tau_{+}$ in \eqref{Eq tau bound above} is inferred from  \eqref{Re Im 1} and Lemma \ref{Lemma alpha} that, for all $x\in I^{+}\cap J_{\alpha}^{+}$,
\[ \tau_{+}(x)^{\frac{4}{1-\eta_{1}}}\lesssim \left\vert \Im V(x) \right\vert\lesssim g_{+}(x) \leq \alpha. \]
\end{enumerate}
\item First of all, let us show that, for sufficient large $x>0$ and every $\vert h \vert\leq \frac{x^{-\nu_{+}}}{2}$, we have
\[ \tau(x+h)\approx\tau(x).\]
Indeed, from the assumption \eqref{Tau}, 
\begin{align*}
\left\vert \ln \frac{\vert \tau(x+h)\vert}{\vert \tau(x)\vert} \right\vert = \left\vert \int_{x}^{x+h} \frac{\tau^{(1)}(t)}{\tau(t)}\, \dd t\right\vert&\lesssim \left\{ 
\begin{aligned}
& \int_{x}^{x+h} \vert t \vert^{\nu} \, \dd t \qquad &&h\geq 0,\\
& \int_{x+h}^{x} \vert t \vert^{\nu} \, \dd t \qquad &&h\leq 0,
\end{aligned}
\right.\\
&\leq\left\{ 
\begin{aligned}
& h (x+h)^{\nu} \qquad &&h\geq 0,\, \nu \geq 0,\\
& h x^{\nu} \qquad &&h\geq 0,\, \nu < 0,\\
& (-h) x^{\nu} \qquad &&h\leq 0,\, \nu \geq 0,\\
& (-h) (x+h)^{\nu} \qquad &&h\leq 0, \,\nu < 0,\\
\end{aligned}
\right.\\
&\lesssim 1.
\end{align*}
In the last inequality, since $\nu_{+}\geq -1$, we used the observation that, for all $\vert h \vert \leq \frac{x^{-\nu_{+}}}{2}$ and for $x>0$,
\begin{equation*}
\frac{x}{2}\leq x+h \leq \frac{3x}{2}.
\end{equation*}
Then, the first estimate for $\tau_{+}(x)$ is deduced by replacing the above $x$ by $\delta_{\alpha}^{+}$ with the notice that $\displaystyle 2\Delta_{\alpha}^{+}\leq \frac{\left(\delta_{\alpha}^{+}\right)^{-\nu_{+}}}{2}$, thanks to \eqref{Eq c} and the definition of $\Delta_{\alpha}^{+}$. We employ this idea for the function $\Im V$ as following: for large $\delta_{\alpha}^{+}$ and for all $\vert h \vert\leq 2\Delta_{\alpha}^{+}$, we have
\begin{align*}
\left\vert \ln \frac{\vert \Im V(\delta_{\alpha}^{+}+h)\vert}{\vert \Im V(\delta_{\alpha}^{+})\vert} \right\vert = \left\vert \int_{\delta_{\alpha}^{+}}^{\delta_{\alpha}^{+}+h} \frac{\Im V^{(1)}(t)}{\Im V(t)}\, \dd t\right\vert &\leq \left\{
\begin{aligned}
 &\int_{\delta_{\alpha}^{+}}^{\delta_{\alpha}^{+}+h}\tau_{+}(t)\, \dd t &&\text{if } h\geq 0, \\
 &\int_{\delta_{\alpha}^{+}+h}^{\delta_{\alpha}^{+}}\tau_{+}(t)\, \dd t &&\text{if } h\leq 0.
\end{aligned}\right.\\
&\lesssim  \tau_{+}(\delta_{\alpha}^{+}) \Delta_{\alpha}^{+}= \eta.
\end{align*}
Here, in the second inequality, we have used the estimate for $\tau_{+}$ in \eqref{Eq v assymptotic}. Thus, the estimate for $\Im V$ in \eqref{Eq v assymptotic} is followed.
\end{enumerate}
\end{proof}
\subsection{Pseudomode estimate}\label{Section Pseudomode estimate}
Let Assumption \ref{Assumption 1} hold for some $N\in \N_{0}$. Let $\psi_{-1}^{(1)}$ be determined by \eqref{Eq Eikonal Solution} with the plus sign and $\left(\psi_{k}^{(1)}\right)_{k\in[[0,N-1]]}$ be determined by \eqref{Eq Transport Solution}, in order to obtain the primitive functions $\left(\psi_{k}\right)_{k\in [[-1,N-1]]}$, we fix the initial data for them
\[ \psi_{k}(0):=0,\qquad \forall k \in [[-1,N-1]].\]
Let us define the pseudomode for Theorem \ref{Theorem 1} as follows
\[\Psi_{\lambda,N}:=\xi_{\lambda} \exp(-P_{\lambda,N}),\]
where
\begin{itemize}
\item $\xi_{\lambda}$ is the cut-off defined in \eqref{Cutoff 1} with $\delta_{\alpha}^{\pm}$ and $\Delta_{\alpha}^{\pm}$ as in \eqref{Eq delta Def} and \eqref{Eq Delta Def},
\item $\displaystyle P_{\lambda,N}=\sum_{k=-1}^{N-1} \lambda^{-k} \psi_{k}(t)$  defined as in \eqref{Eq P}.
\end{itemize}
With the intention of estimating the pseudomode $\Psi_{\lambda,N}$ later, we firstly provide some estimates for the functions $\left(\psi_{k}^{(1)}\right)_{k\in[[-1,N-1]]}$ in the following lemma.

\begin{lemma}\label{Lemma psi k}
For $\alpha\gtrsim 1$ and for all $\beta\in B$, we have
\begin{equation}\label{Eq Estimate Re -1}
\begin{aligned}
& \mathrm{Re} \,\left(\lambda \psi_{-1}^{(1)}(x)\right)\approx  \frac{\Im V(x)-\beta}{\alpha^{\frac{3}{4}}}\qquad &&\text{on } I^{\pm}\cap J_{\alpha}^{\pm},\\
& \left\vert\mathrm{Re} \,\left(\lambda \psi_{-1}^{(1)}(x) \right)\right\vert\lesssim  \frac{1}{\alpha^{\frac{3}{4}}}\qquad &&\text{on }  [-a_{-},a_{+}],
\end{aligned}
\end{equation}
and for all $k\in [[-1,N-1]]$, for all $m\in [[1,3]]$ such that $k+m\geq 1$, we have
\begin{equation}\label{Eq Estimate Re k}
\begin{aligned}
&\left\vert \lambda^{-k} \psi_{k}^{(m)}(x) \right\vert \lesssim \frac{\vert V(x) \vert \tau_{\pm}(x)^{k+m}}{\alpha^{\frac{k}{4}+1}}\qquad &&\text{ on }  I^{\pm}\cap J_{\alpha}^{\pm},\\
&\left\vert \lambda^{-k} \psi_{k}^{(m)}(x) \right\vert \lesssim \frac{1}{\alpha^{\frac{k}{4}+1}}\qquad &&\text{ on } [-a_{-},a_{+}].
\end{aligned}
\end{equation}
\end{lemma}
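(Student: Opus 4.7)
For the first estimate, I start from the eikonal formula $\lambda\psi_{-1}^{(1)} = iV_{\lambda}^{1/4}$, so $\mathrm{Re}(\lambda\psi_{-1}^{(1)}) = -\mathrm{Im}(V_{\lambda}^{1/4})$. Using the explicit principal-branch formula \eqref{Eq Forth Root} applied to $z = V_{\lambda} = (\alpha-\Re V)+ i(\beta-\Im V)$, the imaginary part of $V_\lambda^{1/4}$ takes the shape
\[
\mathrm{Im}(V_{\lambda}^{1/4}) \;=\; \frac{1}{2}\cdot \frac{\beta-\Im V}{(|V_{\lambda}|+\Re V_{\lambda})^{1/2}\bigl(|V_{\lambda}|^{1/2}+\tfrac{1}{\sqrt 2}(|V_{\lambda}|+\Re V_{\lambda})^{1/2}\bigr)^{1/2}}.
\]
By \eqref{V lambda}, on $J_{\alpha}$ one has $|V_{\lambda}|\approx\alpha$; moreover the smallness $|\Re V|=o(\alpha)$ upgrades this to $\Re V_{\lambda}=\alpha-\Re V\approx\alpha$, so $|V_{\lambda}|+\Re V_{\lambda}\approx\alpha$ and the two nested radicals in the denominator are $\approx\alpha^{1/2}$ and $\approx\alpha^{1/4}$ respectively. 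Hence the full denominator is $\approx\alpha^{3/4}$, which gives $\mathrm{Re}(\lambda\psi_{-1}^{(1)}) \approx (\Im V-\beta)/\alpha^{3/4}$. On $I^{\pm}\cap J_{\alpha}^{\pm}$ the lower comparison in $\approx$ is legitimate because \eqref{Assump G Im V 2} forces $|\Im V-\beta|\gtrsim 1$ with a definite sign. On the compact strip $[-a_-,a_+]$, $V$ and hence $\Im V-\beta$ are bounded by continuity, so only the upper estimate $|\mathrm{Re}(\lambda\psi_{-1}^{(1)})|\lesssim\alpha^{-3/4}$ survives, as announced.

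For the second estimate I invoke Lemma \ref{Lem Transport Solution}, which yields
\[
\lambda^{-k}\psi_{k}^{(m)}(x) \;=\; \sum_{j=1}^{k+m}\frac{d_{k+m,j}(x)}{V_{\lambda}(x)^{\,j+k/4}},
\]
the $j=0$ term being absent since $d_{k+m,0}=0$ whenever $k+m\geq 1$. Each $d_{k+m,j}\in D_{k+m,j}$ is a finite linear combination of monomials $\prod_{p}(V^{(p)})^{\alpha_p}$ with $\sum_p p\alpha_p=k+m$ and $\sum_p\alpha_p=j$. On $I^{\pm}\cap J_\alpha^\pm$, condition \eqref{Assump G Der} gives $|V^{(p)}|\lesssim\tau_{\pm}^{p}|V|$, so that $|\prod_p(V^{(p)})^{\alpha_p}|\lesssim\tau_{\pm}^{k+m}|V|^{j}$. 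Combined with $|V_{\lambda}|^{j+k/4}\approx\alpha^{\,j+k/4}$ from \eqref{V lambda}, this produces the per-term bound $\tau_{\pm}^{k+m}|V|^j/\alpha^{\,j+k/4}$. The last input is $|V|\lesssim\alpha$ on $J_\alpha$ (again \eqref{V lambda}), which for every $j\geq 1$ yields $|V|^{j}/\alpha^{j}\lesssim|V|/\alpha$. Summing over the finite index set produces the desired inequality $|\lambda^{-k}\psi_k^{(m)}|\lesssim\tau_\pm^{k+m}|V|/\alpha^{\,k/4+1}$. On $[-a_-,a_+]$ the same decomposition is used, but now every $V^{(p)}$ and $\tau_\pm$ is bounded merely by continuity and local Sobolev regularity, giving $|d_{k+m,j}|\lesssim 1$; since $|V_\lambda|\approx\alpha$ and $j\geq 1$, each term is bounded by $\alpha^{-(j+k/4)}\leq\alpha^{-(1+k/4)}$.

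\textbf{Main obstacle.} There is no deep step here; the argument is an organised bookkeeping exercise. The only genuinely delicate point is the \emph{lower} bound in the $\approx$ of \eqref{Eq Estimate Re -1}: it requires $\Re V_\lambda\approx\alpha$ (not merely $|\Re V_\lambda|\lesssim\alpha$), which itself is a consequence of the optimal-in-$\alpha$ smallness $|\Re V|=o(\alpha)$ extracted from \eqref{Assump Bound Re Im}. Losing that smallness would spoil one of the two $\approx$'s in the denominator and leave only the upper bound. A secondary but technical care is to apply \eqref{Assump G Der} strictly on $I^{\pm}$ and to deal with $[-a_-,a_+]$ by compactness and continuity of $V,V^{(p)},\tau_\pm$, so that the two regions are treated separately but glue to the single statement of the lemma.
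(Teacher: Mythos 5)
Your proof is correct and follows essentially the same route as the paper: deriving the explicit formula for $\mathrm{Re}(\lambda\psi_{-1}^{(1)})$ from the principal fourth-root representation \eqref{Eq Forth Root}, using \eqref{V lambda} to show the denominator is comparable to $\alpha^{3/4}$ and the sign control from \eqref{Assump G Im V 2}, and then invoking Lemma \ref{Lem Transport Solution} together with \eqref{Assump G Der} and $|V|\lesssim\alpha\lesssim|V_\lambda|$ to bound the transport terms, treating $[-a_-,a_+]$ separately by compactness. Your observation that $|\Re V|=o(\alpha)$ is needed to keep $|V_\lambda|+\Re V_\lambda$ bounded away from zero is a correct and worthwhile emphasis that the paper handles implicitly through \eqref{V lambda}.
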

\begin{proof}
From the formula of the eikonal solution \eqref{Eq Eikonal Solution} and the principal forth root given by~\eqref{Eq Forth Root}, we have
\begin{equation}\label{Re psi -1}
\mathrm{Re}\, \left(\lambda \psi_{-1}^{(1)}\right) = \frac{1}{2}\frac{\Im V-\beta}{\left( \vert V_{\lambda}\vert +\alpha-\Re V \right)^{1/2}\left(\vert V_{\lambda}\vert^{1/2}+\frac{1}{\sqrt{2}}\left(\vert V_{\lambda}\vert+\alpha-\Re V\right)^{1/2}\right)^{1/2}}.
\end{equation}
Thanks to the application of \eqref{V lambda} for the denominator of $\Re\left(\lambda \psi_{-1}^{(1)}\right)$, the first estimate in \eqref{Eq Estimate Re -1} is obtained directly by the fixed sign of $\Im V-\beta$ on each $I^{-}$ and $I^{+}$ (see \eqref{Assump G Im V 2}), while the second one in \eqref{Eq Estimate Re -1} is attained from the continuity of $\Im V$ on $[-a_{-},a_{+}]$ and the boundedness of $B$.

For each $k\in [[-1,N-1]]$ and for each $m\in [[1,3]]$ such that $k+m\geq 1$, Lemma \ref{Lem Transport Solution} and the assumption \eqref{Assump G Der} combining with the characteristic  \eqref{Notation Irj} of the set $I_{r,j}$ yield that, on $I^{+}\cap J_{\alpha}^{+}$,
\begin{align*}
\left\vert \lambda^{-k} \psi_{k}^{(m)}\right\vert &\leq  \frac{1}{\left\vert V_{\lambda}\right\vert^{\frac{k}{4}}}\sum_{j=1}^{k+m} \frac{\vert d_{k+m,j}\vert}{\vert V_{\lambda}\vert^{j}}\\
&\lesssim  \frac{1}{\vert V_{\lambda}\vert^{\frac{k}{4}}}\sum_{j=1}^{k+m} \frac{\displaystyle \sum_{\alpha\in I_{k+m,j}} \vert V^{(1)}\vert^{\alpha_{1}}\vert V^{(2)}\vert^{\alpha_{2}}\ldots \vert V^{(k+m+1-j)}\vert^{\alpha_{k+m+1-j}}}{\vert V_{\lambda}\vert^{j}}\\
&\lesssim  \frac{\tau_{+}^{k+m}}{\vert V_{\lambda}\vert^{\frac{k}{4}}}\sum_{j=1}^{k+m}  \frac{\vert V \vert^{j}}{\vert V_{\lambda}\vert^{j}}.
\end{align*}
From \eqref{V lambda}, it implies that $\vert V \vert \lesssim \alpha \lesssim \vert V_{\lambda} \vert$ on $J_{\alpha}$. Then the first estimate in \eqref{Eq Estimate Re k} for $x>0$ is obtained:
\[ \left\vert \lambda^{-k} \psi_{k}^{(m)}\right\vert \lesssim \frac{\tau_{+}^{k+m}\vert V\vert}{\vert V_{\lambda}\vert^{\frac{k}{4}+1}}\sum_{j=0}^{k+m-1}  \frac{\vert V \vert^{j}}{\vert V_{\lambda}\vert^{j}}\lesssim \frac{\tau_{+}^{k+m}\vert V\vert}{\alpha^{\frac{k}{4}+1}}. \]
On $I^{-}\cap J_{\alpha}^{-}$, all the above estimates are analogous. For $k\in [[-1,N-1]]$ and $m\in [[1,3]]$, we observe that the maximal derivatives of $V$ appearing in the expression of $\lambda^{-k}\psi_{k}^{(m)}$ is $ k+m+1-1 =k+m$ that is at most $N+2$. Since $V\in W_{\textup{loc}}^{N+3,\infty}(\R)$, all the derivatives of $V$ in $\lambda^{-k}\psi_{k}^{(m)}$ are continuous. The second one in \eqref{Eq Estimate Re k} follows immediately from the boundedness of the derivatives of $V$ on $[-a_{-},a_{+}]$.
\end{proof}
\begin{remark}[\emph{The assumption on $\Re V$}]\label{Remark Re V}
Let us explain why we set up the condition \eqref{Assump Bound Re Im} on $\Re V$. We consider the polynomial potential
\[ V(x)=-x^{\rho}+i\ \textup{sign}(x) \vert x \vert^{\gamma} \qquad \text{ with }  \rho >0,\,\gamma> 0.\]
It is obvious that the assumption \eqref{Assump G Im V 1} is fulfilled. In this case, we can choose $\tau_{\pm}(x)=(x^2+1)^{-\frac{1}{2}}$ to satisfy assumptions \eqref{Assump G Der} and \eqref{V1} and we have $\tau_{\pm}(x) \approx \vert x \vert^{-1}$ for $\vert x \vert \gtrsim 1$ then the condition \eqref{Tau} is satisfied with $\nu_{\pm}=-1$. The assumption \eqref{Assump Bound Re Im} reads: there exists $\varepsilon_{1}>0$ such that
\[ 3\rho-4 +\rho \varepsilon_{1}\leq 4\gamma. \]
We assume on the contrary that
\[ 3\rho-4>4 \gamma.\]
By change of variable $t=\alpha^{\frac{1}{\rho}}s$, it yields that, for all $x>0$,
\[\int_{0}^{x} \left\vert \Re \left( \lambda \psi_{-1}^{(1)}(t) \right)\right\vert\, \dd t \lesssim  \int_{0}^{x} \frac{\vert t^{\gamma} \vert+ \vert\beta \vert}{\left(\alpha+t^{\rho}\right)^{\frac{3}{4}}} \, \dd t\lesssim \alpha^{\frac{4\gamma-3\rho+4}{4\rho}}\int_{0}^{+\infty} \frac{s^{\gamma}+\frac{\vert \beta \vert}{\alpha^{\gamma}}}{\left(1+s^{\rho}\right)^{\frac{3}{4}}} \, \dd t=o(1), \qquad \alpha \to +\infty.\]
It means that the dominant part $\int_{0}^{x} \lambda \Re \left( \lambda \psi_{-1}^{(1)}(t)\right) \, \dd t$ in the expansion of $P_{\lambda,N}$ (as we see later in the next proposition) is bounded (uniformly in $\lambda$) on $R_{+}$ and this will spoil completely the decay of our pseudomode.
\end{remark}
\begin{proposition}\label{Prop Exponential Decay}
There exists $c>0$ such that, for $\alpha\gtrsim 1$ and for all $\beta\in B$,
\begin{equation}\label{Eq Derivative Part Estimate}
\frac{\Vert (\mathcal{D}_{\lambda,N} \xi) \exp(-P_{\lambda,N}) \Vert }{\Vert \xi \exp(-P_{\lambda,N})\Vert }\lesssim \exp\left(-c\alpha^{\frac{\eta}{4}}\right),
\end{equation}
where
\begin{itemize}
\item $\mathcal{D}_{\lambda,N}$ is the differential representation $D_{\lambda}$ after replacing $P$ as $P_{\lambda,N}$ in \eqref{Eq Reminder},
\item  $\eta:=\left\{ \begin{aligned}
& \eta_{1}\qquad &&\text{if } V \text{ is unbounded at } \pm \infty,\\
& \eta_{2}\qquad &&\text{if } V \text{ is bounded at } \pm \infty.
\end{aligned}
\right.$
\end{itemize}

\end{proposition}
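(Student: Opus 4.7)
The plan is to exhibit an exponential gap between the denominator and the numerator. For the denominator, I would restrict integration to the fixed central interval $[-a_{-},a_{+}]$ where $\xi\equiv 1$. By Lemma \ref{Lemma psi k}, $|\Re(\lambda\psi_{-1}^{(1)})|\lesssim \alpha^{-3/4}$ and $|\lambda^{-k}\psi_{k}^{(1)}|\lesssim \alpha^{-k/4-1}$ on this set, and the initial choice $\psi_{k}(0)=0$ forces $|P_{\lambda,N}(x)|\lesssim 1$ uniformly on $[-a_{-},a_{+}]$. Hence $|e^{-P_{\lambda,N}}|\gtrsim 1$ there, giving $\Vert\xi\,e^{-P_{\lambda,N}}\Vert\gtrsim 1$.

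For the numerator, the key point is that $\mathcal{D}_{\lambda,N}\xi$ is supported on the two boundary layers $[\delta_{\alpha}^{+}-\Delta_{\alpha}^{+},\delta_{\alpha}^{+}]$ and $[-\delta_{\alpha}^{-},-\delta_{\alpha}^{-}+\Delta_{\alpha}^{-}]$ since $\xi$ is constant elsewhere. On each layer I need: (i) $\Re P_{\lambda,N}(x)\gtrsim \alpha^{\eta/4}$, so $|e^{-P_{\lambda,N}}|\lesssim \exp(-c\alpha^{\eta/4})$; and (ii) $|\mathcal{D}_{\lambda,N}\xi(x)|$ is polynomially bounded in $\alpha$. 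For (i), since $\Re(\lambda\psi_{-1}^{(1)})\geq 0$ throughout $I^{+}\cap J_{\alpha}^{+}$ by \eqref{Eq Estimate Re -1}, $\Re(\lambda\psi_{-1})$ is nondecreasing on $I^{+}$, so its infimum over the right layer is attained at $\delta_{\alpha}^{+}-\Delta_{\alpha}^{+}$. Estimating
\[
\int_{\delta_{\alpha}^{+}-2\Delta_{\alpha}^{+}}^{\delta_{\alpha}^{+}-\Delta_{\alpha}^{+}}\Re\bigl(\lambda\psi_{-1}^{(1)}(t)\bigr)\,\dd t \;\gtrsim\; \Delta_{\alpha}^{+}\cdot\frac{\Im V(\delta_{\alpha}^{+})}{\alpha^{3/4}}\;=\;\eta\,\alpha^{\eta_{1}/4}
\]
by means of the comparability \eqref{Eq v assymptotic}, the defining equation of $\delta_{\alpha}^{+}$, and the arithmetic identity $(3+\varepsilon_{1})/(4+\varepsilon_{1})-3/4=\eta_{1}/4$, one gets $\Re(\lambda\psi_{-1})(\delta_{\alpha}^{+}-\Delta_{\alpha}^{+})\gtrsim \alpha^{\eta_{1}/4}$; the bounded case follows analogously with $\eta_{2}$. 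The lower-order corrections $\sum_{k\geq 0}\lambda^{-k}\psi_{k}$ are then dominated using the pointwise bounds of Lemma \ref{Lemma psi k} together with \eqref{V lambda}--\eqref{Eq tau bound above} and \eqref{Assump Bound Re Im}, so that they remain of strictly lower order than the eikonal gain. For (ii), expanding $\mathcal{D}_{\lambda,N}$ via \eqref{Eq Reminder}, I would bound each $|\xi^{(j)}|$ by $(\Delta_{\alpha}^{\pm})^{-j}$ through \eqref{Eq cutoff Delta}, and each $|P_{\lambda,N}^{(j)}|$ via Lemma \ref{Lemma psi k} by a combination of $\alpha^{j/4}$ and powers of $\tau_{\pm}\lesssim \alpha^{(1-\eta)/4}$, producing a polynomial bound.

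Combining (i) and (ii) with the finite length of the boundary layers yields
\[
\bigl\Vert(\mathcal{D}_{\lambda,N}\xi)\,e^{-P_{\lambda,N}}\bigr\Vert \;\lesssim\; \text{poly}(\alpha)\,\exp(-c\alpha^{\eta/4}) \;\lesssim\; \exp(-c'\alpha^{\eta/4})
\]
for any $c'<c$, and dividing by the denominator bound completes the proof. The hardest step is (i), namely controlling the subdominant terms $\lambda^{-k}\psi_{k}$ for $k\geq 0$ so that they cannot swallow the leading exponential gain coming from $\lambda\psi_{-1}$. This is precisely where Assumption \ref{Assumption 1}, especially the condition \eqref{Assump Bound Re Im} tying the growth of $\Re V$ to that of $\Im V$, and the comparability relations \eqref{Eq v assymptotic} across the doubled boundary layer, are indispensable; Remark \ref{Remark Re V} shows on the polynomial example what would break down if \eqref{Assump Bound Re Im} were violated.
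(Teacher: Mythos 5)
Your proposal is correct and follows essentially the same four-step structure as the paper's proof: bound the denominator from below on the fixed plateau using \eqref{Eq Estimate Re -1}--\eqref{Eq Estimate Re k}; observe that $\mathcal{D}_{\lambda,N}\xi$ is supported on the boundary layers; on those layers establish eikonal dominance, use \eqref{Eq v assymptotic} and the defining relation $g_{\pm}(\delta_{\alpha}^{\pm})=\alpha$ to extract the exponential gain $\gtrsim\alpha^{\eta/4}$; and absorb the polynomial factors $(\Delta_{\alpha}^{\pm})^{-j}$ and powers of $|P_{\lambda,N}^{(m)}|$ into the exponential. The arithmetic you invoke, $\tfrac{3+\varepsilon_1}{4+\varepsilon_1}-\tfrac34=\tfrac{\eta_1}{4}$, matches the paper's parametrization $4+\varepsilon_1=\tfrac{4}{1-\eta_1}$, so the details line up as well.
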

\begin{proof}
The idea of the proof is as follows.
\begin{enumerate}[label=\textbf{(\arabic*)}]
\item \label{Step 1 1} We will deal with the denominator of \eqref{Eq Derivative Part Estimate} first by showing that it is bounded below by a constant which is independent of $\alpha$.
\item \label{Step 2 1} In order to handle the numerator of \eqref{Eq Derivative Part Estimate}, we will show that the eikonal term $\lambda\psi_{-1}^{(1)}$ dominates over the other terms $\lambda^{-k}\psi_{k}^{(1)}$ with $k\geq 0$ and thus there exists constants $C_{\pm}>0$ such that, for all $x\in I^{\pm}\cap J_{\alpha}^{\pm}$,
\begin{equation}\label{Exponential P 1}
\left\vert \exp(-P_{\lambda,N}(x))\right\vert\lesssim \exp\left( -\frac{C_{\pm}}{\alpha^{3/4}}\int_{\pm a_{\pm}}^{x}  \Im V(t)\, \dd t   \right).
\end{equation}
\item \label{Step 3 1} From \eqref{Exponential P 1}, we show that there exists a constant $c_{1}>0$ such that, for all $x\in \left[-\delta_{\alpha}^{-},-\delta_{\alpha}^{-}+\Delta_{\alpha}^{-}\right]$  and for all $x\in \left[\delta_{\alpha}^{+}-\Delta_{\alpha}^{+},\delta_{\alpha}^{+}\right]$,
\begin{equation}\label{Exponential P 2}
\left\vert \exp\left(-P_{\lambda,N}(x)\right)\right\vert\lesssim \exp\left( -c_{1}\alpha^{\frac{\eta}{4}}   \right).
\end{equation}
\item \label{Step 4 1} Finally, we use \eqref{Exponential P 2} to control $D_{\lambda,N}\xi$ in \eqref{Eq Derivative Part Estimate}.
\end{enumerate}
The details are as follows.
\begin{enumerate}[label=\textbf{(\arabic*)}]
\item Let us recall that
\[ P_{\lambda,N}(x) =  \sum_{k=-1}^{N-1}  \int_{0}^{x} \lambda^{-k}\psi_{k}^{(1)}(t) \, \dd t.\]
Thanks to the estimates \eqref{Eq Estimate Re -1} and \eqref{Eq Estimate Re k}, we have the bound of \eqref{Eq Derivative Part Estimate},
\begin{equation}\label{Re P bounded}
\left\vert\Re P_{\lambda,N}(x)\right\vert \lesssim \frac{1}{\alpha^{\frac{3}{4}}},\qquad  x\in [0,a_{+}].
\end{equation}
Then, there is a constant $C_{1}>0$ such that
\begin{align*}
\int_{\R} \vert \xi \exp(-P_{\lambda,n}(x)) \vert^2 \, \dd x \geq \int_{0}^{a^{+}} \exp\left(-2 \Re P_{\lambda,N}(x) \right) \, \dd x\geq a_{+} \exp\left(-\frac{C_1}{\alpha^{3/4}}  \right).
\end{align*}
Therefore, by considering large $\alpha>0$, we get
\[ \left\Vert \xi \exp\left(-P_{\lambda,N}\right) \right\Vert \gtrsim 1.\]
\item Next, we will prove \eqref{Exponential P 1}. On $I^{+}\cap J^{+}_{\alpha}$, thanks to \eqref{Eq Estimate Re k} and \eqref{Eq tau bound above}, we have 
\begin{align*}
\left\vert \sum_{k=0}^{N-1}  \mathrm{Re}\, \left(\lambda^{-k}\psi_{k}^{(1)}(t)\right) \right\vert &\lesssim \sum_{k=0}^{N-1} \frac{\vert V(t)\vert \tau_{+}(t)^{k+1}}{\alpha^{\frac{k}{4}+1}} =\frac{\vert V(t)\vert \tau_{+}(t) }{\alpha} \sum_{k=0}^{N-1} \left(\frac{\tau_{+}(t)}{\alpha^{\frac{1}{4}}}\right)^{k}\\
&\lesssim \frac{\vert V(t)\vert \tau_{+}(t) }{\alpha}.
\end{align*}
Combining this with \eqref{Eq Estimate Re -1} and \eqref{Assump G Im V 2}, it implies that
\begin{align*}
\frac{\displaystyle\left\vert \sum_{k=0}^{N-1}  \mathrm{Re}\, \left(\lambda^{-k}\psi_{k}^{(1)}(t)\right)\right\vert}{\Re \left( \lambda \psi_{-1}^{(1)}(t)\right)}\lesssim \frac{\vert V(t)\vert \tau_{+}(t) }{\Im V(t)\alpha^{\frac{1}{4}}}\lesssim \left\{
\begin{aligned}
&\alpha^{-\frac{\eta_{1}}{4}}\qquad \text{if } V \text{ is unbounded at }+\infty,\\
&\alpha^{-\frac{\eta_{2}}{4}}\qquad \text{if } V \text{ is bounded at }+\infty.
\end{aligned}
\right.
\end{align*}
Indeed, we consider that cases:
\begin{enumerate}[i)]
\item If $V$ is bounded at $+\infty$, the above estimate comes from \eqref{Assump G Im V 2} and \eqref{Eq tau bound above}.
\item If $V$ is unbounded at $+\infty$, the triangular inequality leads to
\begin{align*}
\frac{\vert V(t)\vert \tau_{+}(t) }{\Im V(t)\alpha^{\frac{1}{4}}}\leq \frac{ \vert\Re V(t)\vert \tau_{+}(t) }{\Im V(t)\alpha^{\frac{1}{4}}}+ \frac{\tau_{+}(t)}{\alpha^{\frac{1}{4}}},
\end{align*}
then \eqref{Re Im 1}, the definition of $g_{+}$ in \eqref{Function g} and Lemma \ref{Lemma alpha} allow us to control the term with $\Re V$ as follows
\begin{align*}
\frac{ \vert\Re V(t)\vert \tau_{+}(t) }{\Im V(t)\alpha^{\frac{1}{4}}}\lesssim \frac{g_{+}(t)^{1-\eta_{1}}\tau_{+}(t)}{\Im V(t) \alpha^{\frac{1}{4}}}=\frac{g_{+}(t)^{\frac{1-5\eta_{1}}{4}}}{\alpha^{\frac{1}{4}}}\leq \alpha^{-\frac{5\eta_{1}}{4}}.
\end{align*}
In the last inequality, we assumed that $\varepsilon_{1}$ small enough in Assumption \eqref{Assump Bound Re Im}. Hence using \eqref{Eq tau bound above} to control the term $\frac{\tau_{+}(t)}{\alpha^{\frac{1}{4}}}$, we have a conclusion.
\end{enumerate}
Therefore, \eqref{Exponential P 1} is obtained by employing \eqref{Re P bounded} , \eqref{Eq Estimate Re -1} and \eqref{Assump G Im V 2}. In detail, for all $x\in I^{+}\cap J_{\alpha}^{+}$, we have (with some constant $C_{+}$)
\begin{align*}
 \left\vert\exp\left(-P_{\lambda,N}(x) \right)\right\vert &=\exp\left( -\Re P_{\lambda,N}(a_{+})-\int_{a_{+}}^{x}\sum_{k=-1}^{N-1}\Re\left(\lambda^{-k}\psi_{k}^{(1)}(t)\right) \, \dd t\right)\\
 &\lesssim\exp\left( -\int_{a_{+}}^{x}(1-o(1))\Re \left(\lambda\psi_{-1}^{(1)}\right)\, \dd t\right)\\
 &\lesssim \exp\left( -\frac{C_{+}}{\alpha^{3/4}}\int_{a_{+}}^{x}  \Im V(t)\, \dd t   \right).
\end{align*}
The proof for $x\in I^{-}\cap J_{\alpha}^{-}$ is the same.
\item In order to prove \eqref{Exponential P 2}, we consider two cases:
\begin{enumerate}[i)]
\item If $V$ is unbounded at $+\infty$, from the property \eqref{Eq v assymptotic} and the definitions of $\delta_{\alpha}^{+}$ in \eqref{Eq delta Def} and $g_{+}(\delta_{\alpha}^{+})$ in \eqref{Function g}, we have that, for $x\in \left[\delta_{\alpha}^{+}-\Delta_{\alpha}^{+},\delta_{\alpha}^{+}\right]$,
\begin{align*}
\int_{a_{+}}^{x} \Im V(t)\, \dd t &\geq \int_{\delta_{\alpha}^{+}-2\Delta_{\alpha}^{+}}^{x} \Im V(t)\, \dd t\gtrsim \left(\Delta_{\alpha}^{+}\right) \Im V(\delta_{\alpha}^{+})\\
&\gtrsim \frac{\Im V(\delta_{\alpha}^{+})}{\tau(\delta_{\alpha}^{+})}=\alpha^{\frac{3+\eta_{1}}{4}}.
\end{align*}
\item If $V$ is bounded at $+\infty$, we use the assumption \eqref{Assump G Im V 2} to obtain,
\begin{align*}
\int_{a_{+}}^{x} \Im V(t)\, \dd t \gtrsim \int_{\delta_{\alpha}^{+}-2\Delta_{\alpha}^{+}}^{x}  \Im V(t)\, \dd t\gtrsim \Delta_{\alpha}^{+}=\alpha^{\frac{3+\eta_{2}}{4}}.
\end{align*}
\end{enumerate}
Then, \eqref{Exponential P 2} is followed directly from \eqref{Exponential P 1}.
\item In order to control the terms attached with $\xi^{(\ell)}$ for $\ell\in[[1,4]]$, we notice that,  for $m\in [[1,3]]$
\begin{align*}
\left\vert P_{\lambda,N}^{(m)}(x) \right\vert \leq  \sum_{k=-1}^{N-1} \left\vert  \lambda^{-k} \psi_{k}^{(m)}(x)\right\vert \lesssim  \sum_{k=-1}^{N-1} \frac{\tau_{\pm}(x)^{k+m}}{\alpha^{\frac{k}{4}}},\qquad x\in I^{\pm}\cap J_{\alpha}^{\pm}.
\end{align*}
Here we employed \eqref{Eq Estimate Re k} and \eqref{Eq Eikonal Solution}. Thanks to the upper bound of $\tau_{\pm}$ by some power of $\alpha$ in \eqref{Eq tau bound above}, we can bound $P_{\lambda,n}^{(m)}$ by a polynomial of $\alpha$ and thus they are also rapidly decaying when they are attached with $\exp(-c_{1}\alpha^{\frac{\eta}{4}})$. For example, we give a detail on how to deal with the terms attached with $\xi^{(4)}$ and $\xi^{(3)}$, the other terms are estimated similarly:
\begin{enumerate}[label=\textbf{\alph*)}]
\item The attached with $\xi^{(4)}$, by employing \eqref{Eq cutoff Delta} and \eqref{Exponential P 2}, we have
\begin{equation*}
\begin{aligned}
&\int_{-\delta_{\alpha}^{-}}^{-\delta_{\alpha}^{-}+\Delta_{\alpha}^{-}} \vert\xi^{(4)}(x)\vert^2 \left\vert \exp\left(-P_{\lambda,N}(x)\right) \right\vert^2\, \dd x+\int_{\delta_{\alpha}^{+}-\Delta_{\alpha}^{+}}^{\delta_{\alpha}^{+}}  \vert\xi^{(4)}(x)\vert^2 \left\vert \exp\left(-P_{\lambda,N}(x)\right) \right\vert^2\, \dd x\\
&\lesssim  \left(\Delta_{\alpha}^{-}\right)^{-7} \exp\left(-2c_{1} \alpha^{\frac{\eta}{4}}\right)+\left(\Delta_{\alpha}^{+}\right)^{-7} \exp\left(-2c_{1} \alpha^{\frac{\eta}{4}}\right).
\end{aligned}
\end{equation*}

By using \eqref{Eq tau bound above} for $\Delta_{\alpha}^{\pm}$ defined in \eqref{Eq Delta Def} when $V$ is unbounded at $\pm \infty$ and $\delta_{\alpha}^{\pm}=\frac{\alpha^{\frac{3+\eta_{2}}{4}}}{2}$ when $V$ is bounded at $\pm \infty$, it implies that
\begin{equation*}
\left(\Delta_{\alpha}^{\pm}\right)^{-1}\lesssim \left\{
\begin{aligned}
&\alpha^{\frac{1-\eta_{1}}{4}} &&\text{if } V \text{ is unbounded at } \pm \infty,\\
&\alpha^{-\frac{3+\eta_{2}}{4}} &&\text{if } V \text{ is bounded at } \pm \infty.
\end{aligned}
\right.
\end{equation*}
Therefore, there exists $c_{2}>0$ such that
\[\left\Vert \xi^{(4)} \exp\left(-P_{\lambda,N}(x)\right)\right\Vert\lesssim \exp\left(-c_{2} \alpha^{\frac{\eta}{4}}\right). \]
\item The attached with $\xi^{(3)}$, we have
\begin{align*}
\left\vert \xi^{(3)}(x)P_{\lambda,N}^{(1)} \exp\left(-P_{\lambda,N}(x)\right) \right\vert &\lesssim \left(\Delta_{\alpha}^{\pm}\right)^{-3}\left(\sum_{k=-1}^{N-1} \frac{\tau_{+}(x)^{k+1}}{\alpha^{\frac{k}{4}}} \right)\exp\left(-c_{1} \alpha^{\frac{\eta}{4}}\right)\\
&\lesssim \left(\Delta_{\alpha}^{\pm}\right)^{-3}\left(\sum_{k=-1}^{N-1} \alpha^{\frac{(k+1)(1-\eta)-k}{4}}\right)\exp(-c_{1} \alpha^{\frac{\eta}{4}})\\
&\lesssim \exp\left(-c_{3}\alpha^{\frac{\eta}{4}}\right).
\end{align*}
\end{enumerate}
Consequently, we put everything together, we obtain \eqref{Eq Derivative Part Estimate}.
\end{enumerate}
\end{proof}
\subsection{Remainder estimate (Proof of Theorem \ref{Theorem 1})}
Obviously, $\Psi_{\lambda,N}$ belongs to the domain of $\mathscr{L}_{V}$ because of its support. By the estimate \eqref{WKB} and Proposition \ref{Prop Exponential Decay}, we have
\begin{align*}
\frac{\Vert (\mathscr{L}_{V}- \lambda)\Psi_{\lambda,N} \Vert}{\Vert \Psi_{\lambda,N} \Vert} \lesssim \exp(-c \alpha^{\frac{\eta}{4}})+ \Vert \mathcal{R}_{\lambda,N} \Vert_{L^{\infty}(J_{\alpha})}.
\end{align*}
Let $n=N$ in Lemma \ref{Lem Reminder Estimate}, estimate as in the proof of Lemma \ref{Lemma psi k} and employ \eqref{Eq tau bound above}, it yields that, for $N\geq 1$,
\begin{align*}
\left\vert \mathcal{R}_{\lambda,N}(x)\right\vert &\lesssim \sum_{k=0}^{3N-1} \frac{1}{\vert V_{\lambda}\vert^{\frac{k+N-3}{4}}} \sum_{j=1}^{k+N+1}\frac{\vert d_{k+N+1,j}\vert}{\vert V_{\lambda}\vert^{j}}\lesssim \sum_{k=0}^{3N-1} \frac{\tau_{\pm}(x)^{k+N+1}}{\vert V_{\lambda}\vert^{\frac{k+N-3}{4}}} \sum_{j=1}^{k+N+1}\frac{\vert V(x)\vert^{j}}{\vert V_{\lambda}\vert^{j}}\\
&\lesssim \sum_{k=0}^{3N-1} \frac{\tau_{\pm}(x)^{k+N+1}\vert V(x) \vert}{\vert V_{\lambda}\vert^{\frac{k+N+1}{4}}}\lesssim \frac{\vert V(x) \vert \tau_{\pm}(x)^{N+1}}{\alpha^{\frac{N+1}{4}}},\qquad  x\in I^{\pm}\cap J_{\lambda}^{\pm}.
\end{align*}
Similarly, since $V\in W^{N+3,\infty}_{\textup{loc}}(\R)$, we have the following estimate on the compact set $[-a_{-},a_{+}]$:
\begin{align*}
\left\vert \mathcal{R}_{\lambda,N}(x) \right\vert \lesssim \alpha^{-\frac{N+1}{4}},\qquad  x\in [-a_{-},a_{+}].
\end{align*}
Likewise, with the same reason, we have the same estimate for $N=0$. Thus, the estimate \eqref{Eq Main 1} is followed for all $N\geq 0$. 

\begin{remark}\label{Remark sign}
From the above construction,  we see that if we change the sign of $\Im V$ in condition \eqref{Assump G Im V 1} as follows
\begin{equation}\label{Assump G Diagonal 1'}
\limsup_{x\to +\infty} \Im V(x)<0 <\liminf_{x\to -\infty} \Im V(x);
\end{equation}
then the previous analysis still works. Indeed, what we need to do is just to choose the minus sign in the formula of $\psi_{-1}^{(1)}$ in \eqref{Eq Eikonal Solution}. Then, we have
\begin{equation*}
\mathrm{Re}\, \left(\lambda \psi_{-1}^{(1)}\right)=\frac{1}{2}\frac{\beta-\Im V}{\left( \vert V_{\lambda}\vert +\alpha-\Re V \right)^{1/2}\left(\vert V_{\lambda}\vert^{1/2}+\frac{1}{\sqrt{2}}\left(\vert V_{\lambda}\vert+\alpha-\Re V\right)^{1/2}\right)^{1/2}}.
\end{equation*}
By fixing $\beta_{\pm} \in \R_{0}^{+}$ such that
\[ \limsup_{x\to +\infty} \Im V(x)<-\beta_{-} \text{ and } \beta_{+} <\liminf_{x\to -\infty} \Im V(x),\]
then there exist constants $a_{\pm}>0$ such that, for all $\beta\in B=[-\beta_{-},\beta_{+}]$,
\begin{equation*}
\begin{aligned}
&\Im V(x)-\beta\lesssim  \Im V(x)\lesssim -1,\qquad && x\in I^{+}\coloneqq \left\{ x\in \R_{0}^{+}: x\geq a_{+}\right\},\\
&\Im V(x)-\beta \gtrsim \Im V(x)\gtrsim 1,\qquad && x\in I^{-}\coloneqq \left\{ x\in \R_{0}^{-}: x\leq -a_{-}\right\}.
\end{aligned}
\end{equation*} 
By repeating the procedure when proving \eqref{Eq Estimate Re -1}, we have
\begin{equation}\label{Eq lambda Re psi-1'}
\begin{aligned}
&\mathrm{Re} \,\left(\lambda\psi_{-1}^{(1)}(x)\right)\approx \frac{\beta-\Im V(x)}{\alpha^{\frac{3}{4}}}\gtrsim  - \frac{\Im V(x)}{\alpha^{\frac{3}{4}}}\qquad && \text{ on } I^{+}\cap J_{\alpha}^{+},\\
&\mathrm{Re} \,\left(\lambda\psi_{-1}^{(1)}(x)\right)\approx \frac{\beta-\Im V(x)}{\alpha^{\frac{3}{4}}}\lesssim - \frac{\Im V(x)}{\alpha^{\frac{3}{4}}}\qquad &&\text{ on } I^{-}\cap J_{\alpha}^{-}.
\end{aligned}
\end{equation}
Therefore, the pseudomode now possesses the right sign for the decay. Although all the other terms $\left(\psi_{k}^{(1)}\right)_{0\leq k \leq N-1}$ also change their signs accordingly, but it does not matter because they are all estimated with the absolute value. 
\end{remark} 
\subsection{Decaying potentials}\label{Section Decaying potentials}

We reserve this section for constructing the pseudomodes for the potentials in Example~\ref{Example Decaying}. Since the condition \ref{Assump G Im V 1} is not met by the decaying of the potentials, we can not apply directly the previous constructions. However, the shape of the pseudomodes is the same as in the beginning of Subsection \ref{Section Pseudomode estimate}, just the definition of $\delta_{\lambda}^{\pm}$ (replace for $\delta_{\alpha}^{\pm}$) should be defined differently. In the coming paragraphs, when we say $\lambda=\alpha+i\beta\to \infty$, we mean $\alpha\to+\infty$ and $\vert \beta \vert \to 0$ ($\beta$ can be zero). Let $a_{+}>0$ such that
\begin{align*}
\Im V(x)=\vert x \vert^{-\gamma},\qquad x\geq a_{+}.
\end{align*}
We seek for the boundary $\delta_{\lambda}^{+}$ of the cut-off such that the first term in the expansion very large when $\lambda \to \infty$. Since $V$ is bounded, we still have $\vert \lambda-V(x)\vert\approx \alpha$, thus, for $x>a_{+}$,
\[ \int_{a_{+}}^{x} \Re \left( \lambda \psi_{-1}^{(1)}(t) \right)\, \dd t\gtrsim \frac{1}{\alpha^{\frac{3}{4}}} \int_{a_{+}}^{x}\left[ \Im V(t)-\beta\right]\, \dd t=\frac{x^{1-\gamma}\left[\frac{1}{1-\gamma}-\beta x^{\gamma} \right]-\frac{a_{+}^{1-\gamma}}{1-\gamma}+\beta a_{+}}{\alpha^{\frac{3}{4}}}.\]
Here, in order that the inequality happens, we fixed the sign $\Im V-\beta>0$ on $[a_{+},\delta_{\lambda}^{+}]$ by assuming that $\vert \beta \vert \left(\delta_{\lambda}^{+}\right)^{\gamma}=o(1)$ as $\lambda\to \infty$. Combining this assumption for $\delta_{\lambda}^{+}$ with the expect that the right hand side of the above estimate very large, $\delta_{\lambda}^{\pm}$ should read
\begin{equation}\label{psi-1 decaying}
\alpha^{\frac{3}{4}} \left(\delta_{\lambda}^{+}\right)^{\gamma-1}+\vert \beta \vert \left(\delta_{\lambda}^{+}\right)^{\gamma}=o(1),\qquad \lambda \to \infty.
\end{equation}
The existence of finite positive number $\delta_{\lambda}^{+}$ satisfying $\displaystyle\lim_{\lambda \to \infty} \delta_{\lambda}^{+}=+\infty$ and \eqref{psi-1 decaying} is equivalent to the constraint \eqref{psi-1 decaying 2} on $\alpha$ and $\beta$. Indeed, this is due to the following inequality for all $s>0$,
$$\alpha^{\frac{3}{4}} s^{\gamma-1}+\vert \beta \vert s^{\gamma}\geq c_{\gamma} \alpha^{\frac{3}{4}\gamma}\vert \beta \vert^{1-\gamma}, \qquad c_{\gamma}=\frac{1}{\gamma^{\gamma}(1-\gamma)^{1-\gamma}},$$
and the choice of $\delta_{\lambda}^{+}$, for example, as follows
\begin{equation}\label{delta decaying}
\delta_{\lambda}^{+}=\left\{\begin{aligned}
&\alpha^{\frac{3}{4}}\vert \beta \vert^{-1} && \text{ if } \beta \neq 0,\\
&\alpha^{\frac{1}{1-\gamma}} && \text{ if } \beta = 0.
\end{aligned} \right.
\end{equation}
Step \ref{Step 1 1} of Proposition~\ref{Prop Exponential Decay} is easily to be obtained since all the estimates in \eqref{Eq Estimate Re -1} and \eqref{Eq Estimate Re k} on $[0,a_{+}]$ still hold. Since the potential $V$ still satisfy the assumption~\eqref{Assump G Der} with $\tau(x)=\left(x^2+1\right)^{-\frac{1}{2}}$, the estimate~\eqref{Eq Estimate Re k} keep being true for all $t \in \left[a_{+}, \delta_{\lambda}^{+}\right]$ and thus
\[\left\vert \sum_{k=0}^{N-1}  \mathrm{Re}\, \left(\lambda^{-k}\psi_{k}^{(1)}(t)\right) \right\vert \lesssim \sum_{k=0}^{N-1} \frac{\vert V(t)\vert \tau_{+}(t)^{k+1}}{\alpha^{\frac{k}{4}+1}} \lesssim \frac{\vert \Im V(t)\vert  }{\alpha}.\]
With the choice of $\delta_{\lambda}^{+}$ satisfying \eqref{psi-1 decaying}, we have, for all $t \in \left[a_{+}, \delta_{\lambda}^{+}\right]$,
\[\Im V(t)-\beta =\frac{1-\beta t^{\gamma}}{t^{\gamma}}\gtrsim \Im V(t). \]
Therefore, we estimate as in Step \ref{Step 2 1} of Proposition~\ref{Prop Exponential Decay}, that is the terms $\lambda^{-k} \psi_{k}^{(1)}$ for $k\geq 0$ can be neglected in the expansion of $P_{\lambda,N}$ and we also obtain \eqref{Exponential P 1} for all $x\in [a_{+}, \delta_{\lambda}^{+}]$. By choosing $\Delta_{\alpha}=\delta_{\lambda}^{+}/4$, we have for all $x\in [\delta_{\lambda}^{+}-\Delta_{\lambda}^{+},\delta_{\lambda}^{+}]$,
\begin{align*}
\int_{a_{+}}^{x} \Im V(t)\, \dd t \gtrsim \int_{\delta_{\lambda}^{+}-2\Delta_{\lambda}^{+}}^{x}  t^{-\gamma}\, \dd t\gtrsim \left(\delta_{\lambda}^{+}\right)^{1-\gamma},
\end{align*}
and thus, there exists $c_{1}>0$ such that, for all $x\in [\delta_{\lambda}^{+}-\Delta_{\lambda}^{+},\delta_{\lambda}^{+}]$,
\begin{equation}\label{Exponential Decaying}
\left\vert \exp\left(-P_{\lambda,N}(x)\right)\right\vert \lesssim \exp\left(-\frac{c_{1}}{\alpha^{\frac{3}{4}} \left(\delta_{\lambda}^{+}\right)^{\gamma-1}} \right).
\end{equation}
Thanks to \eqref{psi-1 decaying}, we know that the right hand side of \eqref{Exponential Decaying} has a decay as $\lambda\to \infty$. If we strengthen \eqref{psi-1 decaying 2} to (with some $\varepsilon>0$)
\[\vert\beta \vert \alpha^{\frac{3}{4}\frac{\gamma}{1-\gamma}+\varepsilon} =\mathcal{O}(1),\qquad \lambda \to \infty, \]
and choose $\delta_{\lambda}^{+}$ as in \eqref{delta decaying}, the decay of the right hand side of \eqref{Exponential Decaying} is given by (with some $c>0$ and $\eta>0$)
\[ \left\vert \exp\left(-P_{\lambda,N}(x)\right)\right\vert =\mathcal{O}\left( \exp\left( -c \alpha^{\eta}\right)\right).\]
The claim for $\delta_{\lambda}^{-}$, $\Delta_{\lambda}^{-}$ and the estimates on the negative axis are as same as the above. Therefore, by the same manner as Step \ref{Step 4 1}, we obtain 
\[ \frac{\Vert (\mathcal{D}_{\lambda,N} \xi) \exp(-P_{\lambda,N}) \Vert }{\Vert \xi \exp(-P_{\lambda,N})\Vert }=o(1), \qquad \lambda \to \infty.\]
Concerning the remainder $\mathcal{R}_{\lambda,N}$, since $V$ and $\tau$ are bounded on $\R$, we indeed have
\[ \Vert \mathcal{R}_{\lambda,N} \Vert_{L^{\infty}(J_{\lambda})} \lesssim \alpha^{-\frac{N+1}{4}}.\]
\section{Pseudomodes for large imagine pseudoeigenvalues}\label{Section Proof Theorem 23}
\subsection{Pseudomode construction}
Let Assumption \ref{Assumption 2} hold for some $N\in \N_{0}$ and let us define the pseudomode for Theorem~\ref{Theorem 2} as follows
\[ \Psi_{\lambda,N}:= \xi_{\lambda} \exp\left(-P_{\lambda,N} \right),\]
where 
\begin{itemize}
\item $\xi_{\lambda}$ is the cut-off function chosen such that, with $\Delta_{\beta}$ and $J_{\beta}$ defined as in \eqref{Delta beta},
\begin{equation}\label{Eq cutoff 2}
\begin{aligned}
&\xi_{\lambda} \in \mathcal{C}_{0}^{\infty}(\R_{+}), \quad 0\leq \xi_{\lambda} \leq 1,\\
&\xi_{\lambda}(x)=1, \qquad \text{ for all } x \in \left(x_{\beta}-\Delta_{\beta},x_{\beta}+\Delta_{\beta}\right)=:J_{\beta}',\\
&\xi_{\lambda}(x)=0,\qquad \text{ for all } x \notin \left(x_{\beta}-2\Delta_{\beta},x_{\beta}+2\Delta_{\beta}\right)=J_{\beta}.
\end{aligned}
\end{equation}

\item $\displaystyle P_{\lambda,N}(x) = \sum_{k=-1}^{N-1}  \int_{x_{\beta}}^{x} \lambda^{-k}\psi_{k}^{(1)}(t) \, \dd t$  in which $\psi_{-1}^{(1)}$ determined by \eqref{Eq Eikonal Solution} with the plus sign and $\left(\psi_{k}^{(1)}\right)_{k\in[[0,N-1]]}$ determined by \eqref{Eq Transport Solution}.
\end{itemize}
From \eqref{V1} and \eqref{V2}, we can deduce that
\begin{equation}\label{Comparable}
\tau(x) \approx \tau(x_{\beta}),\qquad \Im V(x) \approx \Im V(x_{\beta}), \qquad \Im V^{(1)}(x) \approx \Im V^{(1)}(x_{\beta}) ,\qquad  x\in J_{\beta}.
\end{equation}

\begin{proposition}
There exists $c>0$ such that, for all $\beta \gtrsim 1$, we have
\begin{equation}\label{quotient}
\frac{\left\Vert (\mathcal{D}_{\lambda,N} \xi) \exp(-P_{\lambda,N}) \right\Vert_{L^2\left(\R_{0}^{+}\right)} }{\left\Vert \xi \exp(-P_{\lambda,N})\right\Vert_{L^2\left(\R_{0}^{+}\right)} } \lesssim \exp\left(-c \frac{\Im V^{(1)}(x_{\beta})\tau(x_{\beta})^{-2}}{\alpha^{\frac{3}{4}}+\beta^{\frac{3}{4}}}\right).
\end{equation}
\end{proposition}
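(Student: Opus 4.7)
The overall structure will closely parallel the proof of Proposition~\ref{Prop Exponential Decay}, but adapted to the new geometry: the pseudomode is localized near a single turning point $x_\beta$ where $\Im V(x_\beta) = \beta$, so the role of ``fixed-sign $\Im V$ at $\pm\infty$'' is replaced by ``fixed-sign $\Im V(x)-\beta$ on each side of $x_\beta$.'' Since $\Im V^{(1)} > 0$ for large $x$ (by \eqref{V'}), $\Im V(x) - \beta$ is negative on $(x_\beta - 2\Delta_\beta, x_\beta)$ and positive on $(x_\beta, x_\beta+2\Delta_\beta)$, and in both cases $\int_{x_\beta}^x \Re(\lambda\psi_{-1}^{(1)}(t))\,dt \geq 0$. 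The proof will follow four steps in the same spirit as Steps~\ref{Step 1 1}--\ref{Step 4 1} of that earlier proposition.

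\emph{Step 1 (denominator).} On $J_\beta' = (x_\beta - \Delta_\beta, x_\beta + \Delta_\beta)$ where $\xi=1$, I would use $P_{\lambda,N}(x_\beta)=0$, the estimates on $\lambda^{-k}\psi_k^{(1)}$ from Lemma~\ref{Lem Transport Solution}, and the comparability \eqref{Comparable} to show $|\Re P_{\lambda,N}(x)| \lesssim 1$ on $J_\beta'$; consequently
\[
\lV \xi \exp(-P_{\lambda,N}) \rV_{L^2(\R_0^+)}^2 \;\geq\; \int_{J_\beta'} |e^{-P_{\lambda,N}(x)}|^2\,dx \;\gtrsim\; \Delta_\beta.
\]

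\emph{Step 2 (eikonal dominance and exponential decay).} Using formula \eqref{Re psi -1} together with the first-order Taylor expansion $\Im V(t)-\beta \approx \Im V^{(1)}(x_\beta)(t-x_\beta)$ on $J_\beta$ (a consequence of \eqref{Comparable}), and the bound $|V_\lambda| \lesssim |\alpha|+\beta$ coming from \eqref{alpha 1}, I would show
\[
\int_{x_\beta}^x \Re\bigl(\lambda\psi_{-1}^{(1)}(t)\bigr)\,dt \;\gtrsim\; \frac{\Im V^{(1)}(x_\beta)\,(x-x_\beta)^2}{\alpha^{3/4}+\beta^{3/4}}.
\]
At $|x-x_\beta|\geq \Delta_\beta = \eta/\tau(x_\beta)$ this yields precisely the claimed exponent $\Im V^{(1)}(x_\beta)\tau(x_\beta)^{-2}/(\alpha^{3/4}+\beta^{3/4})$. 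Next, using Lemma~\ref{Lem Transport Solution} together with \eqref{alpha 1}, \eqref{alpha 20} and \eqref{Comparable}, I would check that the remaining contributions satisfy $\sum_{k\geq 0}|\lambda^{-k}\psi_k^{(1)}(x)| = o\bigl(\Re(\lambda\psi_{-1}^{(1)}(x))\bigr)$ uniformly on $J_\beta\setminus J_\beta'$, so that
\[
|e^{-P_{\lambda,N}(x)}| \;\lesssim\; \exp\!\left(-c\,\frac{\Im V^{(1)}(x_\beta)\tau(x_\beta)^{-2}}{\alpha^{3/4}+\beta^{3/4}}\right), \qquad x \in J_\beta\setminus J_\beta'.
\]

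\emph{Step 3 (numerator and conclusion).} Since each $\xi^{(\ell)}$ ($\ell\in[[1,4]]$) is supported on $J_\beta\setminus J_\beta'$ and bounded by $\Delta_\beta^{-\ell} = (\eta/\tau(x_\beta))^{-\ell}$, and since $|P_{\lambda,N}^{(m)}(x)|$ is bounded on $J_\beta$ by a polynomial in $\tau(x_\beta)$ and powers of $|\alpha|+\beta$ (again via Lemma~\ref{Lem Transport Solution}), every term of $(\mathcal{D}_{\lambda,N}\xi)\,e^{-P_{\lambda,N}}$ is bounded on $J_\beta\setminus J_\beta'$ by such a polynomial factor times the exponential from Step 2. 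These polynomial prefactors are absorbed by slightly reducing the constant~$c$, and the claim \eqref{quotient} follows by dividing through by Step~1. The main delicate point is Step~2: the scaling constraints \eqref{alpha 1} and \eqref{alpha 20} are tailored precisely so that the eikonal real part dominates every higher-order transport contribution on $J_\beta$—this is the hard case-analysis that uses the full strength of Assumption~\ref{Assumption 2} and will need careful use of $t_1,t_2$ and $\varepsilon_1$.
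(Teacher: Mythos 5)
Your Steps 2 and 3 match the paper's approach — the Taylor expansion of $\Im V(t)-\beta$ giving $\int_{x_\beta}^x\Re(\lambda\psi_{-1}^{(1)})\gtrsim \Im V^{(1)}(x_\beta)(x-x_\beta)^2/(\alpha^{3/4}+\beta^{3/4})$, the dominance of the eikonal term over the transport terms on $J_\beta\setminus J_\beta'$ via the scaling conditions \eqref{alpha 1}, \eqref{alpha 20}, and \eqref{xi120}, and the absorption of the polynomial prefactors coming from $\xi^{(\ell)}$ and $P_{\lambda,N}^{(m)}$ into the exponential. Those are the correct ingredients.

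However, Step 1 contains a genuine gap: the claim that $|\Re P_{\lambda,N}(x)| \lesssim 1$ on all of $J_\beta'$ is false. Indeed, at the endpoint $x = x_\beta \pm \Delta_\beta$ of $J_\beta'$ one has, by \eqref{Psi -1 2},
\[
\int_{x_\beta}^{x_\beta\pm\Delta_\beta}\Re\bigl(\lambda\psi_{-1}^{(1)}(t)\bigr)\,dt \approx \frac{\Im V^{(1)}(x_\beta)\Delta_\beta^2}{\alpha^{3/4}+\beta^{3/4}} = \eta^2\,\frac{\Im V^{(1)}(x_\beta)\tau(x_\beta)^{-2}}{\alpha^{3/4}+\beta^{3/4}},
\]
which is precisely (a constant multiple of) the exponent appearing on the right-hand side of \eqref{quotient}, and which \eqref{Psi -1 1} shows grows at least like $\alpha^{\eta/5}$ or $\beta^{\eta/5}$. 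So $\Re P_{\lambda,N}$ is not uniformly bounded on $J_\beta'$; it blows up at the same rate as the gain you are trying to prove. Your claimed lower bound $\|\xi e^{-P_{\lambda,N}}\|^2 \gtrsim \Delta_\beta$ therefore does not follow. The paper avoids this by restricting the lower-bound integral to a strictly smaller interval $[x_\beta, x_\beta+\widetilde{\Delta}_\beta]$ with $\widetilde{\Delta}_\beta := \tau(x_\beta)^{-4/5}\beta^{-1/20} < \Delta_\beta$, on which one cannot show $|\Re P_{\lambda,N}|\lesssim 1$ but can show (using \eqref{xi122}, \eqref{Psi k 2}, and the constraints on $\alpha$) that $\sum_k\int_{x_\beta}^{x_\beta+\widetilde{\Delta}_\beta}|\Re(\lambda^{-k}\psi_k^{(1)})|\,dt = o\bigl(\Im V^{(1)}(x_\beta)\tau(x_\beta)^{-2}/(\alpha^{3/4}+\beta^{3/4})\bigr)$ as $\beta\to\infty$. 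This gives a denominator bound $\gtrsim \widetilde{\Delta}_\beta\exp\bigl(-o(\text{exponent})\bigr)$, so the quotient is $\lesssim \widetilde{\Delta}_\beta^{-1/2}\exp(-(C-o(1))\cdot\text{exponent})$, and a final extra step is needed to absorb $\widetilde{\Delta}_\beta^{-1/2}$ into the exponential. Your proposal misses both the shrinking to $\widetilde{\Delta}_\beta$ and the absorption of $\widetilde{\Delta}_\beta^{-1/2}$; as written, the lower bound for the denominator you assert is unjustified and the conclusion does not follow.
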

\begin{proof}
Following are the lines of steps to prove the Theorem \ref{Theorem 2}.
\begin{enumerate}[label=\textbf{(\arabic*)}]
\item  We begin the proof by showing that for sufficiently large $\beta>0$, the set of admissible $\alpha$ in \eqref{alpha 1} and \eqref{alpha 2} is non-empty.
\item Under these conditions for $\alpha$, we can show that, there exists a constant $C>0$ such that
\begin{equation}\label{numerator}
\left\Vert (\mathcal{D}_{\lambda,N} \xi) \exp(-P_{\lambda,N}) \right\Vert_{L^2\left(\R_{0}^{+}\right)}\lesssim \exp\left(-C \frac{\Im V^{(1)}(x_{\beta})\tau(x_{\beta})^{-2}}{\alpha^{\frac{3}{4}}+\beta^{\frac{3}{4}}}\right).
\end{equation}
\item Since the support $J_{\beta}$ of pseudomode makes a move as $\beta\to +\infty$, we can not bound below the denominator $\left\Vert \xi \exp\left( -P_{\lambda,N}\right) \right\Vert_{L^2\left(\R_{0}^{+}\right)}^{2}$ by a constant as in [Prop.~\ref{Prop Exponential Decay},Step~\ref{Step 1 1}]. However, we can bound it below by a small term whose inverse can be controlled by the right hand side of \eqref{numerator}. Then it yields \eqref{quotient}.
\end{enumerate}
Before entering the details of the proof, for simplifying the later computation, let us write 
$$4+\varepsilon=\frac{4}{1-\eta}$$
as in \eqref{Re Im 1} where $\eta_{1}=\eta \in (0,1)$. Then, the condition \eqref{xi120} is rewritten in terms of $\eta$ as follows
\begin{equation}\label{xi12}
\left(t_{1},t_{2}\right)\in \left[0,\frac{\eta}{5(3+\eta)}\right]^2, \qquad t_{1}-\frac{1-\eta}{4}t_{2}<\frac{\eta}{20}.
\end{equation}
Similarly, the condition \eqref{alpha 20} for $\alpha$ is revised to  
\begin{equation}\label{alpha 2}
\left[\beta\tau(x_{\beta})\right]^{\frac{4}{5}}\lesssim \vert \alpha \vert \lesssim\left[ \beta\tau(x_{\beta})^{-1}\right]^{\frac{4}{3+\eta}}.
\end{equation}
Following are the details of the proof:
\begin{enumerate}[label=\textbf{(\arabic*)}]
\item In order to show that the set of $\alpha$ satisfying \eqref{alpha 1} and \eqref{alpha 2} is non-empty, we can choose, for instance, 
\[ \alpha=\alpha(\beta):=\left[ \beta\tau(x_{\beta})^{-1}\right]^{\frac{4}{3+\eta}}.\]
Then, from \eqref{Re Im 1} which is a direct consequence of the assumption \eqref{Assump Bound Re Im}, we have
\begin{equation}\label{beta 1}
\tau(x_{\beta})^{4}\lesssim \beta^{1-\eta}, \qquad \beta \lesssim \left[ \beta\tau(x_{\beta})^{-1}\right]^{\frac{4}{3+\eta}},\qquad \left\vert \Re V(x) \right\vert^{\frac{1}{1-\eta}} \lesssim \left[ \beta\tau(x_{\beta})^{-1}\right]^{\frac{4}{3+\eta}},
\end{equation}
and thus, we deduce \eqref{alpha 2} from
\[ \left[\beta\tau(x_{\beta})\right]^{\frac{4}{5}}\lesssim \beta \lesssim \left[ \beta\tau(x_{\beta})^{-1}\right]^{\frac{4}{3+\eta}}. \]
Furthermore, \eqref{alpha 1} is also followed from \eqref{beta 1}:
\[ \vert \Re V(x)\vert\lesssim \alpha^{1-\eta},\qquad  x\in J_{\beta}.\]
Therefore, this choice of $\alpha$ satisfies \eqref{alpha 1} and \eqref{alpha 2}.
\item Following the work of Lemma~\ref{Lemma psi k} and Proposition~\ref{Prop Exponential Decay}, we firstly perform the estimate for the real part of the eikonal term $\displaystyle \int_{x_{\beta}}^{x}  \lambda \psi_{-1}^{(1)}(t)\, \dd t$ and then prove that the other transport terms $\displaystyle \int_{x_{\beta}}^{x} \lambda^{k} \psi_{k}^{(1)}(t)\, \dd t$, for $k\in [[0,N-1]]$ play less important roles than the eikonal one. Let us recall that
\begin{align*}
\mathrm{Re}\, \left(\lambda \psi_{-1}^{(1)}\right) = \frac{1}{2}\frac{\Im V-\beta}{\left( \vert V_{\lambda}\vert +\alpha-\Re V \right)^{1/2}\left(\vert V_{\lambda}\vert^{1/2}+\frac{1}{\sqrt{2}}\left(\vert V_{\lambda}\vert+\alpha-\Re V\right)^{1/2}\right)^{1/2}}.
\end{align*}
Thanks to \eqref{alpha 1} and \eqref{Comparable}, we have
\begin{align*}
\left( \vert V_{\lambda}\vert +\alpha-\Re V \right)^{1/2}\left(\vert V_{\lambda} \vert^{1/2}+\frac{1}{\sqrt{2}}\left(\vert V_{\lambda} \vert+\alpha-\Re V\right)^{1/2}\right)^{1/2}\approx \vert\alpha \vert^{\frac{3}{4}}+\beta^{\frac{3}{4}}.
\end{align*}
By observing the sign of the term $\Im V(x)-\beta$ on the left and on the right of $x_{\beta}$ on~$J_{\beta}$, it implies that, for all $x\in J_{\beta}$,
\begin{equation}\label{Psi -1 2}
\int_{x_{\beta}}^{x} \mathrm{Re}\, \left(\lambda \psi_{-1}^{(1)}(t)\right)\, \dd t \approx \frac{\displaystyle \int_{x_{\beta}}^{x} \left[\Im V(t)-\beta\right]\, \dd t}{\vert \alpha \vert^{\frac{3}{4}}+\beta^{\frac{4}{4}}}\approx \frac{\Im V^{(1)}(x_{\beta})(x-x_{\beta})^2}{\vert \alpha \vert^{\frac{3}{4}}+\beta^{\frac{4}{4}}}.
\end{equation}
Here in the last estimate, we changed variable twice in integrals and employing \eqref{Comparable}, in detail, that is, for all $x\in J_{\beta}$,
\begin{align*}
 \int_{x_{\beta}}^{x} \Im V(t)-\beta\, \dd t &=(x-x_{\beta})^2 \int_{0}^1 \int_{0}^1 \xi \Im V^{(1)}\left(x_{\beta}+ \tau \xi(x-x_{\beta}) \right)\, \dd \tau \dd \xi\\
 &\approx \Im V^{(1)}(x_{\beta})(x-x_{\beta})^2.
\end{align*}
On $J_{\beta}\setminus J_{\beta}'$, all $x$ stays away from the turning point $x_{\beta}$ a distance~$\Delta_{\beta}=\frac{\eta}{\tau(x_{\beta})}$, hence, for every $x\in J_{\beta}\setminus J_{\beta}'$,
\begin{align}
\int_{x_{\beta}}^{x} \mathrm{Re}\, \left(\lambda \psi_{-1}^{(1)}(t)\right)\, \dd t &\gtrsim \frac{\Im V^{(1)}(x_{\beta})\tau(x_{\beta})^{-2}}{\vert\alpha \vert^{\frac{3}{4}}+\beta^{\frac{3}{4}}} \nonumber\\
&\gtrsim \left\{
\begin{aligned}
&\vert \alpha \vert^{\frac{\eta}{5}}\beta^{\frac{\eta}{5(3+\eta)}-t_{1}}\tau(x_{\beta})^{t_{2}-\frac{\eta}{5(3+\eta)}} &&\text{ if } \vert\alpha \vert>\beta,\\
&\beta^{\frac{1}{4}-t_{1}}\tau(x_{\beta})^{t_{2}-1} &&\text{ if } \vert \alpha \vert \leq \beta.\\
\end{aligned}
\right.\label{Psi -1 0}
\end{align}
In the second inequality, we used \eqref{V1} and \eqref{alpha 2}. Notice that, from the assumption \eqref{xi12}, the powers of $\beta$ and $\tau$ are related by the following inequalities
\begin{align*}
\frac{\eta}{5(3+\eta)}-t_{1} >\frac{1-\eta}{4}\left(\frac{\eta}{5(3+\eta)}-t_{2}\right),\qquad \frac{1}{4}-t_{1} &> \frac{1-\eta}{4}\left(1-t_{2}\right)+\frac{\eta}{5}.
\end{align*}
Combine this with the first inequality in \eqref{beta 1} and the fact~$0\leq \frac{\eta}{5(3+\eta)}-t_{2}<1-t_{2}$, we obtain, for all $x\in J_{\beta}\setminus J_{\beta}'$,
\begin{equation}\label{Psi -1 1}
\int_{x_{\beta}}^{x} \mathrm{Re}\, \left(\lambda \psi_{-1}^{(1)}(t)\right)\, \dd t\gtrsim \frac{\Im V^{(1)}(x_{\beta})\tau(x_{\beta})^{-2}}{\vert\alpha \vert^{\frac{3}{4}}+\beta^{\frac{3}{4}}}\gtrsim \left\{ \begin{aligned}
&\vert \alpha \vert^{\frac{\eta}{5}}&&\text{ if } \vert\alpha \vert>\beta,\\
&\beta^{\frac{\eta}{5}} &&\text{ if } \vert \alpha \vert \leq \beta.
\end{aligned}
\right.
\end{equation}
Next, in the same manner of proving~\eqref{Eq Estimate Re k}, by the choice of $\alpha$ in \eqref{alpha 1} and \eqref{alpha 2} collaborating with the first inequality in \eqref{beta 1}, we can show that, for $k\in [[-1,N-1]]$ and $m\in [[1,3]]$ such that $k+m\geq 1$, and for all $t\in J_{\beta}$,
\begin{align}\label{Psi k}
\left\vert \lambda^{-k}\psi_{k}^{(m)}(t) \right\vert  &\lesssim  \sum_{j=1}^{k+m} \frac{\tau^{k+m}(t)\vert V(t) \vert^{j}}{\vert V_{\lambda}\vert^{j+\frac{k}{4}}}\lesssim \frac{\tau(x_{\beta})^{k+m}}{\vert \alpha \vert^{\frac{k}{4}}}  \sum_{j=1}^{k+m} \left(1+\frac{\beta}{\vert\alpha \vert}\right)^{j}\nonumber\\
&\lesssim \frac{\tau(x_{\beta})^{k+m}}{\vert \alpha \vert^{\frac{k}{4}}}\left(1+\frac{\beta}{\vert \alpha \vert}\right)^{k+m}\nonumber\\
&\lesssim \left\{
\begin{aligned}
&\left[\frac{\tau(x_{\beta})}{\beta^{\frac{1}{4}}} \right]^{k}\tau(x_{\beta})^{m} &&\text{ if } \vert \alpha \vert> \beta,\\
& \left[\frac{\beta \tau(x_{\beta})}{\vert \alpha \vert^{\frac{5}{4}}}\right]^{k+\frac{4m}{5}}  [\beta \tau(x_{\beta})]^{\frac{m}{5}} &&\text{ if } \vert \alpha \vert \leq \beta,
\end{aligned}
\right.\nonumber\\
&\lesssim \left\{
\begin{aligned}
&\beta^{-\frac{k\eta}{4}}\tau(x_{\beta})^{m} &&\text{ if } \vert \alpha \vert> \beta,\\
&  [\beta \tau(x_{\beta})]^{\frac{m}{5}} &&\text{ if } \vert \alpha \vert \leq \beta.
\end{aligned}
\right.
\end{align}
In particular, for $k\geq 0$ and at $m=1$, by employing \eqref{Psi -1 1} for the case $\vert \alpha\vert>\beta$ and \eqref{Psi -1 0} for the other case, we obtain, for all $x\in J_{\beta}\setminus J_{\beta}'$,
\begin{equation*}
\begin{aligned}
\frac{ \displaystyle \left\vert\int_{x_{\beta}}^{x}  \lambda^{-k}\psi_{k}^{(1)}(t)\, \dd t \right\vert }{\displaystyle \frac{\Im V^{(1)}(x_{\beta})\tau(x_{\beta})^{-2}}{\alpha^{\frac{3}{4}}+\beta^{\frac{3}{4}}}}  &\lesssim \left\{
\begin{aligned}
& \vert \alpha\vert^{-\frac{\eta}{5}}\qquad &&\text{ if } \vert \alpha \vert> \beta,\\
& \left[\beta^{\frac{1}{20}-t_{1}}\tau(x_{\beta})^{t_{2}-\frac{1}{5}}\right]^{-1}\qquad &&\text{ if } \vert \alpha \vert \leq \beta.
\end{aligned}
\right.\\
\end{aligned}
\end{equation*}
Furthermore, in the case $\vert \alpha\vert\leq \beta$, we notice that
\begin{equation}\label{xi122}
\frac{1}{20}-t_{1}=\frac{1-\eta}{4}\left(\frac{1}{5}-t_{2}\right)+\frac{\eta}{20}-\left(t_{1}-\frac{1-\eta}{4}t_{2}\right).
\end{equation}
Accordingly, for all $k\in [[0,N-1]]$ and for all $x\in J_{\beta}\setminus J_{\beta}'$, we get
\begin{align}
\frac{ \displaystyle \left\vert\int_{x_{\beta}}^{x}  \lambda^{-k}\psi_{k}^{(1)}(t)\, \dd t \right\vert }{\displaystyle \int_{x_{\beta}}^{x} \mathrm{Re}\,(\lambda\psi_{-1}^{(1)}(t)) \, \dd t} &\lesssim \frac{ \displaystyle \left\vert\int_{x_{\beta}}^{x}  \lambda^{-k}\psi_{k}^{(1)}(t)\, \dd t \right\vert }{\displaystyle \frac{\Im V^{(1)}(x_{\beta})\tau(x_{\beta})^{-2}}{\alpha^{\frac{3}{4}}+\beta^{\frac{3}{4}}}} \nonumber\\
&\lesssim \left\{
\begin{aligned}
& \vert \alpha\vert^{-\frac{\eta}{5}} &&\text{ if } \vert \alpha \vert> \beta,\\
& \beta^{-\left[ \frac{\eta}{20}-\left(t_{1}-\frac{1-\eta}{4}t_{2}\right)\right]} &&\text{ if } \vert \alpha \vert \leq \beta.
\end{aligned}
\right. \label{Psi k 2}
\end{align}
Therefore, for all $x\in J_{\beta}\setminus J_{\beta}'$, we obtain (with some constant $C_{1}>0$)
\[\left\vert \exp\left(-P_{\lambda,N}(x)\right)\right\vert\lesssim \exp\left(-C_{1} \frac{\Im V^{(1)}(x_{\beta})\tau(x_{\beta})^{-2}}{\alpha^{\frac{3}{4}}+\beta^{\frac{3}{4}}}\right).\]
By using \eqref{Psi k} and the fact $\left\vert \lambda \psi_{-1}^{(1)} \right\vert = \left\vert V_{\lambda} \right\vert^{\frac{1}{4}}\lesssim \vert \alpha \vert^{\frac{1}{4}}+\beta^{\frac{1}{4}}$, we have, for all $x\in J_{\beta}$,
\[\left\vert P_{\lambda,N}^{(m)}(x)\right\vert \lesssim \left\{\begin{aligned}
&\vert \alpha\vert^{\frac{m}{4}} &&\text{ if } \vert \alpha \vert>\beta,\\
&\beta^{\frac{m}{4}} &&\text{ if } \vert \alpha \vert\leq \beta.
\end{aligned}
\right.  \]
With the help of \eqref{Psi -1 1}, we can control all appearing polynomial terms in $D_{\lambda,N}\xi$ to get the estimate \eqref{numerator}.
\item In this step, we will check that $\left\Vert \xi \exp\left( -P_{\lambda,N}\right) \right\Vert_{L^2\left(\R_{0}^{+}\right)}$ is not too small. To do that, we set $\widetilde{\Delta}_{\beta}=\tau(x_{\beta})^{-\frac{4}{5}}\beta^{-\frac{1}{20}}$. Then, by \eqref{beta 1}, we have $\widetilde{\Delta}_{\beta}<\Delta_{\beta}$ and thus
\begin{align*}
\left\Vert \xi \exp\left( -P_{\lambda,N}\right) \right\Vert_{L^2\left(\R_{0}^{+}\right)}^{2}&\geq \int_{x_{\beta}}^{x_{\beta}+\widetilde{\Delta}_{\beta}}\exp\left(-2\sum_{k=-1}^{N-1}  \left\vert\int_{x_{\beta}}^{x} \Re \left(\lambda^{-k}\psi_{k}^{(1)}(t)\right)\, \dd t \right\vert\right)\, \dd x\\
&\geq \widetilde{\Delta}_{\beta} \exp\left(-2\sum_{k=-1}^{N-1}  \int_{x_{\beta}}^{x_{\beta}+\widetilde{\Delta}_{\beta}} \left\vert\Re \left(\lambda^{-k}\psi_{k}^{(1)}(t)\right)\right\vert\, \dd t \right).
\end{align*}
For the integral of $\Re \left( \lambda \psi_{-1}\right)$, we make use of \eqref{Psi -1 2} for $\vert \alpha\vert>\beta$ and the fact $ \left\vert \Re \left(\lambda \psi_{-1}^{(1)}\right)\right\vert\leq  \left\vert V_{\lambda}\right\vert^{\frac{3}{4}}\lesssim \beta^{\frac{3}{4}}$ for $\vert \alpha\vert\leq \beta$, then we have
\begin{equation*}
\int_{x_{\beta}}^{x_{\beta}+\widetilde{\Delta}_{\beta}} \left\vert \Re \left(\lambda \psi_{-1}^{(1)}(t)\right)\right\vert\, \dd t \lesssim \left\{
\begin{aligned}
&\vert \alpha \vert^{-\frac{3}{4}}\Im V^{(1)}(x_{\beta})\left(\widetilde{\Delta}_{\beta}\right)^2 &&\text{ if } \vert \alpha \vert>\beta,\\
&\beta^{\frac{1}{4}}\widetilde{\Delta}_{\beta} &&\text{ if } \vert \alpha \vert\leq\beta.
\end{aligned}
\right.
\end{equation*}
Thus, by the definition of $\widetilde{\Delta}_{\beta}$ and \eqref{xi122} combining with the first inequality in \eqref{beta 1}, we obtain
\begin{align*}
\frac{\displaystyle\int_{x_{\beta}}^{x_{\beta}+\widetilde{\Delta}_{\beta}} \left\vert \Re \left(\lambda \psi_{-1}^{(1)}(t)\right)\right\vert\, \dd t }{\displaystyle\frac{\Im V^{(1)}(x_{\beta})\tau(x_{\beta})^{-2}}{\alpha^{\frac{3}{4}}+\beta^{\frac{3}{4}}}}&\lesssim \left\{
\begin{aligned}
&\left[\tau(x_{\beta})^4 \beta^{-1}\right]^{\frac{1}{10}} &&\text{ if } \vert \alpha \vert> \beta,\\
&\left[\beta^{\frac{1}{20}-t_{1}}\tau(x_{\beta})^{t_{2}-\frac{1}{5}}\right]^{-1} &&\text{ if } \vert \alpha \vert \leq \beta,
\end{aligned}
\right.\\
&\lesssim \left\{
\begin{aligned}
&\beta^{-\frac{\eta}{10}} &&\text{ if } \vert \alpha \vert> \beta,\\
&\beta^{-\left[ \frac{\eta}{20}-\left(t_{1}-\frac{1-\eta}{4}t_{2}\right)\right]} &&\text{ if } \vert \alpha \vert \leq \beta.
\end{aligned}
\right.
\end{align*}
For $k\in [[0,N-1]]$, we use \eqref{Psi k 2} to get
\begin{align*}
\frac{\displaystyle \int_{x_{\beta}}^{x_{\beta}+\widetilde{\Delta}_{\beta}}\left\vert \Re \left(\lambda^{-k}\psi_{k}^{(1)}(t)\right)\right\vert\, \dd t }{\displaystyle\frac{\Im V^{(1)}(x_{\beta})\tau(x_{\beta})^{-2}}{\alpha^{\frac{3}{4}}+\beta^{\frac{3}{4}}}}\lesssim \left\{
\begin{aligned}
& \vert \alpha\vert^{-\frac{\eta}{5}} &&\text{ if } \vert \alpha \vert> \beta,\\
& \beta^{-\left[ \frac{\eta}{20}-\left(t_{1}-\frac{1-\eta}{4}t_{2}\right)\right]} &&\text{ if } \vert \alpha \vert \leq \beta.
\end{aligned}
\right.
\end{align*}
From the above estimates, we have shown that
\[ \sum_{k=-1}^{N-1}  \int_{x_{\beta}}^{x_{\beta}+\widetilde{\Delta}_{\beta}} \left\vert\Re \left(\lambda^{-k}\psi_{k}^{(1)}(t)\right)\right\vert\, \dd t =o\left( \frac{\Im V^{(1)}(x_{\beta})\tau(x_{\beta})^{-2}}{\alpha^{\frac{3}{4}}+\beta^{\frac{3}{4}}}\right),\qquad \beta \to +\infty.\]
Therefore, we have
\begin{align*}
\frac{\left\Vert (\mathcal{D}_{\lambda,N} \xi) \exp(-P_{\lambda,N}) \right\Vert_{L^2\left(\R_{0}^{+}\right)} }{\left\Vert \xi \exp(-P_{\lambda,N})\right\Vert_{L^2\left(\R_{0}^{+}\right)} } &\lesssim \widetilde{\Delta}_{\beta}^{-\frac{1}{2}}\exp\left(-\left(C-o(1)\right) \frac{\Im V^{(1)}(x_{\beta})\tau(x_{\beta})^{-2}}{\alpha^{\frac{3}{4}}+\beta^{\frac{3}{4}}}\right).
\end{align*}
Thus, \eqref{quotient} is followed directly by using \eqref{beta 1} and \eqref{Psi -1 1} to control the term $\widetilde{\Delta}_{\beta}^{-\frac{1}{2}}$.
\end{enumerate}
\end{proof}
\subsection{Remainder estimate (Proof of Theorem \ref{Theorem 2})}
By using the same trick as proving \eqref{Eq v assymptotic}, we can show that
\begin{equation}\label{V comparible}
\vert V(x)\vert \approx \vert V(x_{\beta}) \vert, \qquad  x\in J_{\beta}.
\end{equation}
Indeed, since $V$ is a complex-valued function, we should be careful with some steps when we perform estimation, more precisely, for all $h\in \R$ such that $\vert h \vert\leq 2\Delta_{\beta}$, we have
\begin{align*}
\left\vert \ln \frac{\vert V(x_{\beta}+h) \vert}{\vert V(x_{\beta}) \vert}\right\vert \leq \left\vert \textup{Log } V(x_{\beta}+h)-  \textup{Log } V(x_{\beta}) \right\vert= \left\vert \int_{V(x_{\beta})}^{V(x_{\beta}+h)} \frac{\dd z}{z} \right\vert=\left\vert\int_{x_{\beta}}^{x_{\beta}+h} \frac{V'(t)}{V(t)}\, \dd t\right\vert\lesssim 1,
\end{align*}
in which $\textup{Log } z$ is the principal branch of the logarithmic function   with its antiderivative $\frac{1}{z}$ and keep in mind that the range $V(J_{\beta})$ stays away the negative semi-axis $\R_{0}^{-}$, and the last inequality is estimated as same as proving \eqref{Eq v assymptotic} by using \eqref{Assump G Der}.

We finish the proof by estimating the remainder whose shape is given in Lemma \ref{Lem Reminder Estimate}, for $N\geq 1$ and for all $x\in J_{\beta}$,
\begin{align*}
\left\vert \mathcal{R}_{\lambda, N}(x)\right\vert &\lesssim\sum_{k=0}^{3N-1}\frac{1}{\left\vert V_{\lambda}(x)\right\vert^{\frac{k+N-3}{4}}} \sum_{j=1}^{k+N+1}\frac{\left\vert d_{k+N+1,j}(x)\right\vert}{\left\vert V_{\lambda}(x)\right\vert^{j}}\\
&\lesssim\sum_{k=0}^{3N-1}\sum_{j=1}^{k+N+1}\frac{\tau(x)^{k+N+1} \left\vert V(x)\right\vert^{j}}{\left\vert V_{\lambda}(x)\right\vert^{\frac{k+N-3}{4}+j}}\\
&\lesssim\sum_{k=0}^{3N-1}\sum_{j=1}^{k+N+1}\frac{\tau(x_{\beta})^{k+N+1}\left(\left\vert \Re V(x_{\beta})\right\vert^{j}+\beta^{j}\right)}{\left\vert \alpha\right\vert^{\frac{k+N-3}{4}+j}}.
\end{align*}
Here, we employed \eqref{Assump G Der} to control $d_{k+N+1,j}$, \eqref{alpha 1} for $V_{\lambda}$ in the denominator, \eqref{beta 1} for function $\tau$ and \eqref{V comparible} for $V$. The remainder $\mathcal{R}_{\lambda,0}$ is estimated analogously.

\subsection{Pseudomode for semi-classical biharmonic operator (Proof of Theorem \ref{Theorem 3})}
Since $\Re z-\Re W(x_{0})=\mu>0$, there exists an interval centered at $x_{0}$ included in $I$, denoted as $J\coloneqq (x_{0}-2\Delta,x_{0}+2\Delta)\subset I$, such that the function $\Re z-\Re W(t)$ is positive in $J$.  Without loss of generality, we assume further that $\Im W(x)-\Im W(x_{0})<0$ for all $x\in (x_{0}-2\Delta,x_{0})$ and $\Im W(x)-\Im W(x_{0})>0$ for all $x\in (x_{0},x_{0}+2\Delta)$. Let us write our semi-classical problem in our previous setting by factoring the parameter $h^{4}$ out
\[ H_{h}-z = h^{4}\left(\frac{\dd^4}{\dd x^4}+ V_{h}(x)-\lambda_{h}\right),\]
where $V_{h}(x):=h^{-4}W(x)$ and $\lambda_{h}:=h^{-4}z$. Being inspired by the above analysis, the pseudomode is achieved around the point $x_{0}$ satisfying the equation $\Im V_{h}(x_{0})=\Im \lambda_{h}$, \emph{i.e.}~$\Im W(x_{0})= \Im z$. We should keep in mind that the point $x_{0}$ here is fixed. We set up the pseudomode as follows 
\[ \Psi_{h,N} = \xi \exp\left(-P_{h,N} \right),\]
in which
\begin{itemize}
\item $\xi\in  \mathcal{C}_{0}^{\infty}(\R)$ is a cut-off function which is equal $1$ on  $J'\coloneqq \left(x_{0}-\Delta,x_{0}+\Delta\right)$ and equal $0$ in the complement of $J$ in $\R$,
\item $\displaystyle P_{h,N}(x) \coloneqq  \sum_{k=-1}^{N-1}  \int_{x_{0}}^{x} \lambda_{h}^{-k}\psi_{k}^{(1)}(t) \, \dd t$ where $\psi_{-1}^{(1)}$ is determined by \eqref{Eq Eikonal Solution} with the plus sign and $\left(\psi_{k}^{(1)}\right)_{k\in[[0,N-1]]}$ is determined by \eqref{Eq Transport Solution}, in which $V$ is replaced by $V_{h}$ and $\lambda$ is replaced by $\lambda_{h}$. If the above sign of $\Im W(x)-\Im W(x_{0})$ changes inversely on the left and on the right of $x_{0}$, we merely choose the minus sign in \eqref{Eq Eikonal Solution}.
\end{itemize}

Notice that
\[ \lambda_{h}- V_{h}(t)=h^{-4}\left[\left( \Re z- \Re W(t)\right)+i\left(\Im W(x_{0})-\Im W(t)\right)\right],\]
the function $\left(\lambda_{h}-V_{h}\right)^{\frac{1}{4}}$ is well-defined and so are all functions  $\psi_{k}^{(1)}$, for $k\in [[-1,N-1]]$ on~$J$. By the formula of $\mathrm{Re}\, \left(\lambda_{h} \psi_{-1}^{(1)}(t)\right)$ in \eqref{Re psi -1}, we obtain, for all $t\geq x_{0}$,
\begin{align*}
\mathrm{Re}\, \left(\lambda_{h} \psi_{-1}^{(1)}(t)\right) \gtrsim h^{-1}\frac{\Im W(t)-\Im W(x_{0})}{\left(\Re z- \Re W(t) \right)^{\frac{3}{4}}+\vert \Im W(t)-\Im W(x_{0})\vert^{\frac{3}{4}}}, \qquad  t \in (x_{0},x_{0}+2\Delta)
\end{align*}
and
\begin{align*}
\mathrm{Re}\, \left(\lambda_{h} \psi_{-1}^{(1)}(t)\right) \lesssim h^{-1}\frac{\Im W(t)-\Im W(x_{0})}{\left(\Re z- \Re W(t) \right)^{\frac{3}{4}}+\vert \Im W(t)-\Im W(x_{0})\vert^{\frac{3}{4}}}, \qquad  t \in (x_{0}-2\Delta,x_{0}).
\end{align*}
On $J\setminus J'$, the function $\int_{x_{0}}^{x} \left(\Im W(t)-\Im W(x_{0})\right)\, \dd t$ is bounded below by a positive constant, we have
\[ \int_{x_{0}}^{x} \mathrm{Re}\, \left(\lambda_{h} \psi_{-1}^{(1)}(t)\right) \, \dd t\gtrsim h^{-1}.\]
Thanks to the shape of the WKB solutions in Lemma \ref{Lem Transport Solution}, it can be seen that, for all $k\geq -1$ and for all $t\in J$,
\[ \left\vert \lambda_{h}^{-k} \psi_{k}^{(1)}(t)\right\vert \lesssim h^{k}.\]
It yields that the transport terms $\left(\psi_{k}^{(1)}\right)_{k\geq 0}$ are harmless in the expansion of pseudomode and thus there exists $c_{1}>0$ such that, for all $x\in J\setminus J'$,
\[ \left\vert \exp\left( -P_{h,N}(x)\right) \right\vert \lesssim \exp(-c_{1} h^{-1}). \]
By considering on the fixed support $J$, we also obtain (with some $c_{2}>0$)
$$\left\Vert (\mathcal{D}_{\lambda_{h},N} \xi) \exp(-P_{h,N}) \right\Vert_{L^2\left(\R\right)}\lesssim \exp(-c_{2} h^{-1}).$$
Let $\widetilde{\Delta}=h \Delta$ with $h<1$, we have, for all $x\in [x_{0},x_{0}+\widetilde{\Delta}]\subset J'$,
\[ \left\vert P_{h,N}(x) \right\vert \leq \sum_{k=-1}^{N-1} \widetilde{\Delta} h^{k} \lesssim 1.\]
Thus, as in [Prop.~\ref{Prop Exponential Decay}, Step~\ref{Step 1 1}], it implies that
\[ \Vert \Psi_{h}\Vert \gtrsim 1.\]
Concerning the remainder, we have, for all $N\geq 0$ and $x\in J$,
\begin{align*}
h^{4}\left\vert \mathcal{R}_{\lambda, N}(x)\right\vert \lesssim h^{N+1}.
\end{align*}
The conclusion of Theorem \ref{Theorem 3} is followed.

\appendix
\section{The solutions of the transport equations and the WKB~remainder}\label{Appendix}
This appendix is devoted to the proofs of Lemma \ref{Lem Transport Solution} and Lemma \ref{Lem Reminder Estimate}. These lemmata describe the structure of the transport solutions $\left(\psi_{k}^{(1)}\right)_{k\in [[-1,n-1]]}$ and the WKB remainder $\mathcal{R}_{\lambda,n}$ which are made from the elements of $D_{r,j}$ defined in Notation \ref{Nota Derivative}. First, let us recall here some simple observations about $D_{r,j}$ which are mentioned in~\cite[\textbf{Appendix A}]{KS19}.
\begin{enumerate}[label=\textbf{(\arabic*)}]
\item \label{Appendix R1} If $r\geq 1$, then $D_{r,0}=\{0\}$. If $r<j$, then $D_{r,j}=\{0\}$.
\item \label{Appendix R2} $D_{0,0}=\C$.
\item \label{Appendix R3} $D_{r,j}+D_{r,j} = D_{r,j}$.
\item \label{Appendix R4} $cD_{r,j}= D_{j}^{r,s}$ for any constant $c\in \C$.
\item \label{Appendix R5} $D_{r_1,j_{1}} D_{r_2,j_{2}} \subset D_{r_1+r_2,j_1+j_2}$.
\end{enumerate}

\begin{proof}[Proof of Lemma \ref{Lem Transport Solution}]
The induction method is employed to prove the following statement with respect to the index $k\in [[-1,n-1]]$:
\begin{equation}\label{Eq Inductive State}
\psi_{k}^{(m)} \in \frac{\lambda^{k}}{V_{\lambda}^{\frac{k}{4}}} \sum_{j=0}^{k+m} \frac{D_{k+m,j}}{V_{\lambda}^{j}},\qquad m\in [[1,n+3-k]].
\end{equation}
\textbf{Base step}: We check that the statement is true for $k=-1$. It is a direct application of Fa\`{a}~di~Bruno's formula for the high derivative of the composition of the functions $f(x):=x^{\frac{1}{4}}$ and $g(x):=V_{\lambda}(x)$, for $\ell\in \N_{1}$,
\[ \frac{\dd^{\ell}}{\dd x^{\ell}} f(g(x)) = \sum_{j=1}^{\ell} f^{(j)}(g(x))B_{\ell,j}\left(g^{(1)}(x),g^{(2)}(x),\ldots,g^{(\ell-j+1)}(x)\right).\]
Here $B_{\ell,j}$ are Bell polynomials which have formulae
\[ B_{\ell,j}\left(x_1,x_2,\ldots,x_{\ell-j+1}\right)=\sum_{\alpha\in I_{\ell.j}} \frac{\ell!}{\alpha_{1}!\alpha_{2}!\ldots\alpha_{\ell-j+1}!}\left( \frac{x_1}{1!}\right)^{\alpha_{1}}\left( \frac{x_2}{2!}\right)^{\alpha_{2}}\ldots\left( \frac{x_{\ell-j+1}}{(\ell-j+1)!}\right)^{\alpha_{\ell-j+1}},\]
where $\alpha=(\alpha_{1},\alpha_{2},\ldots,\alpha_{\ell-j+1})$ and $I_{\ell,j}$ is the set defined in \eqref{Notation Irj}. It is not difficult to see that $f^{(j)}(x) = c_{j} x^{\frac{1}{4}-j}$ for some $c_{j}\in \R$ and 
$$B_{\ell,j}\left(g^{(1)}(x),g^{(2)}(x),\ldots,g^{(\ell-j+1)}(x)\right)\in D_{\ell,j}.$$
By plugging these objects in the formula of $\frac{\dd^{\ell} }{\dd x^{\ell}}\psi_{-1}^{(1)}=i\lambda^{-1}\frac{\dd^{\ell} }{\dd x^{\ell}} V_{\lambda}^{\frac{1}{4}}$, regarding the rule \ref{Appendix R1}, it implies that, for all $\ell\in \N_{1}$,
\[ \psi_{-1}^{(\ell+1)} \in \frac{\lambda^{-1}}{V_{\lambda}^{-\frac{1}{4}}} \sum_{j=0}^{\ell} \frac{D_{\ell,j}}{V_{\lambda}^{j}}.\]
Thanks to the rule \ref{Appendix R2}, the claim in the case $k=-1$ is confirmed, for all $m\in \N_{1}$,
\[ \psi_{-1}^{(m)} \in \frac{\lambda^{-1}}{V_{\lambda}^{-\frac{1}{4}}} \sum_{j=0}^{m-1} \frac{D_{m-1,j}}{V_{\lambda}^{j}}.\]
Since $V\in W^{n+3,2}_{\mathrm{loc}}(\R)$, the maximal order of the derivative that $\psi_{-1}$ can be taken is $n+4$, in which $V^{(n+3)}$ appears in $D_{n+3,1}$ (see Notation \ref{Nota Derivative}).\\
\textbf{Inductive step}: Let $q\in [[-1,n-2]]$, we assume that \eqref{Eq Inductive State} holds for all $k\in [[-1,q]]$, we need to show that
\begin{equation}\label{Eq Inductive k+1}
\psi_{q+1}^{(m)} \in \frac{\lambda^{q+1}}{V_{\lambda}^{\frac{q+1}{4}}} \sum_{j=0}^{q+m+1} \frac{D_{q+1+m,j}}{V_{\lambda}^{j}},\qquad m\in [[1,n+2-q]].
\end{equation}
From the formula \eqref{Eq Transport Solution}, by using the Leibniz product rule, we obtain
\begin{align*}
\psi_{q+1}^{(m)} = &\left( \psi_{q+1}^{(1)} \right)^{(m-1)}=\sum_{i=0}^{m-1}\begin{pmatrix}
 m-1\\
 i
\end{pmatrix} \left[\frac{1}{4\left(\psi_{-1}^{(1)}\right)^3}\right]^{(m-1-i)}\times\\
  &\hspace{2.7 cm}\left[\psi_{q-2}^{(4+i)}-4 \sum_{\alpha_{1}+\alpha_{2}=q-2} \left(\psi_{\alpha_{1}}^{(1)}\psi_{\alpha_{2}}^{(3)}\right)^{(i)}-3\sum_{\alpha_{1}+\alpha_{2}=q-2}\left(\psi_{\alpha_{1}}^{(2)}\psi_{\alpha_{2}}^{(2)}\right)^{(i)}\right.\nonumber \\
&\left. \hspace{2cm} +6\sum_{\alpha_{1}+\alpha_{2}+\alpha_{3}=q-2} \left(\psi_{\alpha_{1}}^{(1)}\psi_{\alpha_{2}}^{(1)}\psi_{\alpha_{3}}^{(2)}\right)^{(i)}- \sum_{\substack{\alpha_{1}+\alpha_{2}+\alpha_{3}+\alpha_{4}=q-2\\ \alpha_{1},\alpha_{2},\alpha_{3},\alpha_{4} \neq q+1}} \left(\psi_{\alpha_{1}}^{(1)}\psi_{\alpha_{2}}^{(1)}\psi_{\alpha_{3}}^{(1)}\psi_{\alpha_{4}}^{(1)}\right)^{(i)}\right].
\end{align*}
We compute each term appearing in the above formula:
\begin{enumerate}[label=\textbf{\alph*)}]
\item For the derivatives of $\frac{1}{\left(\psi_{-1}^{(1)}\right)^3}$: we do the same manner as computing the derivatives of $\psi_{-1}^{(1)}$ in the base step by considering the functions $f(x)=x^{-\frac{3}{4}}$ and $g(x)=V_{\lambda}$. Since $f^{(j)}(x)= c_{j} x^{-\frac{3}{4}-j}$ for some $c_j\in \R$, we obtain
\begin{equation}\label{Eq Inverse psi-1}
\left( \frac{1}{\left(\psi_{-1}^{(1)}\right)^3}\right)^{(\ell)} \in \frac{\lambda^3}{V_{\lambda}^{\frac{3}{4}}} \sum_{j=0}^{\ell} \frac{D_{\ell,j}}{V_{\lambda}^{j}},\qquad \ell \in [[0, n+3]].
\end{equation}
\item For the derivatives of $\psi_{q-2}$: for each $m\in [[1,n+2-q]]$, $i\in[[0,m-1]]$, we have $4+i \in[[1, n+3-(q-2)]]$. It means that we can use the induction assumption for the derivatives of $\psi_{q-2}$:
\begin{equation}\label{Eq One alpha}
\psi_{q-2}^{(4+i)} \in \frac{\lambda^{q-2}}{V_{\lambda}^{\frac{q-2}{4}}} \sum_{j=0}^{q-2+4+i} \frac{D_{q-2+4+i,j}}{V_{\lambda}^{j}}=\frac{\lambda^{q-2}}{V_{\lambda}^{\frac{q-2}{4}}} \sum_{j=0}^{q+2+i} \frac{D_{q+2+i,j}}{V_{\lambda}^{j}}.
\end{equation}
\item The derivatives of $\psi_{\alpha_{1}}^{(1)}\psi_{\alpha_{2}}^{(3)}$ and $\psi_{\alpha_{1}}^{(2)}\psi_{\alpha_{2}}^{(2)}$ where $\alpha_1+ \alpha_2=q-2$. By the Leibniz product rule again, we have, for each $i\in [[0,m-1]]$,
\begin{align*}
\left(\psi_{\alpha_{1}}^{(1)}\psi_{\alpha_{2}}^{(3)}\right)^{(i)} = \sum_{\ell=0}^{i} \begin{pmatrix}
i\\
\ell
\end{pmatrix} \psi_{\alpha_{1}}^{(\ell+1)}\psi_{\alpha_{2}}^{(i-\ell+3)}.
\end{align*}
For each  $m\in [[1,n+2-q]]$, $i\in [[0,m-1]]$ and $\ell\in [[0,i]]$, we notice that for $\alpha_{1}\geq -1$, $\alpha_{2}\geq-1$ and $\alpha_{1}+\alpha_{2}=q-2$, we have $\ell+1 \in [[1, n+3-\alpha_{1}]]$ and $i-\ell+3\in [[1,n+3-\alpha_{2}]]$. This enables us to use the induction assumption to rewrite both $\psi_{\alpha_{1}}^{(\ell+1)} $ and $\psi_{\alpha_{2}}^{(i-\ell+3)}$. Then, we arrive at
\begin{align*}
\psi_{\alpha_{1}}^{(\ell+1)}\psi_{\alpha_{2}}^{(i-\ell+3)} &\in \left(\frac{\lambda^{\alpha_{1}}}{V_{\lambda}^{\frac{\alpha_{1}}{4}}} \sum_{j_{1}=0}^{\alpha_{1}+\ell+1} \frac{D_{\alpha_{1}+\ell+1,j_{1}}}{V_{\lambda}^{j_{1}}}\right) \left( \frac{\lambda^{\alpha_{2}}}{V_{\lambda}^{\frac{\alpha_{2}}{4}}} \sum_{j_{2}=0}^{\alpha_{2}+i-\ell+3} \frac{D_{\alpha_{2}+i-\ell+3,j_2}}{V_{\lambda}^{j_2}}\right)\\
&= \frac{\lambda^{\alpha_{1}+\alpha_{2}}}{V_{\lambda}^{\frac{\alpha_{1}+\alpha_{2}}{4}}} \sum_{j=0}^{\alpha_{1}+\alpha_{2}+i+4}\sum_{q_1+q_2=j} \frac{D_{\alpha_{1}+\ell+1,q_1}}{V_{\lambda}^{q_1}}\frac{D_{\alpha_{2}+i-\ell+3,q_2}}{V_{\lambda}^{q_2}}\\
&\subset\frac{\lambda^{q-2}}{V_{\lambda}^{\frac{q-2}{4}}} \sum_{j=0}^{q+2+i}\sum_{q_1+q_2=j} \frac{D_{q+2+i,j}}{V_{\lambda}^{j}}\\
&=\frac{\lambda^{q-2}}{V_{\lambda}^{\frac{q-2}{4}}} \sum_{j=0}^{q+2+i}\frac{D_{q+2+i,j}}{V_{\lambda}^{j}},
\end{align*}
in which we used
\begin{itemize}
\item[-] the rule \ref{Appendix R1} for the first equality, that is $D_{\alpha_{1}+\ell+1,q_1}=\{0\}$ and $D_{\alpha_{2}+i-\ell+3,q_2}=\{0\}$ when $q_1>\alpha_{1}+\ell+1$ and if $q_2> \alpha_{2}+i-\ell+3$,
\item[-] the rule \ref{Appendix R5} and $\alpha_{1}+\alpha_{2}=q-2$ for the inclusion,
\item[-] the rule \ref{Appendix R3} for the second equality.
\end{itemize}
Since the resulting terms do not depend on $\ell$ any more, by the rule \ref{Appendix R3} and \ref{Appendix R4}, it implies that
\begin{equation}\label{Eq Two alpha 1}
\left(\psi_{\alpha_{1}}^{(1)}\psi_{\alpha_{2}}^{(3)}\right)^{(i)} \in  \sum_{\ell=0}^{i} \begin{pmatrix}
i\\
\ell
\end{pmatrix} \frac{\lambda^{q-2}}{V_{\lambda}^{\frac{q-2}{4}}} \sum_{j=0}^{q+i+2}\sum_{q_1+q_2=j} \frac{D_{q+2+i,j}}{V_{\lambda}^{j}} =\frac{\lambda^{q-2}}{V_{\lambda}^{\frac{q-2}{4}}} \sum_{j=0}^{q+2+i}\frac{D_{q+2+i,j}}{V_{\lambda}^{j}}.
\end{equation}
\item In the same manner as above, we have
\begin{equation}\label{Eq other alpha}
\begin{aligned}
\left(\psi_{\alpha_{1}}^{(2)}\psi_{\alpha_{2}}^{(2)}\right)^{(i)} &\in \frac{\lambda^{k-2}}{V_{\lambda}^{\frac{k-2}{4}}} \sum_{j=0}^{q+2+i}\frac{D_{q+2+i,j}}{V_{\lambda}^{j}},\\
\left(\psi_{\alpha_{1}}^{(1)}\psi_{\alpha_{2}}^{(1)}\psi_{\alpha_{3}}^{(2)}\right)^{(i)}&\in \frac{\lambda^{q-2}}{V_{\lambda}^{\frac{q-2}{4}}} \sum_{j=0}^{q+2+i}\frac{D_{q+2+i,j}}{V_{\lambda}^{j}},\\
\left(\psi_{\alpha_{1}}^{(1)}\psi_{\alpha_{2}}^{(1)}\psi_{\alpha_{3}}^{(1)}\psi_{\alpha_{4}}^{(1)}\right)^{(i)} &\in \frac{\lambda^{q-2}}{V_{\lambda}^{\frac{q-2}{4}}} \sum_{j=0}^{q+2+i}\frac{D_{q+2+i,j}}{V_{\lambda}^{j}}.
\end{aligned}
\end{equation}
\end{enumerate}
To finish the proof, we put \eqref{Eq Inverse psi-1}, \eqref{Eq One alpha}, \eqref{Eq Two alpha 1} and \eqref{Eq other alpha} together into the formula of $\psi_{q+1}^{(m)}$, we obtain
\begin{align*}
\psi_{q+1}^{(m)}\in &\sum_{i=0}^{m-1} \left(\frac{\lambda^{3}}{V_{\lambda}^{\frac{3}{4}}} \sum_{j_{1}=0}^{m-1-i}\frac{D_{m-1-i,j_1}}{V_{\lambda}^{j_1}}\right)\left(\frac{\lambda^{q-2}}{V_{\lambda}^{\frac{q-2}{4}}} \sum_{j_2=0}^{q+2+i} \frac{D_{q+2+i,j_2}}{V_{\lambda}^{j_2}} \right)\\
= & \sum_{i=0}^{m-1} \frac{\lambda^{q+1}}{V_{\lambda}^{\frac{q+1}{4}}} \sum_{j=0}^{q+m+1} \sum_{q_{1}+q_{2}=j} \frac{D_{m-1-i,q_1}}{V_{\lambda}^{q_1}} \frac{D_{q+2+i,q_2}}{V_{\lambda}^{q_2}}\\
\subset & \sum_{i=0}^{m-1} \frac{\lambda^{q+1}}{V_{\lambda}^{\frac{q+1}{4}}} \sum_{j=0}^{q+m+1} \sum_{q_{1}+q_{2}=j} \frac{D_{q+m+1,q_1+q_2}}{V_{\lambda}^{q_1+q_2}} \\
=&\frac{\lambda^{q+1}}{V_{\lambda}^{\frac{q+1}{4}}} \sum_{j=0}^{q+m+1}\frac{D_{q+m+1,j}}{V_{\lambda}^{j}},
\end{align*}
where we employed the rule \ref{Appendix R3} for the first membership, the rule \ref{Appendix R1} for the second equality as proving in \eqref{Eq Two alpha 1}, the rule \ref{Appendix R5} for the inclusion and the rule \ref{Appendix R3} for the final equality.
Therefore, the inductive claim in \eqref{Eq Inductive k+1} is proved.
\end{proof}

\begin{proof}[Proof of Lemma \eqref{Lem Reminder Estimate}]
When $n=0$, the conclusion of the Lemma follows from \eqref{Remainder 0} and the fact that $\psi_{-1}^{(1)}=\lambda^{-1}i V_{\lambda}^{\frac{1}{4}}$. When $n>0$, thanks to \eqref{Eq Phi k+3} and \eqref{Eq Reminder R}, the reminder $\mathcal{R}_{\lambda,n}$ can be written in the following way 
\begin{align*}
\mathcal{R}_{\lambda,n} =& \sum_{k=n-3}^{4n-4}\lambda^{-k}\phi_{k+3}\\
=&\sum_{k=n-3}^{4n-4}\lambda^{-k} \left( - \psi_{k}^{(4)}+4 \sum_{\substack{\alpha_{1}+\alpha_{2}=k\\-1\leq \alpha_1,\alpha_2\leq n-1}} \psi_{\alpha_{1}}^{(1)}\psi_{\alpha_{2}}^{(3)}+3\sum_{\substack{\alpha_{1}+\alpha_{2}=k\\-1\leq \alpha_1,\alpha_2\leq n-1}} \psi_{\alpha_{1}}^{(2)}\psi_{\alpha_{2}}^{(2)}\right.\\
&\left.\hspace{2 cm} -6  \sum_{\substack{\alpha_{1}+\alpha_{2}+\alpha_{3}=k\\-1\leq \alpha_1,\alpha_2,\alpha_3\leq n-1}} \psi_{\alpha_{1}}^{(1)}\psi_{\alpha_{2}}^{(1)}\psi_{\alpha_{3}}^{(2)}+ \sum_{\substack{\alpha_{1}+\alpha_{2}+\alpha_3+\alpha_4=k\\-1\leq \alpha_1,\alpha_2,\alpha_3,\alpha_4\leq n-1}} \psi_{\alpha_{1}}^{(1)}\psi_{\alpha_{2}}^{(1)}\psi_{\alpha_{3}}^{(1)}\psi_{\alpha_{4}}^{(1)}\right).
\end{align*}
By using the fact that $\psi_{k}=0$ for all $k>n-1$, we can reduce the indices of the sums as follows
\begin{align*}
\mathcal{R}_{\lambda,n}=&-\underbrace{\sum_{k=n-3}^{n-1}\lambda^{-k} \psi_{k}^{(4)}}_{S_{1}} +4\underbrace{\sum_{k=n-3}^{2n-2}\lambda^{-k}\sum_{\substack{\alpha_{1}+\alpha_{2}=k\\-1\leq \alpha_1,\alpha_2\leq n-1}} \psi_{\alpha_{1}}^{(1)}\psi_{\alpha_{2}}^{(3)}}_{S_{2}}+3\underbrace{\sum_{k=n-3}^{2n-2}\lambda^{-k} \sum_{\substack{\alpha_{1}+\alpha_{2}=k\\-1\leq \alpha_1,\alpha_2\leq n-1}} \psi_{\alpha_{1}}^{(2)}\psi_{\alpha_{2}}^{(2)}}_{S_3}\\
&-6 \underbrace{\sum_{k=n-3}^{3n-3} \lambda^{-k}\sum_{\substack{\alpha_{1}+\alpha_{2}+\alpha_{3}=k\\-1\leq \alpha_1,\alpha_2,\alpha_3\leq n-1}} \psi_{\alpha_{1}}^{(1)}\psi_{\alpha_{2}}^{(1)}\psi_{\alpha_{3}}^{(2)}}_{S_{4}}+ \underbrace{\sum_{k=n-3}^{4n-4}\lambda^{-k}\sum_{\substack{\alpha_{1}+\alpha_{2}+\alpha_3+\alpha_4=k\\-1\leq \alpha_1,\alpha_2,\alpha_3,\alpha_4\leq n-1}} \psi_{\alpha_{1}}^{(1)}\psi_{\alpha_{2}}^{(1)}\psi_{\alpha_{3}}^{(1)}\psi_{\alpha_{4}}^{(1)}}_{S_{5}}.
\end{align*}
For each $k\in [[-1,n-1]]$, Lemma \ref{Lem Transport Solution} shows us that the maximal possible derivative of $V$ in $\psi_{k}^{(1)}$ is $k+1$. Therefore, the maximal possible derivative of $V$ in $\mathcal{R}_{\lambda,n}$ that may appear in~$\psi_{n-1}^{(4)}$ is $n+3$. 

Using Lemma \ref{Lem Transport Solution}, we obtain
\begin{align*}
S_{1} \in  \sum_{k=n-3}^{n-1} \frac{1}{V_{\lambda}^{\frac{k}{4}}}\sum_{j=0}^{k+4}  \frac{D_{k+4,j}}{V_{\lambda}^{j}}=\sum_{k=0}^{2} \frac{1}{V_{\lambda}^{\frac{k+n-3}{4}}} \sum_{j=0}^{k+n+1} \frac{D_{k+n+1,j}}{V_{\lambda}^{j}}.
\end{align*} 
In order to estimate $S_{m}$ for $m\in [[2,5]]$, we do the same trick as proving \eqref{Eq other alpha}. For example, to estimate $S_{2}$, we do as follows
\begin{align*}
S_{2}\in &\sum_{k=n-3}^{2n-2}\lambda^{-k}\sum_{\substack{\alpha_{1}+\alpha_{2}=k\\-1\leq \alpha_1,\alpha_2\leq n-1}} \left(\frac{\lambda^{\alpha_{1}}}{V_{\lambda}^{\frac{\alpha_{1}}{4}}} \sum_{j_{1}=0}^{\alpha_{1}+1}\frac{D_{\alpha_{1}+1,j_{1}}}{V_{\lambda}^{j_{1}}} \right)\left(\frac{\lambda^{\alpha_{2}}}{V_{\lambda}^{\frac{\alpha_{2}}{4}}} \sum_{j_{2}=0}^{\alpha_{2}+3}\frac{D_{\alpha_{2}+3,j_{2}}}{V_{\lambda}^{j_{2}}} \right)\\
=&\sum_{k=n-3}^{2n-2} \frac{1}{V_{\lambda}^{\frac{k}{4}}} \sum_{\substack{\alpha_{1}+\alpha_{2}=k\\-1\leq \alpha_1,\alpha_2\leq n-1}}\sum_{j=0}^{\alpha_{1}+\alpha_{2}+4}\sum_{q_1+q_2=j}\frac{D_{\alpha_{1}+1,q_{1}}}{V_{\lambda}^{q_{1}}} \frac{D_{\alpha_{2}+3,q_{2}}}{V_{\lambda}^{q_{2}}}\\
\subset &  \sum_{k=n-3}^{2n-2} \frac{1}{V_{\lambda}^{\frac{k}{4}}} \sum_{j=0}^{k+4}\frac{D_{k+4,j}}{V_{\lambda}^{j}}=\sum_{k=0}^{n+1}\frac{1}{V_{\lambda}^{\frac{k+n-3}{4}}} \sum_{j=0}^{k+n+1}\frac{D_{k+n+1,j}}{V_{\lambda}^{j}}.
\end{align*}  
Similarly, we obtain
\begin{align*}
&S_{3}\in \sum_{k=0}^{n+1}\frac{1}{V_{\lambda}^{\frac{k+n-3}{4}}} \sum_{j=0}^{k+n+1}\frac{D_{k+n+1,j}}{V_{\lambda}^{j}},\\
&S_{4}\in \sum_{k=0}^{2n}\frac{1}{V_{\lambda}^{\frac{k+n-3}{4}}} \sum_{j=0}^{k+n+1}\frac{D_{k+n+1,j}}{V_{\lambda}^{j}},\\
&S_{5}\in \sum_{k=0}^{3n-1}\frac{1}{V_{\lambda}^{\frac{k+n-3}{4}}} \sum_{j=0}^{k+n+1}\frac{D_{k+n+1,j}}{V_{\lambda}^{j}}.
\end{align*} 
Therefore, thanks to the rules \ref{Appendix R1} (for $j=0$ and $r\geq 1$, $D_{r,j}=\{0\}$) and \ref{Appendix R3}, the conclusion of Lemma \ref{Lem Reminder Estimate} is obtained.
\end{proof}
\section*{Acknowledgement}
I would like to thank Professor David Krej\v{c}i\v{r}\'{i}k for introducing and encouraging me to study this great subject. Especially, I am very grateful to him for giving me many precious opportunities to continue my research career. This project was supported by the EXPRO grant number 20-17749X of the Czech Science Foundation (GA\v{C}R).
\bibliographystyle{abbrv}
\bibliography{Ref21}

\begin{thebibliography}{10}

\bibitem{AS20}
A.~Arifoski and P.~Siegl.
\newblock Pseudospectra of the damped wave equation with unbounded damping.
\newblock {\em SIAM J. Math. Anal.}, 52(2):1343--1362, 2020.

\bibitem{AFL17}
J.~M. Arrieta, F.~Ferraresso, and P.~D. Lamberti.
\newblock Spectral analysis of the biharmonic operator subject to {N}eumann
  boundary conditions on dumbbell domains.
\newblock {\em Integral Equations Operator Theory}, 89(3):377--408, 2017.

\bibitem{YN20}
Y.~Bonthonneau and N.~Raymond.
\newblock W{KB} constructions in bidimensional magnetic wells.
\newblock {\em Math. Res. Lett.}, 27(3):647--663, 2020.

\bibitem{Boulton02}
L.~S. Boulton.
\newblock Non-self-adjoint harmonic oscillator, compact semigroups and
  pseudospectra.
\newblock {\em J. Operator Theory}, 47(2):413--429, 2002.

\bibitem{BL19}
B.~Colbois and L.~Provenzano.
\newblock Neumann eigenvalues of the biharmonic operator on domains: geometric
  bounds and related results.
\newblock arXiv:1907.02252 [math.SP], 2019.

\bibitem{CK20}
L.~Cossetti and D.~Krej\v{c}i\v{r}\'{\i}k.
\newblock Absence of eigenvalues of non-self-adjoint {R}obin {L}aplacians on
  the half-space.
\newblock {\em Proc. Lond. Math. Soc. (3)}, 121(3):584--616, 2020.

\bibitem{Da99}
E.~B. Davies.
\newblock Semi-classical states for non-self-adjoint {S}chr\"{o}dinger
  operators.
\newblock {\em Comm. Math. Phys.}, 200(1):35--41, 1999.

\bibitem{Davies_2007}
E.~B. Davies.
\newblock {\em Linear operators and their spectra}.
\newblock Cambridge University Press, 2007.

\bibitem{Enblom16}
A.~Enblom.
\newblock Estimates for eigenvalues of {S}chr\"{o}dinger operators with
  complex-valued potentials.
\newblock {\em Lett. Math. Phys.}, 106(2):197--220, 2016.

\bibitem{FL21}
F.~Ferraresso and L.~Provenzano.
\newblock On the eigenvalues of the biharmonic operator with {N}eumann boundary
  conditions on a thin set.
\newblock arXiv:2108.03969 [math.SP], 2021.

\bibitem{BNRVN20}
Y.~Guedes~Bonthonneau, T.~Nguyen~Duc, N.~Raymond, and S.~V\~{u}~Ng\d{o}c.
\newblock Magnetic {WKB} constructions on surfaces.
\newblock {\em Rev. Math. Phys.}, 33(7):Paper No. 2150022, 41, 2021.

\bibitem{Helffer}
B.~Helffer.
\newblock {\em Spectral theory and its applications}.
\newblock Cambridge University Press, New York, 2013.

\bibitem{Henry}
R.~Henry.
\newblock Spectral instability for the complex {A}iry operator and even
  non-selfadjoint anharmonic oscillators.
\newblock {\em J. Spectr. Theory}, 4:349--364, 2014.

\bibitem{Henry_2014}
R.~Henry.
\newblock Spectral projections of the complex cubic oscillator.
\newblock {\em Ann. H. Poincar{\'e}}, 15:2025--2043, 2014.

\bibitem{HK17}
R.~Henry and D.~Krej\v{c}i\v{r}\'{\i}k.
\newblock Pseudospectra of the {S}chr\"{o}dinger operator with a discontinuous
  complex potential.
\newblock {\em J. Spectr. Theory}, 7(3):659--697, 2017.

\bibitem{Hitrik-Sjostrand-Viola_2013}
M.~Hitrik, J.~Sj{\"o}strand, and J.~Viola.
\newblock Resolvent estimates for elliptic quadratic differential operators.
\newblock {\em Anal. PDE}, 6:181--196, 2013.

\bibitem{Hulko18}
A.~Hulko.
\newblock On the number of eigenvalues of the biharmonic operator on
  {$\Bbb{R}^3$} perturbed by a complex potential.
\newblock {\em Rep. Math. Phys.}, 81(3):373--383, 2018.

\bibitem{IKL21}
O.~O. Ibrogimov, D.~Krej\v{c}i\v{r}\'{\i}k, and A.~Laptev.
\newblock Sharp bounds for eigenvalues of biharmonic operators with complex
  potentials in low dimensions.
\newblock {\em Math. Nachr.}, 294(7):1333--1349, 2021.

\bibitem{David19}
D.~Krej\v{c}i\v{r}\'{\i}k.
\newblock Complex magnetic fields: an improved {H}ardy-{L}aptev-{W}eidl
  inequality and quasi-self-adjointness.
\newblock {\em SIAM J. Math. Anal.}, 51(2):790--807, 2019.

\bibitem{KN20}
D.~Krej\v{c}i\v{r}{\'\i}k and T.~Nguyen~Duc.
\newblock Pseudomodes for non-self-adjoint dirac operators.
\newblock arXiv:2011.09433v2 [math-ph], 2020.

\bibitem{KS19}
D.~Krej\v{c}i\v{r}\'{\i}k and P.~Siegl.
\newblock Pseudomodes for {S}chr\"{o}dinger operators with complex potentials.
\newblock {\em J. Funct. Anal.}, 276(9):2856--2900, 2019.

\bibitem{KSTV15}
D.~Krej\v{c}i\v{r}\'{\i}k, P.~Siegl, M.~Tater, and J.~Viola.
\newblock Pseudospectra in non-{H}ermitian quantum mechanics.
\newblock {\em J. Math. Phys.}, 56(10):103513, 32, 2015.

\bibitem{LS09}
A.~Laptev and O.~Safronov.
\newblock Eigenvalue estimates for {S}chr\"{o}dinger operators with complex
  potentials.
\newblock {\em Comm. Math. Phys.}, 292(1):29--54, 2009.

\bibitem{MSV20}
B.~Mityagin, P.~Siegl, and J.~Viola.
\newblock Concentration of eigenfunctions of schroedinger operators.
\newblock arXiv:1910.10048 [math.SP], 2020.

\bibitem{Novak_2015}
R.~Nov{\'a}k.
\newblock On the pseudospectrum of the harmonic oscillator with imaginary cubic
  potential.
\newblock {\em Int. J. Theor. Phys.}, 54:4142--4153, 2015.

\bibitem{Karel04}
K.~Pravda-Starov.
\newblock A general result about the pseudo-spectrum of {S}chr\"{o}dinger
  operators.
\newblock {\em Proc. R. Soc. Lond. Ser. A Math. Phys. Eng. Sci.},
  460(2042):471--477, 2004.

\bibitem{LM05}
L.~N. Trefethen and M.~Embree.
\newblock {\em Spectra and pseudospectra}.
\newblock Princeton University Press, Princeton, NJ, 2005.
\newblock The behavior of nonnormal matrices and operators.

\end{thebibliography}
\end{document}